\documentclass{amsart}
\usepackage{amsmath}
\usepackage{amsthm}
\usepackage{amssymb}


\setlength{\oddsidemargin}{.5cm} 
\setlength{\evensidemargin}{.5cm}
\setlength{\textwidth}{15cm} 
\setlength{\textheight}{20cm}
\setlength{\topmargin}{1cm}

\newtheorem{thm}{Theorem}
\newtheorem{lem}[thm]{Lemma}
\newtheorem{cor}[thm]{Corollary}
\newtheorem{defi}[thm]{Definition}
\newtheorem{prop}[thm]{Proposition}
\newtheorem{rk}[thm]{Remark}

\newcommand{\rr}{{\mathbb{R}}}
\newcommand{\rd}{{\rr^3}}

\newcommand{\e}{\varepsilon}
\newcommand{\indiq}{\hbox{\rm 1}{\hskip -2.8 pt}\hbox{\rm I}}
\newcommand{\Id}{\mathbf{I}_3}
\newcommand{\E}{\mathbb{E}}
\newcommand{\diver}{\mathrm {div}}
\newcommand{\Tr}{\mathrm{Tr}}

\newcommand{\intot}{\int_0^t }
\newcommand{\raa}{a^{1/2}}
\newcommand{\intrd}{\int_{\rr^3}}
\newcommand{\cA}{{\mathcal A}}
\newcommand{\cM}{{\mathcal M}}
\newcommand{\cE}{{\mathcal E}}
\newcommand{\tcE}{\widetilde{\mathcal E}}
\newcommand{\cL}{{\mathcal L}}
\newcommand{\cN}{{\mathcal N}}
\newcommand{\cH}{{\mathcal H}}
\newcommand{\cP}{{\mathcal P}}
\newcommand{\cS}{{\mathcal S}}
\newcommand{\cU}{{\mathcal U}}
\newcommand{\cW}{{\mathcal W}}
\newcommand{\bL}{{\mathbf L}}
\newcommand{\tB}{\tilde B}
\newcommand{\tU}{\tilde U}
\newcommand{\tV}{\tilde V}
\newcommand{\hB}{\hat B}
\newcommand{\tbeta}{\tilde \beta}
\newcommand{\lps}{\langle \!\langle}
\newcommand{\rps}{\rangle\!\rangle}

\newcommand{\vip}{\vskip.13cm}

\newcommand{\poubelle}[1]{}

\begin{document}

\title[Kac's particle system for the Landau equation]
{From a Kac-like particle system to the Landau equation 
for hard potentials and Maxwell molecules}

\author{Nicolas Fournier}
\author{Arnaud Guillin}

\address{N. Fournier: Laboratoire de Probabilit\'es et Mod\`eles al\'eatoires, 
UMR 7599, UPMC, Case 188,
4 pl. Jussieu, F-75252 Paris Cedex 5, France.}

\email{nicolas.fournier@upmc.fr}

\address{A. Guillin : Laboratoire de Math\'ematiques, UMR 6620, Universit\'e Blaise Pascal, Av. des landais,
63177 Aubi\`ere cedex, France}

\email{arnaud.guillin@math.univ-bpclermont.fr}

\subjclass[2010]{82C40, 60K35, 65C05}

\keywords{Landau equation, Uniqueness, Stability, 
Kac's particle system, Stochastic particle systems, Propagation of Chaos.}

\begin{abstract}
We prove a quantitative result of convergence of a conservative stochastic
particle system to the solution
of the homogeneous Landau equation for hard potentials. 
There are two main difficulties: 
(i) the known stability results for this class of Landau equations 
concern {\it regular} solutions and seem difficult to extend to study the rate of convergence 
of some empirical measures; 
(ii) the conservativeness of the particle system is an obstacle for (approximate) independence.
To overcome (i), we prove a new stability result for the Landau equation for hard potentials
concerning very general measure solutions.
Due to (ii), we have to couple, our particle system with some 
{\it non independent} nonlinear processes,
of which the law solves, in some sense, the Landau equation. We then prove that these nonlinear processes
are not so far from being independent.
Using finally some ideas of Rousset \cite{r}, we show that in the case
of Maxwell molecules, the convergence of the particle system is uniform in time.
%
%
%
%
%
\end{abstract}

\maketitle

\section{Introduction and main results}

\subsection{The Landau equation}

The homogeneous Landau equation reads
\begin{align} \label{HL3D}
\partial_t f_t(v) = \frac 1 2 
\diver_v \Big( \intrd a(v-v_*)[ f_t(v_*) \nabla f_t(v) - f_t(v) \nabla f_t(v_*)
]\,dv_* \Big).
\end{align}
The unknown $f_t:\rr^3\mapsto\rr$ stands 
for the velocity-distribution in a plasma and the initial condition $f_0$ is given.
We denote by $S_3^+$ the set of symmetric nonnegative $3\times 3$ matrices.
The function $a:\rr^3\mapsto S_3^+$ is given, for some $\gamma \in [-3,1]$, by 
\[
a(v) = |v|^{2+\gamma} \Pi_{v^\perp}, \quad \hbox{where} \quad \Pi_{v^\perp}=  \Id - \frac{v \otimes v}{|v|^2}
\]
is the projection matrix onto $v^\perp$. The only physically relevant case is
$\gamma=-3$, which corresponds to a Coulomb interaction. However, the other cases are interesting
mathematically and numerically. In particular, the Landau equation can be seen as an approximation 
of the Boltzmann 
equation in the asymptotic of {\it grazing collisions}, as rigorously 
shown by Villani \cite{v:nc} for all values of
$\gamma \in [-3,1]$.
We are concerned here with Maxwell molecules ($\gamma=0$) and hard potentials ($\gamma\in (0,1]$).
The well-posedeness, regularization properties and large-time behavior of the Landau equation
have been studied in great details by Villani \cite{v:max} for Maxwell molecules and by
Desvillettes and Villani \cite{dv,dv2} for hard potentials.
We finally refer to the
long reviews paper of Villani \cite{v:h} and Alexandre \cite{a} on the 
Boltzmann and Landau models.

\subsection{Notation}

We denote by $\cP(\rr^3)$ the set of probability measures on $\rd$.
When  $f\in\cP(\rr^3)$ has a density, we also denote by $f\in L^1(\rd)$ this density.
For $q>0$, we set $\cP_q(\rd)=\{f\in\cP(\rd)\;:\;m_q(f)<\infty\}$, where $m_q(f)=\intrd |v|^q f(dv)<\infty$.
For $\alpha>0$ and $f \in \cP(\rd)$, we put $\cE_\alpha(f)=\intrd \exp(|v|^\alpha) f(dv)$.
The entropy of $f\in \cP(\rd)$ is defined by $H(f)=\intrd f(v)\log f(v) dv$ if $f$ has a density
and by $H(f)=\infty$ else.

\vip

We will use the Wasserstein distance
defined as follows.
For $f,g\in\cP_2(\rd)$, we introduce $\cH(f,g) = \bigl\{    R \in \cP(\rr^3\times\rr^3) \; : \; 
R  \text{ has marginals } f \text{ and } g \bigr\}$ and we set 
$$
\cW_2(f,g)= \inf \Big\{ \Big(\int_{\rd\times\rd} |v-w|^2 R(dv,dw)\Big)^{1/2} \; : \; R \in \cH(f,g)   \Big\}.
$$
See Villani \cite{v:t} for many details on this distance.

\vip

We also define, for $v\in \rd$,
$$
b(v)=\diver \; a(v)=-2 |v|^\gamma v \quad \hbox{and}\quad \sigma(v)=[a(v)]^{1/2}=|v|^{1+\gamma/2}\Pi_{v\perp}.
$$
For $f\in \cP(\rd)$ and $v\in \rd$, we set
\begin{gather*}
b(f,v):=\intrd b(v-v_*)f(dv_*),\quad a(f,v):=\intrd a(v-v_*)f(dv_*), \quad
\raa(f,v):=\Big[a(f,v)\Big]^{1/2}.
\end{gather*}
More generally, we will write $\varphi(f,v)=\intrd \varphi(v-v_*)f(dv_*)$ when $\varphi:\rd\mapsto\rr$.
We emphasize that $\raa(f,v)$ is $[a(f,v)]^{1/2}$ and is not
$\intrd \raa(v-v_*)f(dv_*)$.

\vip

Finally, for $A$ and $B$ two $3\times 3$ matrices, we put
$\| A \|^2 = \Tr (AA^*)$ and $\lps A,B \rps=\Tr (A B^*)$.

\subsection{Well-posedness}
We will use the following notion of weak solutions.

\begin{defi}\label{ws}
Let $\gamma \in [0,1]$. We say that $f=(f_t)_{t\geq 0}$ is a weak solution to \eqref{HL3D} if it 
belongs to $L^\infty_{loc}([0,\infty),\cP_{2+\gamma}(\rd))$ and if 
for all $\varphi\in C^2_b(\rr^3)$, all $t\geq 0$,
\begin{align}\label{wf}
\intrd \varphi(v)f_t(dv) = \intrd \varphi(v)f_0(dv) + \intot \intrd \intrd L\varphi(v,v_*) 
f_s(dv)f_s(dv_*) ds,
\end{align}
where
$$
L\varphi(v,v_*):= \frac 1 2 \sum_{k,l=1}^3 a_{kl}(v-v_*)\partial^2_{kl}\varphi(v)+ \sum_{k=1}^3
b_{k}(v-v_*)\partial_{k}\varphi(v).
$$
A weak solution $f$ is {\rm conservative} if it conserves momentum and energy, that is
$\intrd vf_t(dv)=\intrd v f_0(dv)$ and
$m_2(f_t)=m_2(f_0)$ for all $t\geq 0$.
\end{defi}

An important remark is that $|L\varphi(v,v_*)|\leq C_\varphi (1+|v|+|v_*|)^{2+\gamma}$ for $\varphi\in C^2_b(\rr^3)$
and since $f\in L^\infty_{loc}([0,\infty),\cP_{2+\gamma}(\rd))$, every term makes
sense in \eqref{wf}. 
Our first result concerns well-posedness and stability.

\begin{thm}\label{mr1}
(i) If $\gamma=0$, then for any  $f_0 \in \cP_2(\rd)$, \eqref{HL3D} has a unique weak solution 
$f=(f_t)_{t\geq 0}$ starting from $f_0$. This solution is conservative.
If moreover $H(f_0)<\infty$, then $H(f_t)\leq H(f_0)$ for all $t\geq 0$.
If $f_0 \in \cP_q(\rd)$ for some $q>2$, then $\sup_{[0,\infty)} m_q(f_t)<\infty$.
Finally, for any other weak solution $g=(g_t)_{t\geq 0}$ to \eqref{HL3D}, it holds that $\cW_2(f_t,g_t)\leq
\cW_2(f_0,g_0)$ for all $t\geq 0$.

\vip

(ii) If $\gamma \in (0,1]$, consider $f_0\in \cP_2(\rd)$ with $\cE_{\alpha}(f_0)<\infty$
for some $\alpha \in (\gamma,2)$. 
Then \eqref{HL3D} has a unique weak solution $f=(f_t)_{t\geq 0}$ starting from $f_0$. 
Moreover, this solution is conservative and $\sup_{t\geq 0} \cE_{\alpha}(f_t)<\infty$.
If $H(f_0)<\infty$, then $H(f_t)\leq H(f_0)$ for all $t\geq 0$.
Finally, for all $\eta\in(0,1)$, all $T>0$ and 
any other weak solution to $g=(g_t)_{t\geq 0}$ to \eqref{HL3D}, it holds that $\sup_{[0,T]} \cW_2(f_t,g_t)\leq
C_{\eta,T} (\cW_2(f_0,g_0))^{1-\eta}$, the constant $C_{\eta,T}$ depending only on 
$\eta,T,\gamma,\alpha$ and on (upperbounds of) $\cE_{\alpha}(f_0)$ and $\sup_{[0,T]}m_{2+\gamma}(g_t)$.
\end{thm}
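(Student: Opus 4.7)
My plan is to split the argument into a construction/a-priori-estimate part and a coupling-based Wasserstein stability part, the latter yielding both uniqueness and the quantitative estimate.

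Existence follows from constructing a nonlinear SDE whose one-dimensional marginals solve \eqref{wf}: regularise $a$ near the origin, solve the associated McKean--Vlasov equation, extract a tight limit as the regularisation vanishes, and check that the limit satisfies \eqref{wf}. Conservation of momentum and energy then follow from \eqref{wf} applied to $\varphi(v)=v_k$ and $\varphi(v)=|v|^2$, after symmetrising the integrand under $v\leftrightarrow v_*$. Polynomial moments for $\gamma=0$ come from \eqref{wf} with $\varphi(v)=|v|^q$. The critical a priori estimate in the hard-potential case is the exponential moment bound: applied to (a truncation of) $\varphi(v)=\exp(|v|^\alpha)$, the drift contribution $b(v-v_*)\cdot\nabla\varphi(v)$ behaves like $-2\alpha|v-v_*|^\gamma(v-v_*)\cdot v\,|v|^{\alpha-2}\varphi(v)$ and, because $\alpha<2$, dominates the positive Hessian contribution at leading order in $|v|$, yielding a closed differential inequality for $\cE_\alpha(f_t)$. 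Entropy monotonicity is standard from the divergence form of \eqref{HL3D}, after regularisation.

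For stability I would couple nonlinear processes Tanaka-style: $V_t\sim f_t$ and $W_t\sim g_t$, driven by a common white noise, with an auxiliary variable $(V_{*,s},W_{*,s})$ drawn at each time from the optimal $\cW_2$-coupling of $(f_s,g_s)$ and $(V_0,W_0)$ optimally coupling $(f_0,g_0)$. It\^o's formula gives
\[
\frac{d}{dt}\E|V_t-W_t|^2 = 2\E\bigl[(V_t-W_t)\cdot(b(V_t-V_{*,t})-b(W_t-W_{*,t}))\bigr] + \E\|\sigma(V_t-V_{*,t})-\sigma(W_t-W_{*,t})\|^2,
\]
and the whole point is to bound the right-hand side by $\cW_2^2(f_t,g_t)$ times a manageable prefactor. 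For Maxwell molecules ($\gamma=0$), $b$ is linear and $\sigma$ is $1$-homogeneous, so the right-hand side is quadratic in $(V_t-W_t)-(V_{*,t}-W_{*,t})$; after taking expectation against the optimal coupling the cross term cancels and one is left with a control in $\cW_2^2(f_t,g_t)$ alone, giving the non-expansivity $\cW_2(f_t,g_t)\leq\cW_2(f_0,g_0)$.

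The main obstacle is the hard-potential case, where each term on the right-hand side carries a super-quadratic weight $(1+|V_t|+|W_t|+|V_{*,t}|+|W_{*,t}|)^\gamma$ that prevents a direct Gronwall closure when $g$ is merely a measure solution. The rescue is H\"older interpolation: given $\eta\in(0,1)$, split the weighted terms into a $\cW_2^{2(1-\eta)}$ factor and a high polynomial-moment factor controlled uniformly in time through $\cE_\alpha(f_t)$ (which supplies moments of any order since $\alpha>\gamma$), and control the $g$-weights via $m_{2+\gamma}(g_t)$. Combined with a mild truncation of the kernel, this produces $\sup_{[0,T]}\cW_2(f_t,g_t)\leq C_{\eta,T}\,\cW_2^{1-\eta}(f_0,g_0)$, with the constant depending only on the stated quantities. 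Uniqueness in both (i) and (ii) follows by setting $g_0=f_0$.
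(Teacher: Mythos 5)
Your coupling is viable in principle: driving both nonlinear processes by a common noise indexed by the optimal coupling of $(f_s,g_s)$ produces exactly the key integrand
$\int\!\!\int\big(2(v-w)\cdot(b(v-y)-b(w-z))+\|\sigma(v-y)-\sigma(w-z)\|^2\big)R_s(dy,dz)\,d\pi_s(v,w)$,
which is the same quantity the paper reaches by a different device (a Brownian SDE with diffusion coefficient $[a(f_s,\cdot)]^{1/2}$, rotated by the Givens--Shortt orthogonal matrix); for $\gamma=0$ the antisymmetry/cancellation you describe indeed gives $\cW_2(f_t,g_t)\le\cW_2(f_0,g_0)$, modulo the small computation $\|\sigma(X)-\sigma(Y)\|^2\le 2|X|^2+2|Y|^2-4X\cdot Y$ (the diffusion part is \emph{not} literally quadratic in $X-Y$) and modulo identifying the law of the coupled process with $g_t$ for an \emph{arbitrary} measure solution $g$, which requires a uniqueness result for the linear martingale problem with coefficients $b(g_t,\cdot),a(g_t,\cdot)$ (the paper invokes Horowitz--Karandikar for this).

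The genuine gap is in the hard-potential closure. After the antisymmetric cancellation, the term to control is essentially $\E\big[|V_t-W_t|^2\,|V_t-Y_t|^\gamma\big]$ with $Y_t\sim f_t$, and your plan is to H\"older-interpolate it into $\cW_2^{2(1-\eta)}(f_t,g_t)$ times high polynomial moments (``$\cE_\alpha$ supplies moments of any order''). This yields a differential inequality of the type $u_t'\le C_\eta\,u_t^{1-\eta}$, which is not Osgood at $0$: integrating gives $u_t\le(u_0^{\eta}+C_\eta\eta t)^{1/\eta}$, an estimate with an additive constant independent of $u_0$, so it proves neither uniqueness (take $u_0=0$) nor the claimed bound $\sup_{[0,T]}\cW_2\le C_{\eta,T}\cW_2(f_0,g_0)^{1-\eta}$. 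Moreover, using $\cE_\alpha(f)$ only through polynomial moments is too weak: if you instead truncate the weight at a level $M$ (your ``mild truncation''), polynomial moments only give a tail of size $M^{-p}$, and optimizing $u_t'\le Mu_t+C_pM^{-p}$ cannot produce an arbitrarily small power loss. The missing mechanism is: split $|v-y|^\gamma$ at level $M$, bound the over-threshold part using the \emph{exponential} moment of $f$ alone (via Young's inequality, so that only $m_{2+\gamma}(g)$ enters, as the statement requires), obtaining $u_t'\le Mu_t+C\,e^{-\kappa M^{\alpha/\gamma}}$, and then choose $M=[\kappa^{-1}\log(1+1/u_0)]^{\gamma/\alpha}$; it is precisely the stretched exponent $\alpha/\gamma>1$ that makes $e^{MT}=\exp\big(T(\log(1+1/u_0))^{\gamma/\alpha}\big)$ a subpolynomial factor and delivers the $(1-\eta)$-rate for every $\eta$, together with uniqueness. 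Without this step the core conclusions of part (ii) are not reached.
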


Point (i) is well-known folklore, even if we found no precise reference for all the claims of the statement. 
The well-posedness, propagation of moments and entropy dissipation has been checked 
by Villani \cite{v:max} when $f_0$ has a density and the well-posedness when
$f_0 \in \cP_2(\rd)$ has been established by Gu\'erin \cite{g}. The noticeable fact that $\cW_2$
decreases along solutions was discovered by Tanaka \cite{t} 
for the Boltzmann equation for Maxwell
molecules, see also Carrapatoso \cite[Lemma 4.15]{c}.

\vip

Similarly, the existence part in point (ii) is more or less standard: the well-posedness, propagation
of moments and entropy dissipation can be found in \cite{dv} when $H(f_0)<\infty$, but
$H(f_0)<\infty$ is mainly assumed for simplicity. The propagation of exponential moments
seems to be new, but far from surprising: it is well-known (and more complicated)
for the Boltzmann equation for hard potentials,
as was discovered by Bobylev \cite{b}, see also Alonso {\it et al.} \cite{acgm}.

\vip

On the contrary, the uniqueness/stability part in point (ii) seems to be new and rather interesting.
As far as we know, the best available 
uniqueness result is the one of Desvillettes and Villani \cite[Theorem 7]{dv},
where $f_0\in\cP_2(\rd)$ is assumed to have a density satisfying 
$\intrd f_0^2(v)(1+|v|^{s})dv <\infty$ for some $s>15+5\gamma$.
Thus, we assume much less regularity, but much more localization. Furthermore, our
stability result holds {\it in the class of all weak solutions}.
Actually, a stability result 
in the class of all weak solutions (at least with finite entropy)
can also be derived using the ideas of Desvillettes and Villani,
but this would use the regularization properties of the equation which guarantee that {\it any} weak solution
is smooth.
On the contrary, we use no such regularization. This is crucial for propagation of chaos,
since then the approximate solution consists of empirical measures which, 
by nature, are not smooth. Similarly, it is very important for us that the stability result does not involve
any exponential moment of the second solution $g$, because we are not able to propagate
the exponential moments of our particle system.

\subsection{The particle system}
We now introduce an approximating particle system, in the spirit of Kac \cite{Kac1956}, who was dealing
with the Boltzmann equation.
As shown by Carrapatoso \cite{c} when $\gamma=0$, this system can be 
derived from Kac's system in the asymptotic of grazing collisions.

\vip

We fix $N\geq 2$ and consider an exchangeable $(\rd)^N$-valued random variable $(V^{i,N}_0)_{i=1,\dots,N}$,
independent of a family $(B^{ij}_t)_{1\leq i < j < N, t\geq 0}$
of i.i.d.  3D Brownian motions. For $1\leq j<i\leq N$, we set $B^{ij}_t=-B^{ji}_t$.
We also put $B^{ii}_t=0$ for all $i=1,\dots,N$ and we 
consider the system
\begin{align}\label{ps}
V^{i,N}_t\!=\!V^{i,N}_0 \!+\! \frac 1 N \sum_{j=1}^N \intot b(V^{i,N}_s-V^{j,N}_s)ds \!+\! 
\frac 1 {\sqrt N} \sum_{j=1}^N \intot \sigma(V^{i,N}_s-V^{j,N}_s)dB^{ij}_s,\quad i=1,\dots,N.
\end{align}

\begin{prop}\label{pswp}
Fix $\gamma \in [0,1]$ and $N\geq 2$.
The system \eqref{ps}
has a pathwise unique solution $(V_t^{i,N})_{i=1,\dots,N,t\geq 0}$, which is furthermore exchangeable.
The system is conservative: a.s., for all $t\geq 0$, it holds that $\sum_1^N V^{i,N}_t=\sum_1^N V^{i,N}_0$
and $\sum_1^N |V^{i,N}_t|^2=\sum_1^N |V^{i,N}_0|^2$.
\end{prop}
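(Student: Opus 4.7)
The plan is to obtain pathwise uniqueness and global existence via the usual truncation scheme combined with the two conservation laws, the latter giving the a priori bound which rules out explosion.

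First I would view \eqref{ps} as a single SDE on $(\rd)^N$ driven by the (independent) Brownian family $(B^{ij})_{1\leq i<j\leq N}$. Although $\sigma(v)=|v|^{1+\gamma/2}\Pi_{v^\perp}$ is not $C^1$ at the origin, the prefactor $|v|^{1+\gamma/2}$ vanishes with order at least $1$ and thereby compensates the discontinuity of $\Pi_{v^\perp}$, yielding a local Lipschitz estimate of the form $\|\sigma(v)-\sigma(w)\|\leq C_R|v-w|$ on $B(0,R)$ (and analogously for $b$). Classical SDE theory applied to a smooth truncation of the coefficients outside $B(0,R)$, followed by $R\to\infty$, produces a pathwise unique strong solution on $[0,\tau)$, where $\tau=\sup_R \inf\{t: \max_i |V^{i,N}_t|\geq R\}$ is the explosion time.

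Next I would derive the two conservation laws on $[0,\tau)$ via It\^o's formula. Summing \eqref{ps} over $i$ and pairing the contributions of $(i,j)$ and $(j,i)$, both the drift and the noise cancel: the drift because $b(-v)=-b(v)$, and the noise because $\sigma(-v)=\sigma(v)$ combined with $B^{ji}=-B^{ij}$. Applied to $\sum_i|V^{i,N}|^2$, It\^o's formula gives a drift and a quadratic-variation correction that cancel exactly: after symmetrization the drift contributes $\frac1N\sum_{i,j}(V^{i,N}_s-V^{j,N}_s)\cdot b(V^{i,N}_s-V^{j,N}_s)=-\frac2N\sum_{i,j}|V^{i,N}_s-V^{j,N}_s|^{\gamma+2}$, while the It\^o correction contributes $\frac1N\sum_{i,j}\Tr a(V^{i,N}_s-V^{j,N}_s)=\frac2N\sum_{i,j}|V^{i,N}_s-V^{j,N}_s|^{\gamma+2}$ (using $\Tr\Pi_{v^\perp}=2$). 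The martingale part vanishes because $v\cdot\sigma(v)\xi=0$ for every $\xi\in\rd$, as $\sigma(v)\xi\in v^\perp$. Conservation of energy then yields the a priori bound $\max_i|V^{i,N}_t|^2\leq \sum_j|V^{j,N}_0|^2$, forcing $\tau=\infty$ almost surely.

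Finally, exchangeability of $(V^{i,N}_t)_i$ follows from that of the initial data: for any permutation $\pi$ of $\{1,\dots,N\}$, the process $(V^{\pi(i),N}_t)_i$ solves \eqref{ps} with initial datum $(V^{\pi(i),N}_0)_i$ and driving family $\tilde B^{ij}:=\mathrm{sgn}(\pi(j)-\pi(i))\,B^{\pi(i)\wedge\pi(j),\pi(i)\vee\pi(j)}$, which has the same joint law as $(B^{ij})$; pathwise uniqueness then identifies the law of $(V^{\pi(i),N}_t)_i$ with that of $(V^{i,N}_t)_i$. I expect the main obstacle to be the local Lipschitz estimate on $\sigma$ near the origin, since $\Pi_{v^\perp}$ is itself discontinuous there and the compensation by $|v|^{1+\gamma/2}$ must be checked with some care; the rest of the argument is a careful but essentially algebraic computation.
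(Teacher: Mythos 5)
Your argument is correct and is essentially the paper's proof: locally Lipschitz coefficients give a pathwise unique solution up to the explosion time, the conservation laws follow from It\^o's formula, the $(i,j)/(j,i)$ pairing and the identities $x\cdot b(x)=-2|x|^{\gamma+2}$, $\Tr\, a(x)=2|x|^{\gamma+2}$, $x^*\sigma(x)=0$, and energy conservation forces $\tau=\infty$ (the paper does not even spell out the exchangeability step, which you handle correctly via permutation invariance and uniqueness). One small caveat: the martingale term $\frac{2}{\sqrt N}\sum_{i,j}\int_0^t V^{i,N}_s\cdot\sigma(V^{i,N}_s-V^{j,N}_s)\,dB^{ij}_s$ does not vanish term by term, so you must first apply the same symmetrization (using $\sigma(-x)=\sigma(x)$ and $B^{ji}=-B^{ij}$) to replace $V^{i,N}_s$ by $V^{i,N}_s-V^{j,N}_s$, and only then invoke $x\cdot\sigma(x)\xi=0$.
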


In \cite{fgm}, Fontbona, Gu\'erin and M\'el\'eard consider, when $\gamma=0$, the same system of equations,
but with a fully i.i.d. family $(B^{ij}_t)_{1\leq i , j \leq N, t\geq 0}$ of Brownian motions.
Such a system also approximates the Landau equation, but is not conservative 
(one only has $\E[\sum_1^N V^{i,N}_t]=\E[\sum_1^N V^{i,N}_0]$
and $\E[\sum_1^N |V^{i,N}_t|^2]=\E[\sum_1^N |V^{i,N}_0|^2]$) and thus physically less
relevant.

\subsection{Propagation of chaos}
The main result of the paper is the following.

\begin{thm}\label{mr2}
Fix $\gamma \in [0,1]$ and $f_0 \in \cP_2(\rd)$.
If $\gamma \in (0,1]$, assume moreover that $\cE_{\alpha}(f_0)<\infty$ for some $\alpha \in(\gamma,2)$.
Consider the unique weak solution $(f_t)_{t\geq 0}$ to \eqref{HL3D} built in Theorem \ref{mr1}.
For each $N\geq 2$, consider an exchangeable $(\rd)^N$-valued random variable $(V^{i,N}_0)_{i=1,\dots,N}$
and the corresponding unique solution $(V_t^{i,N})_{i=1,\dots,N,t\geq 0}$ to \eqref{ps}. Set
$\mu^N_t=N^{-1}\sum_1^N \delta_{V^{i,N}_t}$. Assume that for all $p\geq 2$,
$M_p:=m_p(f_0)+\sup_{N\geq 2}\E[|V^{1,N}_0|^p]<\infty$.

\vip

(i) If $\gamma=0$, then for all $\eta \in (0,1)$, there is a constant $C_\eta$ depending
only on $\eta$, on (some upperbounds of) $\{M_p,p\geq 2\}$ and on (some upperbound of) $H(f_0)$ 
when $H(f_0)<\infty$ such that 
$$
\sup_{[0,\infty)}\E[\cW_2^2(\mu^N_t,f_t)]\leq \begin{cases}
C_\eta(\E[\cW_2^2(\mu^N_0,f_0)]+N^{-1/4})^{1-\eta} & \hbox{in general,}\\
C_\eta(\E[\cW_2^2(\mu^N_0,f_0)]+ N^{-1/3})^{1-\eta} & \hbox{if } H(f_0)<\infty.
\end{cases}
$$

(ii) If $\gamma \in (0,1]$, then for all $T>0$, all $\eta \in (0,1)$, there is a constant $C_{\eta,T}$ depending
only on $\eta$, $T$, $\gamma$, $\alpha$, on (some upperbounds of) $\cE_\alpha(f_0)$ and $\{M_p,p\geq 2\}$
and on  (some upperbound of) $H(f_0)$ when  $H(f_0)<\infty$ such that 
$$
\sup_{[0,T]}\E[\cW_2^2(\mu^N_t,f_t)]\leq \begin{cases}
C_{\eta,T} (\E[\cW_2^2(\mu^N_0,f_0)]+N^{-1/4})^{1-\eta} & \hbox{in general,}\\
C_{\eta,T} (\E[\cW_2^2(\mu^N_0,f_0)]+N^{-1/3})^{1-\eta}& \hbox{if } H(f_0)<\infty.
\end{cases}
$$
\end{thm}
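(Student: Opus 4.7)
The plan is to follow a coupling strategy in the spirit of Sznitman, adapted to accommodate the antisymmetry $B^{ij}=-B^{ji}$ which forbids the straightforward construction of independent McKean--Vlasov processes. For each $N$ I would introduce an auxiliary exchangeable family $(W^{i,N}_t)_{i=1,\dots,N}$ of \emph{nonlinear} processes, each one distributed at time $t$ according to $f_t$, driven by the \emph{same} family $(B^{ij})$ as the particle system and with $W^{i,N}_0=V^{i,N}_0$. Writing $\hat\mu^N_t := N^{-1}\sum_1^N \delta_{W^{i,N}_t}$, the triangle inequality
\begin{align*}
\cW_2(\mu^N_t, f_t) \;\le\; \cW_2(\mu^N_t, \hat\mu^N_t) + \cW_2(\hat\mu^N_t, f_t)
\end{align*}
then reduces the theorem to a coupling estimate plus a (non i.i.d.) empirical-measure estimate.

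For the construction of the $W^{i,N}$, a natural ansatz is
\begin{align*}
W^{i,N}_t = V^{i,N}_0 + \int_0^t b(f_s, W^{i,N}_s)\,ds + \frac{1}{\sqrt N}\sum_{j=1}^N\int_0^t \sigma(W^{i,N}_s - \widetilde W^{i,j}_s)\,dB^{ij}_s,
\end{align*}
where, for each fixed $i$, the auxiliary $\widetilde W^{i,j}_s$ are designed so that the quadratic covariation of the martingale part equals $a(f_s,W^{i,N}_s)\,ds$ (so that $W^{i,N}_t$ does have law $f_t$ by the uniqueness in Theorem \ref{mr1}), and so that $(W^{i,N})_{i=1,\dots,N}$ is globally exchangeable. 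Solvability reduces to a fixed-point problem on the law of the system; closing it relies on the well-posedness already established in Theorem \ref{mr1}.

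To bound the first term, $\cW_2^2(\mu^N_t, \hat\mu^N_t) \le N^{-1}\sum_i |V^{i,N}_t - W^{i,N}_t|^2$, I would apply Itô's formula to $|V^{i,N}_t - W^{i,N}_t|^2$. Because the two SDEs share the $B^{ij}$, the martingale terms combine into $\|\sigma(V^{i,N}_s - V^{j,N}_s)-\sigma(W^{i,N}_s - \widetilde W^{i,j}_s)\|^2$; using the Hölder continuity of $\sigma$, exchangeability, and replacing the empirical sum over $j$ by an integral against $f_s$ at the cost of the other term $\cW_2^2(\hat\mu^N_s,f_s)$, one obtains a Gronwall-type inequality on $Z^N_t:=\E[|V^{1,N}_t-W^{1,N}_t|^2]$. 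In the Maxwell case $\gamma=0$ the coefficients $a,b$ are quadratic so the Gronwall constant is time-independent (this, with Rousset's trick, is what yields the uniform-in-time bound); for $\gamma\in(0,1]$, the non-Lipschitz character of $b$ and $\sigma$ forces the same kind of stability argument as underlies Theorem \ref{mr1}(ii), and the $(1-\eta)$ exponent comes out of that estimate exactly as there.

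For the second term, $\E[\cW_2^2(\hat\mu^N_t, f_t)]$ is an empirical-measure error for \emph{exchangeable but non-independent} samples from $f_t$. I would apply the now-standard Fournier--Guillin quantitative rate in dimension $3$, which under a $\cP_q$-moment on $f_t$ gives the rate $N^{-1/4}$ in general and improves to $N^{-1/3}$ when $f_t$ has a density with finite entropy (propagated from $H(f_0)<\infty$ by Theorem \ref{mr1}); the extension to exchangeable samples is obtained by showing that the pairwise correlation between $W^{i,N}_t$ and $W^{j,N}_t$ is of order $1/N$, so the $U$-statistic variance calculation behind the Fournier--Guillin bound still goes through up to a harmless constant. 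I expect the main obstacle to be in Step~1: building the auxiliary family $(W^{i,N})$ so that it simultaneously has the correct marginals $f_t$, is exchangeable, is driven by the antisymmetric noise $B^{ij}=-B^{ji}$, and has a tractably small pairwise dependence. Everything downstream (Itô computation, Gronwall, empirical-measure bound) is then a matter of careful but rather classical estimates.
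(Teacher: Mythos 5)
Your overall architecture (couple the particles with nonlinear Landau processes sharing the noise, then triangle inequality plus an empirical-measure bound for the nonlinear processes) matches the paper's skeleton, but the two places where you declare the work "classical" are exactly where the proof is not, and your plan as stated would not deliver the claimed rates. First, the construction of the auxiliary family: your ansatz with phantom variables $\widetilde W^{i,j}$ subject to the exact constraint $N^{-1}\sum_j a(W^{i,N}_s-\widetilde W^{i,j}_s)=a(f_s,W^{i,N}_s)$ is precisely the hard point and is left open; the paper avoids it by first condensing, for each $i$, the $N$ noises into a single Brownian motion $\beta^{i,N}$ via Lemma \ref{reps}, and then driving $W^{i,N}$ by $U_{\e_N}(a(\mu^N_s,V^{i,N}_s),a(\nu^N_s,W^{i,N}_s))\,d\beta^{i,N}_s$, i.e.\ by the Givens--Shortt optimal rotation of Lemma \ref{tictactoc}. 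Moreover, your subsequent estimate ("H\"older continuity of $\sigma$ plus Gronwall") is the synchronous-type bound that the paper explicitly discards: the central inequality (Proposition \ref{f4}) exploits the optimal Gaussian coupling and an antisymmetry cancellation to get a drift-plus-diffusion term that is $\leq 0$ for $\gamma=0$ and controllable for $\gamma\in(0,1]$ using exponential moments of $f$ only; a crude H\"older bound on $\|\sigma(\cdot)-\sigma(\cdot)\|^2$ does not close the Gronwall loop at the stated rates, and in particular cannot give the time-uniform Maxwell bound. Also, you take $W^{i,N}_0=V^{i,N}_0$, but then $W^{i,N}_t$ has law $f_t$ only if $V^{i,N}_0\sim f_0$, which is not assumed; the paper instead couples $(V^{i,N}_0)$ optimally to an i.i.d.\ $f_0^{\otimes N}$ sample, which is how $\E[\cW_2^2(\mu^N_0,f_0)]$ enters the bound.

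Second, your treatment of $\E[\cW_2^2(\hat\mu^N_t,f_t)]$ misattributes both rates. The Fournier--Guillin bound for i.i.d.\ samples in $d=3$ gives $N^{-1/2}$ under moment conditions; there is no entropy-improved version, and "pairwise correlations of order $1/N$" is not a substantiated route to recover it for the exchangeable, non-independent family $(W^{i,N}_t)$. The paper needs a genuine second coupling (Lemma \ref{lf}: for each $K$, an i.i.d.\ family $(Z^{i,N,K})$ with $\E[|W^{i,N}_t-Z^{i,N,K}_t|^2]\leq C_{\eta,T}KN^{\eta-1}$) and a block decomposition (Lemma \ref{untiers}), which yields only $N^{-(1-\eta)/3}$ — and this is where the $N^{-1/3}$ in the statement comes from. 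The dichotomy $N^{-1/4}$ versus $N^{-1/3}$ is driven by the coupling error, not the empirical-measure term: without entropy one only has the H\"older-$1/2$ bound $\|A^{1/2}-B^{1/2}\|\leq C\|A-B\|^{1/2}$ for matrix square roots, giving $N^{-1/4}$, while $H(f_0)<\infty$ activates the Desvillettes--Villani ellipticity lower bound on $a(f_t,\cdot)$ (Lemma \ref{ellip1}), upgrading this to a Lipschitz bound and hence $N^{-1/2}$ for that term, after which the non-i.i.d.\ law of large numbers at $N^{-1/3}$ becomes the bottleneck. Finally, the uniform-in-time statement for $\gamma=0$ does not follow from a time-independent Gronwall constant (the finite-horizon bound grows like $(1+t)^{5/2}$); it requires the separate Rousset-style argument of Section \ref{secmax} (uniform-in-$N$ polynomial relaxation of the particle system to $\cU(S_N)$, plus relaxation of $f_t$ to the Gaussian), which your sketch does not supply.
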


If $(V_0^{i,N})_{i=1,\dots,N}\sim f_0^{\otimes N}$, then we know from \cite[Theorem 1]{fgui}
that $\E[\cW_2^2(\mu^N_0,f_0)]\leq C N^{-1/2}$ and that $N^{-1/2}$ 
is generally the best rate we can hope for
when comparing an empirical measure of an i.i.d. sample to the common distribution.
Here we obtain a rate in $N^{-1/3}$ (or $N^{-1/4}$ without entropy), up to an arbitrarily small loss,
which is not so bad.
Let us finally mention that in point (i), the time uniformity really uses that we are in dimension $d>2$.

\subsection{References on propagation of chaos}

Showing the convergence of a toy particle system to the Boltzmann equation
was proposed by Kac \cite{Kac1956} as a step to its rigorous derivation.
He called {\it propagation of chaos} such a convergence.
Getting some {\it uniform in time} convergence is quite relevant,
since then the large time behavior of the PDE indeed describes that of the particle system.
Another important motivation is the numerical resolution of the Boltzmann equation without cutoff:
indeed, it may be relevant to replace {\it grazing} collisions by a diffusive Landau-like term.
Choosing the right threshold level requires to know quite well the rates of convergence.
See \cite{fgod} for a complete study, in this spirit, of the 1D Kac equation.

\vip

To our knowledge, the only result {\it directly} comparable to Theorem \ref{mr2} is the one
of Carrapatoso \cite[Theorem 4.2]{c} which concerns Maxwell molecules ($\gamma=0$): he obtains
(under some different conditions on $f_0$),
a uniform in time rate of convergence in (almost) $N^{-1/972}$ for another distance, strictly controlled by
$\sup_{[0,\infty)}\E[\cW_2^2(\mu^N_t,f_t)]^{1/2}$, which we can bound by (almost) $N^{-1/6}$.

\vip

Concerning the non-conservative particle system approximating the Landau equation,  
Maxwell molecules have been studied by Fontbona, Gu\'erin and M\'el\'eard \cite{fgm}
(there it is proved that $\sup_{[0,T]}\E[\cW_2^2(\mu^N_t,f_t)] \leq C_TN^{-2/7}$), see also \cite{fou1}.
Moderately soft potentials are investigated in the companion paper \cite{fh} 
($\sup_{[0,T]}\E[\cW_2^2(\mu^N_t,f_t)] \leq C_TN^{-1/2}$ when $\gamma \in (-1/4,0)$,
a less good rate when $\gamma \in (-1,-3/4]$ and a convergence without rate when $\gamma \in (-2,-1]$).
As compared to \cite{fh}, the present situation is simpler (because hard potentials are rather easier
than soft potentials) but more complicated (because we study the conservative particle system).

\vip

Sznitman \cite{s} was the first to prove
the convergence (without rate) of Kac's conservative particle system to the Boltzmann equation
for hard spheres ($\gamma=1$).
Some recent progresses have been made by Mischler and Mouhot \cite{mm} 
(from which \cite{c} is inspired) where, using an abstract and purely analytic method, 
a uniform in time quantitative convergence of Kac's particle system was derived, for the Boltzmann
equation for Maxwell molecules ($\gamma=0$, with a rate in $N^{-\e}$ for some very small $\e$)
and hard spheres ($\gamma=1$, with a rate in $(\log N)^{-\e}$ for some very small $\e$). 
Even if these rates are clearly far from being sharp, these
results are impressive. However, the method
uses some smoothness of the solution $(f_t)_{t\geq 0}$ with respect to $f_0$
(something like one or two derivatives, in some sense, required), which is closely related
to uniqueness/stability theory. Such a theory is completely understood only for Maxwell molecules
(where the kinetic cross section is constant) and hard spheres (where the angular cross section
is integrable).
Finally, let us mention the paper of Cortez and Fontbona \cite{cf}, who considered
the simplest model (the 1D Kac equation), but who obtained by coupling methods
a {\it good} rate of convergence (although probably not optimal, in $N^{-1/3}$) 
for a {\it conservative} particle system.
These authors told us that, putting together the ideas
of \cite{cf} and of \cite{fmi}, they are now treating the case of Kac's conservative system
for the $3D$ Boltzmann equation for hard potentials.

\subsection{Scheme of the proofs}\label{ideas}

Interpreting a solution $(f_t)_{t\geq 0}$ to a kinetic equation in terms 
of the time-marginals of a 3D process $(V_t)_{t\geq 0}$ solving some {\it nonlinear} Poisson
SDE was initiated by Tanaka \cite{t} for the Boltzmann equation for Maxwell molecules.
Roughly, $(V_t)_{t\geq 0}$ represents the time-evolution of the velocity of a {\it typical} particle.
A similar process was proposed by Gu\'erin \cite{g} for the Landau equation,
with a white noise-driven SDE.
Here and in \cite{fh}, we rather use a Brownian SDE.
We show that for any weak solution $(f_t)_{t\geq 0}$ and for $V_0\sim f_0$, the SDE
$V_t=V_0+\intot [b(f_s,V_s)ds+\raa(f_s,V_s)dB_s]$ is well-posed and $V_t\sim f_t$ for all $t\geq 0$.
We call  $(V_t)_{t\geq 0}$ a $(f_t)_{t\geq 0}$-Landau process.
To prove uniqueness/stability, we will consider two weak solutions $(f_t)_{t\geq 0}$ and
$(g_t)_{t\geq 0}$ and we will
couple a $(f_t)_{t\geq 0}$-Landau process and a $(g_t)_{t\geq 0}$-Landau process
in such a way that they remain as close as possible.
Using the same Brownian motion for both processes, sometimes called synchronous coupling, does not provide sufficiently good estimates.
We will use a finer coupling, based on some ideas of Givens and Shortt \cite{gs} about the optimal coupling of
(multidimensional) Gaussian random variables (for  $\cW_2$ distance).
Such a finer coupling is crucial, in particular to obtain a stability result that requires 
exponential moments of only one of the two solutions. As already mentioned,
this is important because we are not able to propagate exponential moments of the particle
system.

\vip

Similarly, we will finely couple our particle system $(V^{i,N})_{i=1,\dots,N}$ with a family $(W^{i,N})_{i=1,\dots,N}$
of $(f_t)_{t\geq 0}$-Landau processes.
The conservativeness of our particle system implies that the family $(W^{i,N}_t)_{i=1,\dots,N}$
is not independent. But we will use a second coupling to show that for 
$1<<K<<N$,  $(W^{i,N}_t)_{i=1,\dots,K}$ are approximately independent.
The idea of using two couplings is already present in the
paper by Cortez and Fontbona \cite{cf}.
\vip

The time uniformity we obtain in the case of Maxwell molecules relies on a recent noticeable argument of Rousset
\cite{r} for the Boltzmann equation.
For two solutions $(f_t)_{t\geq 0}$ and $(g_t)_{t\geq 0}$, 
Tanaka's theorem \cite{t} tells us (roughly) that $(d/dt)\cW_2(f_t,g_t)\leq 0$.
Rousset manages to prove, in dimension $d\geq 3$, something like $(d/dt)\cW_2(f_t,g_t)\leq 
-\kappa_\e \cW_2^{1+\e}(f_t,g_t)$
for all $\e>0$. This implies
that $f_t$ tends to a unique equilibrium as $t\to\infty$ at some arbitrarily fast polynomial speed.
Much better, he gets a similar result for the particle system, uniformly in $N$.
Again, extending this strategy to the Landau equation really uses
a fine coupling with suitable different Brownian motions.

\subsection{Plan of the paper}

In the next section, we quickly prove the existence part of Theorem \ref{mr1}.
In Section \ref{secreg}, we study the regularity of 
$b,a,\sigma$ and $b(f,\cdot),a(f,\cdot),\raa(f,\cdot)$.
We prove Proposition \ref{pswp} (well-posedness of the particle system)
and the well-posedness of the Landau process in Section \ref{secch}.
Section \ref{secmain} is devoted to the proof of a central inequality,
which is used a first time in Section \ref{secwp} to prove the uniqueness/stability part
of Theorem \ref{mr1}. We next show in Section \ref{secmps} that all the moments of the particle
system propagate, uniformly in $N$ and in time. This allows us to handle the proof
of Theorem \ref{mr2} (propagation of chaos) in Section \ref{secps}, 
based on a second use of our central inequality, except the time-uniformity (when $\gamma=0$) which
is verified in Section \ref{secmax}.

\section{Existence, moments and exponential moments}\label{secex}

As we will use several times in the paper, the explicit expressions of $a$ and $b$ yield to
\begin{equation}\label{sev}
\begin{cases} \Tr\; a(v-v_*) = 2|v-v_*|^{2+\gamma}, \\ a(v-v_*)v\cdot v= |v-v_*|^\gamma(|v|^2|v_*|^2-(v\cdot v_*)^2),\\ 
b(v-v_*)\cdot v=-2|v-v_*|^\gamma(|v|^2-v\cdot v_*).
\end{cases}
\end{equation}
The existence part of Theorem \ref{mr1} is, as already mentioned,
more or less well-known.

\begin{prop}\label{ex}
Let $\gamma \in [0,1]$ be fixed and let $f_0 \in \cP_{2+2\gamma}(\rd)$. 
Then there exists a conservative 
weak solution $(f_t)_{t\geq 0}$ in the sense of Definition \ref{ws} enjoying the following
properties.

\vip

(i) If $H(f_0)<\infty$, then $H(f_t)\leq H(f_0)$ for all $t\geq 0$.

\vip

(ii) If $m_q(f_0)<\infty$ for some $q>2$, then $\sup_{[0,\infty)} m_q(f_t)\leq C_q$, for some finite
constant depending only on $\gamma,q$ and on (an upperbound of) $m_q(f_0)$.

\vip

(iii) If $\gamma \in (0,1]$ and $\cE_\alpha(f_0)<\infty$ for some $\alpha\in(0,2)$, then 
$\sup_{[0,\infty)} \cE_\alpha(f_t) \leq C_{\alpha}$, for some finite constant $C_{\alpha}$
depending only on $\alpha,\gamma$ and on (an upperbound of)  $\cE_{\alpha}(f_0)$.
\end{prop}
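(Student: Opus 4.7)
The plan is to reduce to the case where existence is already known, derive the a priori bounds on a regularized problem, and pass to the limit. I would fix a smooth mollifier $\rho_\e$, set $f_0^\e=f_0*\rho_\e$ (so $f_0^\e$ is smooth with finite entropy and inherits the moment and exponential-moment bounds from $f_0$), and invoke Villani \cite{v:max} for $\gamma=0$ and Desvillettes--Villani \cite{dv} for $\gamma\in(0,1]$ to obtain a smooth conservative solution $f^\e$ starting from $f_0^\e$ and satisfying the $H$-theorem. The a priori estimates (i)--(iii) would then be proved for $f^\e$ uniformly in $\e$, and a standard tightness-plus-stability argument (each term in \eqref{wf} is continuous in $f_s$ once a uniform $(2+\gamma)$-moment bound is available) would extract a weak limit $f=(f_t)_{t\geq 0}$ which is a conservative weak solution to \eqref{HL3D} from $f_0$ inheriting all the estimates.

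For (ii), I would test \eqref{wf} against $\varphi(v)=|v|^q$ (justified through a truncation in $|v|$ using the uniform moment bound on $f^\e$). A direct computation gives
\[
L\varphi(v,v_*)=\tfrac{q}{2}|v|^{q-2}\Tr\,a(v-v_*)+\tfrac{q(q-2)}{2}|v|^{q-4}\,a(v-v_*)v\cdot v+q|v|^{q-2}\,b(v-v_*)\cdot v.
\]
Substituting \eqref{sev} and symmetrizing in $(v,v_*)$ against $f_t(dv)f_t(dv_*)$ produces a closed differential inequality of the form $(d/dt)m_q(f_t)\leq C_q m_q(f_t)m_\gamma(f_t)-c_q m_{q+\gamma}(f_t)$. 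For $\gamma\in(0,1]$, the interpolation $m_{q+\gamma}(f_t)\geq m_2(f_t)^{-\gamma/(q-2)}m_q(f_t)^{1+\gamma/(q-2)}$ combined with conservation of energy makes the dissipative term strictly super-linear in $m_q(f_t)$ and forces $\sup_{t\geq 0}m_q(f_t)\leq C_q$. For $\gamma=0$ the same bound follows from the closed Bobylev-type recursion for Maxwell moments worked out in Villani \cite{v:max}.

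The main obstacle is (iii), the propagation of $\cE_\alpha$ for $\gamma\in(0,1]$ and $\alpha\in(0,2)$. My plan is to adapt Bobylev's strategy \cite{b} from the Boltzmann setting. Expanding $\cE_\alpha(f_t)=\sum_{k\geq 0}m_{\alpha k}(f_t)/k!$, I would apply the computation from (ii) to each $m_{\alpha k}$ and exploit the dissipative term $-c_\alpha m_{\alpha k+\gamma}(f_t)$ to absorb the creation terms coming from the symmetrization of $|v-v_*|^\gamma\leq(|v|+|v_*|)^\gamma$ and the binomial expansion of $(|v|+|v_*|)^{\alpha k}$. Summing against $z^k/k!$ should give a differential inequality for the generating function $F(t,z)=\sum_{k\geq 0}m_{\alpha k}(f_t)z^k/k!$ that forces $F(t,1)=\cE_\alpha(f_t)$ to remain bounded uniformly in $t$; the key balance is that $\gamma>0$ yields a strictly super-linear dissipation (the moment of order $\alpha k+\gamma$ dominates that of order $\alpha k$) while the combinatorial factors stay summable because $\alpha<2$ leaves enough margin. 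The delicate points will be the precise book-keeping of the binomial coefficients and the truncation argument needed to legitimately use the non-$C^2_b$ test function $\exp(|v|^\alpha)$ against solutions that are only known to live in $\cP_{2+\gamma}(\rd)$.
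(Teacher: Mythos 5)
Your overall architecture (mollify $f_0$ to gain finite entropy, invoke Villani \cite{v:max} for $\gamma=0$ and Desvillettes--Villani \cite{dv} for $\gamma\in(0,1]$, prove the estimates uniformly in the regularization, and pass to the limit in the weak formulation using the uniform $2+\gamma$-moment bound) is exactly the paper's strategy, and your treatment of (i)--(ii) is fine: the paper simply quotes \cite{v:max} and \cite[Theorems 1 and 3]{dv} for these points, while you re-derive (ii) by testing with $|v|^q$; both work, and your fallback to the closed Maxwellian moment system when $\gamma=0$ is the right move since the super-linear dissipation argument indeed fails there.

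Where you genuinely diverge is point (iii), and there your plan is both heavier than necessary and not carried to completion. You propose a Bobylev-type summation $\cE_\alpha(f_t)=\sum_k m_{\alpha k}(f_t)/k!$ with a generating function; but the ``binomial expansion of $(|v|+|v_*|)^{\alpha k}$'' you invoke is a Povzner-type structure specific to the Boltzmann collision operator. For Landau, testing \eqref{wf} with $|v|^q$ only produces cross terms of order at most $2+\gamma$ in $v_*$ (this is visible from \eqref{sev}: only $\Tr\,a$, $a\,v\cdot v$ and $b\cdot v$ enter), so there is no binomial cascade at all --- and precisely because of this locality one can skip the moment-by-moment book-keeping entirely. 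The paper computes $L\varphi$ directly for the exponential weight $\varphi(v)=\exp((1+|v|^2)^{\alpha/2})$, bounds it by
$$
L\varphi(v,v_*)\leq -\alpha(1+|v|^2)^{\alpha/2-1}|v|^{2+\gamma}\varphi(v)
+C\big((1+|v|^2)^{\alpha/2}+(1+|v|^2)^{\gamma/2+\alpha-1}\big)(1+|v_*|^{2+\gamma})\varphi(v),
$$
using $|v-v_*|^\gamma\geq|v|^\gamma-|v_*|^\gamma$, plugs in the uniform bound on $m_{2+\gamma}(f_t)$ already obtained in (ii), and gets in one step $\frac{d}{dt}\tcE_\alpha(f_t)\leq-\tcE_\alpha(f_t)+K\varphi(L)$, hence a uniform bound; the constraint $\alpha<2$ enters through the Hessian term $\alpha(1+|v|^2)^{\alpha-2}v_kv_l$, which produces a growth $|v|^{2\alpha-2+\gamma}$ that must be dominated by the dissipation $|v|^{\alpha+\gamma}$. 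Your route could in principle be pushed through (the cross terms being of bounded order, the summability issue is milder than in Boltzmann), but the ``delicate book-keeping'' you defer is exactly the mathematical content of (iii), which is the new part of this proposition, so as written your proposal leaves its core step unproven. Note finally that even the paper only gives a formal proof of (iii) (it explicitly says the computation is not justified for arbitrary weak solutions and only shows that one can \emph{build} a solution with this property); your idea of running the estimate on the smooth approximations $f^\e$, where truncation of the test function can be justified, is a reasonable way to make this honest, but you would still need to state (as the paper does) that the bound is obtained for the constructed solution rather than for every weak solution.
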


\begin{proof}
If $\gamma=0$, the existence (and uniqueness) of a weak solution $(f_t)_{t\geq 0}$ to \eqref{HL3D}
has been checked by Gu\'erin \cite[Corollaries 6 and 7]{g}. Point (i) is proved
by Villani \cite[Section 8]{v:max}
as well as point (ii) (see \cite[Theorem 1]{v:max}): he assumes additionally but 
does not use that $f_0 \in L^1(\rd)$.

\vip

If $\gamma \in (0,1]$ and if $f_0 \in \cP_{2+\gamma}(\rd)$ with $H(f_0)<\infty$, then we
know from Desvillettes and Villani \cite[Theorems 1 and 3]{dv} that \eqref{HL3D}
has a weak solution $(f_t)_{t\geq 0}$ satisfying points (i) and (ii).
If we only know that $f_0 \in \cP_{2+2\gamma}(\rd)$, we introduce $f_0^n=f_0\star G_n$, with
$G_n(v)=(n/2\pi)^{3/2}\exp(-n|x|^2/2)$. Then $H(f_0^n)<\infty$ and we consider a
corresponding weak solution $(f^n_t)_{t\geq 0}$, satisfying points (i) and (ii). In particular, we have
$\sup_{n\geq 1} \sup_{[0,\infty)} m_{2+2\gamma}(f^n_t) <\infty$. We thus infer from
\eqref{wf} that for all $\varphi \in C^2_b(\rd)$, 
$\sup_{n\geq 1} \sup_{[0,\infty)} |(d/dt)\intrd \varphi(v)f^n_t(dv)|<\infty$: the family
$\{(f^n_t)_{t\geq 0},n\geq 1\} \subset C([0,\infty),\cP(\rd))$ is equicontinuous
(with $\cP(\rd)$ endowed with the topology of weak convergence). We thus can find
$(f_t)_{t\geq 0} \in C([0,\infty),\cP(\rd))$ so that, up to extraction of a subsequence,
$\lim_n \sup_{[0,T]} |\intrd \varphi(v)(f^n_t-f_t)(dv)|=0$
for all $\varphi \in C_b(\rd)$ and all $T>0$. This function $(f_t)_{t\geq 0}$ 
also satisfies point (ii), because point (ii) is satisfied by  $(f^n_t)_{t\geq 0}$ uniformly in $n$. 
Thus $(f_t)_{t\geq 0} \in L^\infty([0,\infty),\cP_{2+2\gamma}(\rd))$.
Finally, it is not difficult to pass to the limit,
for each $\varphi \in C^2_b(\rd)$, each $t\geq 0$,
in the equation $\intrd \varphi(v)f_t^n(dv)=\intrd \varphi(v)f_0^n(dv) +\intot \intrd\intrd L\varphi(v,v_*)
f^n_s(dv)f^n_s(dv_*)$, to deduce that  $(f_t)_{t\geq 0}$ is a weak solution to \eqref{HL3D}:
the only difficulty is that $L\varphi$ is not bounded, but this problem is fixed using that 
$|L\varphi(v,v_*)|\leq C_\varphi(1+|v|+|v_*|)^{2+\gamma}$ and that
$\sup_{n\geq 1} \sup_{[0,\infty)} m_{2+2\gamma}(f_t+f^n_t) <\infty$.

\vip

We now assume that $\gamma \in (0,1]$, we fix $\alpha \in (0,2)$, and we 
give a formal proof of point (iii) without justifying the computations:
this probably does not prove that {\it every} weak solution propagates exponential moments,
but certainly shows that it is possible to build such weak solutions.
We consider $\varphi(v)=\exp((1+|v|^2)^{\alpha/2})$, we set $\tcE_\alpha(f)=\intrd \varphi(v)f(dv)$
and we observe that $\cE_\alpha(f)\leq \tcE_\alpha(f) \leq e \cE_\alpha(f)$.
It holds that $\partial_k \varphi(v)=\alpha v_k (1+|v|^2)^{\alpha/2-1}\varphi(v)$ and
$\partial_{kl} \varphi(v)=\alpha[(1+|v|^2)^{\alpha/2-1}\indiq_{\{k=l\}}+(\alpha-2)v_kv_l(1+|v|^2)^{\alpha/2-2}
+ \alpha v_kv_l(1+|v|^2)^{\alpha-2}]\varphi(v)$, whence
\begin{align*}
L\varphi(v,v_*)=&\frac \alpha 2\Big[  2(1+|v|^2)^{\alpha/2-1} v \cdot b(v-v_*)  
+(1+|v|^2)^{\alpha/2-1} \Tr\; a(v-v_*) \\
& \hskip3cm + \big((\alpha-2)(1+|v|^2)^{\alpha/2-2}+\alpha (1+|v|^2)^{\alpha-2}\big) a(v-v_*)v\cdot v
\Big]\varphi(v).
\end{align*}
Recalling \eqref{sev}, we find
\begin{align*}
L\varphi(v,v_*)=&\frac \alpha 2 |v-v_*|^\gamma (1+|v|^2)^{\alpha/2-2}\Big[-2(1+|v|^2)|v|^2+  2(1+|v|^2)|v_*|^2\\
& \hskip3cm+ \big((\alpha-2)+\alpha (1+|v|^2)^{\alpha/2}\big)\big(|v|^2|v_*|^2-(v\cdot v_*)^2\big)\Big]\varphi(v).
\end{align*}
Using that $|v-v_*|^\gamma \geq |v|^\gamma - |v_*|^\gamma$
and that $|v-v_*|^\gamma\leq |v|^\gamma+|v_*|^\gamma$, we deduce that
\begin{align*}
L\varphi(v,v_*) \leq&  -\alpha \Big[ (1+|v|^2)^{\alpha/2-1}|v|^{2+\gamma} 
-(1+|v|^2)^{\alpha/2-1}|v|^2 |v_*|^\gamma\Big]\varphi(v)\\
&+\frac\alpha 2(|v|^\gamma+|v_*|^\gamma)(1+|v|^2)^{\alpha/2-2}\Big[ 2(1+|v|^2)|v_*|^2\\
&\hskip3cm+ \big((\alpha-2)+\alpha (1+|v|^2)^{\alpha/2}\big)\big(|v|^2|v_*|^2-(v\cdot v_*)^2\big)\Big]\varphi(v)\\
\leq & -\alpha (1+|v|^2)^{\alpha/2-1}|v|^{2+\gamma}\varphi(v)+ C\big((1+|v|^2)^{\alpha/2}+(1+|v|^2)^{\gamma/2+\alpha-1}\big)
(1+|v_*|^{2+\gamma})\varphi(v)
\end{align*}
for some constant $C$ depending only on $\gamma,\alpha$. By the weak formulation of \eqref{HL3D}, we get
\begin{align*}
\frac{d}{dt}\tcE_\alpha(f_t) \leq&
\intrd \Big[- \alpha (1+|v|^2)^{\alpha/2-1}|v|^{2+\gamma} \\
&\hskip2cm+ C\big((1+|v|^2)^{\alpha/2}+(1+|v|^2)^{\gamma/2+\alpha-1}\big)(1+m_{2+\gamma}(f_t))\Big] \varphi(v) f_t(dv).
\end{align*}
But we know from point (ii) that $\sup_{[0,\infty)}m_{2+\gamma}(f_t)$ is bounded by some constant depending
only on $\gamma$ and $m_{2+\gamma}(f_0)$ (which is itself controlled by $\cE_\alpha(f_0)$). We end with
\begin{align*}
\frac{d}{dt}\tcE_\alpha(f_t) \leq 
\intrd \Big[- \alpha (1+|v|^2)^{\alpha/2-1}|v|^{2+\gamma}+
C(1+|v|^2)^{\alpha/2}+C(1+|v|^2)^{\gamma/2+\alpha-1}\Big]\varphi(v) f_t(dv).
\end{align*}
For large values of $|v|$, we have $(1+|v|^2)^{\alpha/2-1}|v|^{2+\gamma}\simeq |v|^{\alpha+\gamma}$
and $(1+|v|^2)^{\alpha/2} + (1+|v|^2)^{\gamma/2+\alpha-1}\simeq |v|^{\max\{\alpha,\gamma+2\alpha-2 \}}$.
But $\alpha+\gamma>\alpha$ (because $\gamma>0$) and $\alpha+\gamma> \gamma+2\alpha-2$
(because $\alpha<2$), so that we can find some constants $K,L\geq 0$ so that for all $v\in \rd$,
$$
- \alpha (1+|v|^2)^{\alpha/2-1}|v|^{2+\gamma} + C(1+|v|^2)^{\alpha/2} +C(1+|v|^2)^{\gamma/2+\alpha-1}
\leq - 1 + K\indiq_{\{|v|\leq L\}}.
$$
Consequently,
$$ 
\frac{d}{dt}\tcE_\alpha(f_t) \leq - \tcE_\alpha(f_t) +K \intrd \indiq_{\{|v|\leq L\}} \varphi(v)f_t(dv)
\leq - \tcE_\alpha(f_t) +K\varphi(L).
$$
We classically deduce that $\sup_{[0,\infty)} \tcE_\alpha(f_t) \leq \max\{ \tcE_\alpha(f_0),K\varphi(L)\}$
as desired.
\end{proof}

\section{Regularity estimates}\label{secreg}

The following estimates can be found in \cite[Lemma 11]{fou1} (with $C=1$, but with another norm). Let $S_3^+$ be the set of symmetric nonnegative $3\times 3$-matrices with real entries.

\begin{lem}\label{math}
There is a constant $C$ such that for any $A,B \in S_3^+$, 
$$
\|A^{1/2}-B^{1/2}\|\leq C\|A-B\|^{1/2} \quad \text{and} \quad 
\|A^{1/2}-B^{1/2}\|\leq C(\|A^{-1}\|\land\|B^{-1}\|)^{1/2}\|A-B\|.
$$
\end{lem}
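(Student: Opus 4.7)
The plan is to base both inequalities on the classical integral representation of the square root,
$$M^{1/2}=\frac{1}{\pi}\int_0^\infty t^{-1/2}\,M(t\Id+M)^{-1}\,dt,$$
valid for $M\in S_3^+$ (with the natural convention when $M$ is non-invertible). Combining $M(t\Id+M)^{-1}=\Id-t(t\Id+M)^{-1}$ with the resolvent identity $(t\Id+A)^{-1}-(t\Id+B)^{-1}=(t\Id+A)^{-1}(B-A)(t\Id+B)^{-1}$ yields the key formula
$$A^{1/2}-B^{1/2}=\frac{1}{\pi}\int_0^\infty t^{1/2}\,(t\Id+A)^{-1}(A-B)(t\Id+B)^{-1}\,dt.$$
Both estimates then reduce to bounding this integrand in Frobenius norm, via the standard inequality $\|XYZ\|\leq\|X\|_{op}\|Y\|\|Z\|_{op}$ together with the spectral estimate $\|(t\Id+M)^{-1}\|_{op}\leq 1/(t+\lambda_{\min}(M))$.

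For the second (Lipschitz) inequality I would use $\|(t\Id+A)^{-1}\|_{op}\leq 1/(t+\lambda_{\min}(A))$ and the crude bound $\|(t\Id+B)^{-1}\|_{op}\leq 1/t$, which dominates the integrand by $t^{-1/2}(t+\lambda_{\min}(A))^{-1}\|A-B\|$. The elementary integral $\int_0^\infty t^{-1/2}(t+\alpha)^{-1}dt=\pi/\sqrt{\alpha}$ then produces $\|A^{1/2}-B^{1/2}\|\leq \|A-B\|/\sqrt{\lambda_{\min}(A)}$. Since $1/\lambda_{\min}(A)=\|A^{-1}\|_{op}$ and operator and Frobenius norms are equivalent in dimension three (losing only a dimensional constant), and since the roles of $A$ and $B$ are symmetric, this gives the claimed bound with the $\wedge$.

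For the first (H\"older) inequality I would use the same representation but split at a level $\tau>0$ to be chosen. On $\{t>\tau\}$ the pointwise bounds $\|(t\Id+A)^{-1}\|_{op},\|(t\Id+B)^{-1}\|_{op}\leq 1/t$ dominate the integrand by $t^{-3/2}\|A-B\|$, contributing $O(\tau^{-1/2}\|A-B\|)$. On $\{t<\tau\}$ I would instead use the algebraic identity $(t\Id+A)^{-1}(A-B)(t\Id+B)^{-1}=(t\Id+A)^{-1}-(t\Id+B)^{-1}$ (i.e.\ the resolvent identity read backwards) and the trivial estimate $\|(t\Id+M)^{-1}\|\leq\sqrt{3}/t$ to bound the integrand by $Ct^{-1/2}$, contributing $O(\sqrt{\tau})$. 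Optimizing $\tau\sim\|A-B\|$ delivers $\|A^{1/2}-B^{1/2}\|\leq C\|A-B\|^{1/2}$.

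The main obstacle is precisely the possible near-singularity of $A$ and $B$: a naive application of the integral representation leads to a non-integrable singularity at $t=0$. What saves the argument is the existence of \emph{two} distinct algebraic representations of the integrand, one of size $t^{-2}\|A-B\|$ and one of size $t^{-1}$, whose interpolation is what produces the H\"older exponent $1/2$ in the first inequality and, when combined with a spectral lower bound on $A$ or $B$, the Lipschitz control in the second.
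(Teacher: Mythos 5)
Your argument is correct, and it is worth noting that the paper itself does not prove this lemma: it only cites Fournier, \emph{Kinet.\ Relat.\ Models} 2009, Lemma 11 (where the estimates are obtained with $C=1$ for the operator norm), so your proof is a genuinely self-contained alternative rather than a reproduction. The integral representation $M^{1/2}=\pi^{-1}\int_0^\infty t^{-1/2}M(t\Id+M)^{-1}\,dt$ is valid on all of $S_3^+$ with no convention needed (the integrand is bounded by $\sqrt3\,t^{-1/2}$ near $0$ and by $Ct^{-3/2}$ at infinity), your key formula for $A^{1/2}-B^{1/2}$ follows as you say, the bound $\|(t\Id+M)^{-1}\|_{op}\le (t+\lambda_{\min}(M))^{-1}$ together with $\int_0^\infty t^{-1/2}(t+\alpha)^{-1}dt=\pi/\sqrt\alpha$ gives the Lipschitz estimate with $\lambda_{\min}(A)^{-1/2}=\|A^{-1}\|_{op}^{1/2}\le\|A^{-1}\|^{1/2}$ (and the $\wedge$ by symmetry), and the splitting at $\tau\sim\|A-B\|$, using the two competing representations of the integrand (size $t^{-2}\|A-B\|$ versus size $t^{-1}$), correctly produces the exponent $1/2$; the passage between operator and Frobenius norms only costs dimensional constants, which the statement allows. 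The more classical route, which is what proofs of this type of statement usually follow, is to get the first inequality from the operator-monotonicity bound $\|A^{1/2}-B^{1/2}\|_{op}\le\|A-B\|_{op}^{1/2}$ and the second from the Sylvester identity $A-B=A^{1/2}(A^{1/2}-B^{1/2})+(A^{1/2}-B^{1/2})B^{1/2}$, which yields $\|A^{1/2}-B^{1/2}\|\le\|A-B\|/(\lambda_{\min}(A)^{1/2}+\lambda_{\min}(B)^{1/2})$; your single-representation interpolation argument buys both inequalities from one formula at the price of non-optimal constants, which is all the lemma asks. One trivial slip: the identity on $\{t<\tau\}$ should read $(t\Id+A)^{-1}(A-B)(t\Id+B)^{-1}=(t\Id+B)^{-1}-(t\Id+A)^{-1}$ (your sign is reversed), but since you only use the triangle inequality on it, nothing changes.
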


We will sometimes need the ellipticity estimate of 
Desvillettes and Villani \cite[Proposition 4]{dv}.

\begin{lem}\label{ellip1}
Let $\gamma \in [0,1]$. For all $A>0$, there is $C_A$ depending only on $A$ and $\gamma$ 
such that for all $f\in\cP_2(\rr^3)$ satisfying 
$H(f)\leq A$ and $m_2(f)\leq A$, for all $v\in\rd$,
$\|[a(f,v)]^{-1}\| \leq C_A (1+|v|)^{-\gamma}$.
\end{lem}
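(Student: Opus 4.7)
The plan is to bound the smallest eigenvalue of the symmetric nonnegative matrix $a(f,v)$ from below by $C_A^{-1}(1+|v|)^\gamma$, uniformly over all $f$ satisfying the hypotheses. For any unit vector $\xi\in\rd$, the identity $a(v-v_*)=|v-v_*|^{2+\gamma}\Pi_{(v-v_*)^\perp}$ yields the scalar expression
\[
\xi\cdot a(f,v)\xi=\intrd |v-v_*|^\gamma\,\bigl|(v-v_*)_{\xi^\perp}\bigr|^2 f(dv_*),
\]
where $(v-v_*)_{\xi^\perp}:=(v-v_*)-((v-v_*)\cdot\xi)\xi$ denotes the component of $v-v_*$ orthogonal to $\xi$. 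It thus suffices to lower bound this scalar integral uniformly in $\xi\in S^2$.

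The key technical input, and the step I expect to be the most delicate, is the classical entropy-based absolute-continuity estimate: from the bound $\int f\log^+\! f\le H(f)+C(1+m_2(f))$ (standard Gaussian comparison) combined with the splitting $\int_E f\le M|E|+(\log M)^{-1}\int f\log^+\! f$ and optimisation over $M>1$, I would obtain
\[
f(E)\le \Psi_A(|E|) \qquad \text{for every measurable } E\subset\rd \text{ with }|E|<\infty,
\]
where $\Psi_A$ depends only on $A$ and $\Psi_A(\delta)\to 0$ as $\delta\to 0$. Everything else reduces to geometry once this is in hand.

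Given $\Psi_A$, I would localise the integral by introducing the open tubular neighborhood $E_{v,\xi,r}=\{v_*\in\rd:|(v-v_*)_{\xi^\perp}|<r\}$ of the line $v+\rr\xi$. Its intersection with $B(0,R)$ has Lebesgue measure bounded by $CRr^2$, so the entropy estimate together with Chebyshev applied to $f(B(0,R)^c)\le A/R^2$ gives
\[
f(G)\ge 1-\frac{A}{R^2}-\Psi_A(CRr^2), \qquad G:=B(0,R)\setminus E_{v,\xi,r}.
\]
Choosing first $R$ large and then $r$ small, both depending only on $A$ and $\gamma$, forces $f(G)\ge 1/2$. On $G$ one has by construction $|(v-v_*)_{\xi^\perp}|\ge r$, and a fortiori $|v-v_*|\ge r$.

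To obtain the correct weight $(1+|v|)^\gamma$ I split on $|v|$. When $|v|\ge 2R$, every $v_*\in B(0,R)\supset G$ satisfies $|v-v_*|\ge |v|/2\ge c(1+|v|)$, so the integrand on $G$ is at least $c^\gamma r^2(1+|v|)^\gamma$, yielding $\xi\cdot a(f,v)\xi\ge \tfrac12 c^\gamma r^2(1+|v|)^\gamma$. When $|v|\le 2R$, the weaker bound $|v-v_*|\ge r$ already gives an integrand at least $r^{2+\gamma}$ on $G$, hence $\xi\cdot a(f,v)\xi\ge r^{2+\gamma}/2\ge [r^{2+\gamma}/(2(1+2R)^\gamma)](1+|v|)^\gamma$. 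Taking the smaller of the two constants and noting that $\xi\in S^2$ was arbitrary produces the desired uniform lower bound on the eigenvalues of $a(f,v)$, equivalently $\|[a(f,v)]^{-1}\|\le C_A(1+|v|)^{-\gamma}$.
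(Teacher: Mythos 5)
Your argument is correct, but note that the paper does not prove this lemma at all: it is quoted directly from Desvillettes and Villani \cite[Proposition 4]{dv}, so there is no in-paper proof to compare against. What you have written is essentially a self-contained reconstruction of the classical Desvillettes--Villani argument: the identity $\xi\cdot a(f,v)\xi=\intrd |v-v_*|^\gamma|(v-v_*)_{\xi^\perp}|^2f(dv_*)$, the non-concentration estimate $f(E)\leq \Psi_A(|E|)$ coming from $\int f\log^+ f\leq H(f)+m_2(f)+C$ and the splitting at level $M$, the removal of a cylinder of radius $r$ around the line $v+\rr\xi$ together with the complement of $B(0,R)$ via Chebyshev, and finally the dichotomy $|v|\geq 2R$ versus $|v|\leq 2R$ to extract the weight $(1+|v|)^\gamma$. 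All the individual steps check out: the cylinder-ball intersection indeed has measure at most $CRr^2$, the choices of $R$ then $r$ depend only on $A$ (and $\gamma$), and the resulting lower bound $\lambda_{\min}(a(f,v))\geq \kappa_A(1+|v|)^\gamma$ gives $\|[a(f,v)]^{-1}\|\leq \sqrt 3\,\kappa_A^{-1}(1+|v|)^{-\gamma}$ for the Frobenius norm used in the paper, which is the stated estimate. So your proposal supplies a correct proof of the cited ellipticity estimate rather than an alternative to anything the authors do themselves.
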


We next observe that the coefficients $a$, $b$ and $\sigma$ are locally Lipschitz continuous.

\begin{lem}\label{f1}
Fix $\gamma \in [0,1]$. There is $C$ depending only on $\gamma$ such that for all $v,w \in \rd$, 
\begin{gather*}
|b(v)-b(w)|\leq C |v-w|(|v|^\gamma+|w|^\gamma), \qquad 
\|\sigma(v)-\sigma(w)\| \leq C |v-w|(|v|^{\gamma/2}+|w|^{\gamma/2}), \\
\hbox{and}\quad \|a(v)-a(w)\| \leq C |v-w|(|v|^{1+\gamma}+|w|^{1+\gamma}).
\end{gather*}
\end{lem}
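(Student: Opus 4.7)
The plan is to treat all three inequalities by a common mean-value argument, exploiting that $b$, $\sigma$ and $a$ are all positively homogeneous and smooth away from the origin. Concretely, I will show that if $\phi:\rd\to E$ (for $E=\rd$ or the space of $3\times 3$ matrices) is continuous on $\rd$, of class $C^1$ on $\rd\setminus\{0\}$, and satisfies $\|\nabla\phi(v)\|\leq C_\gamma|v|^\beta$ for some $\beta\geq 0$, then
\[
\|\phi(v)-\phi(w)\|\leq C_\gamma|v-w|(|v|^\beta+|w|^\beta)\quad \text{for all } v,w\in\rd,
\]
and then apply this to $\phi=b,\sigma,a$ with $\beta=\gamma,\gamma/2,1+\gamma$ respectively.

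First, I would establish the pointwise gradient bounds. Writing $b(v)=-2|v|^\gamma v$, a direct differentiation gives $\nabla b(v)=-2|v|^\gamma\Id-2\gamma|v|^{\gamma-2}v\otimes v$, hence $\|\nabla b(v)\|\leq C_\gamma|v|^\gamma$ for $v\neq 0$. Using $\Pi_{v^\perp}=\Id-v\otimes v/|v|^2$, one rewrites
\[
a(v)=|v|^{2+\gamma}\Id-|v|^\gamma v\otimes v,\qquad \sigma(v)=|v|^{1+\gamma/2}\Id-|v|^{\gamma/2-1}v\otimes v,
\]
and elementary computations (each monomial produced by the product/chain rule being controlled by the desired power of $|v|$) yield $\|\nabla a(v)\|\leq C_\gamma|v|^{1+\gamma}$ and $\|\nabla\sigma(v)\|\leq C_\gamma|v|^{\gamma/2}$ on $\rd\setminus\{0\}$.

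Second, I would apply the mean-value inequality. Fix $v,w\in\rd$ and set $v(t)=(1-t)w+tv$. In the generic case where the segment $\{v(t):t\in[0,1]\}$ avoids the origin, the fundamental theorem of calculus gives $\phi(v)-\phi(w)=\int_0^1 \nabla\phi(v(t))(v-w)\,dt$, and since $|v(t)|\leq\max(|v|,|w|)$, one has $|v(t)|^\beta\leq |v|^\beta+|w|^\beta$ for $\beta\geq 0$, whence
\[
\|\phi(v)-\phi(w)\|\leq C_\gamma|v-w|(|v|^\beta+|w|^\beta).
\]

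Finally, two degenerate configurations need attention. If $w=0$ (or $v=0$), then $\phi(w)=0$ by positive homogeneity of degree $>0$, and the bound $\|\phi(v)\|\leq C_\gamma|v|^{1+\beta}$ (again by homogeneity) gives exactly the desired estimate. If the segment crosses the origin, i.e.\ $v=-\lambda w$ with $\lambda>0$, then $|v-w|=(1+\lambda)|w|=|v|+|w|$, so the triangle inequality alone yields
\[
\|\phi(v)-\phi(w)\|\leq \|\phi(v)\|+\|\phi(w)\|\leq C_\gamma(|v|^{1+\beta}+|w|^{1+\beta})\leq C_\gamma(|v|+|w|)(|v|^\beta+|w|^\beta),
\]
which is again the required inequality. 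No real obstacle is expected: these are routine computations, and the only subtle point, handled above, is the non-smoothness of the coefficients at the origin, which is neutralized by homogeneity.
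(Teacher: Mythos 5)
Your argument is correct and is essentially the paper's own proof: the paper simply states the derivative bounds $|Db(v)|\leq C|v|^\gamma$, $|D\sigma(v)|\leq C|v|^{\gamma/2}$, $|Da(v)|\leq C|v|^{1+\gamma}$ and says ``from which the results follow,'' which is exactly your mean-value argument. Your explicit treatment of the degenerate cases (endpoint or segment at the origin, handled by homogeneity) just fills in details the paper leaves implicit.
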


\begin{proof}
Since $b(v)=-2|v|^\gamma v$, since $\sigma(v)=|v|^{\gamma/2+1}\Pi_{v^\perp}
=|v|^{\gamma/2+1}(\Id-|v|^{-2} v\otimes v)$ and since $a(v)=|v|^{2+\gamma}(\Id-|v|^{-2} v\otimes v)$,
one easily checks that $|D b(v)|\leq C |v|^\gamma$,
that $|D \sigma(v)|\leq C |v|^{\gamma/2}$ and that $|D a(v)|\leq C |v|^{1+\gamma}$, from which the results follow.
\end{proof}

Our main results are based on the use of a SDE of which we now study roughly the coefficients.

\begin{lem}\label{f2}
Fix $\gamma \in [0,1]$.  There is $C$ depending only on $\gamma$ such that 
for every $f \in \cP_{2+\gamma}(\rd)$ and every $v,w \in \rd$,

\vip

(i) $|b(f,v)| \leq C(|v|^{1+\gamma}+m_{1+\gamma}(f))$,

\vip
(ii) $|b(f,v) - b(f,w)| \leq C |v-w| (|v|^\gamma+|w|^\gamma + m_{\gamma}(f))$,
\vip
(iii) $\|a(f,v)\| \leq C (   |v|^{2+\gamma} +m_{2+\gamma}(f)  )$,
\vip
(iv) $\|a(f,v) - a(f,w)\| \leq C |v-w|(|v|^{1+\gamma}+|w|^{1+\gamma} + m_{1+\gamma}(f))$,
\vip
(v) $\|\raa(f,v)\|^2 \leq C (   |v|^{2+\gamma} +m_{2+\gamma}(f))$,
\vip
(vi) $\|\raa(f,v) - \raa(f,w)\|^2 \leq C |v-w|^2 (1+m_{2+\gamma}(f))(1+|v|^2+|w|^2)$.
\end{lem}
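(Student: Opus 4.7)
Plan. For (i)--(v), a single template works: bound the pointwise integrand $\varphi(v-v_*)$ or the increment $\varphi(v-v_*)-\varphi(w-v_*)$ using either the explicit formulas or Lemma \ref{f1}, then integrate against $f$ invoking the elementary inequality $|v-v_*|^\alpha\leq C_\alpha(|v|^\alpha+|v_*|^\alpha)$ for $\alpha\geq 0$ to split the $v$-dependence from the moment $m_\alpha(f)$. Concretely, (i) comes from $|b(v-v_*)|=2|v-v_*|^{1+\gamma}$ and (iii) from $\|a(v-v_*)\|\leq\sqrt{2}\,|v-v_*|^{2+\gamma}$; items (ii) and (iv) apply the Lipschitz estimates of Lemma \ref{f1} to $b(v-v_*)-b(w-v_*)$ and $a(v-v_*)-a(w-v_*)$; and (v) uses the identity $\|\raa(f,v)\|^2=\Tr\,a(f,v)=2\intrd|v-v_*|^{2+\gamma}f(dv_*)$ (valid since $\raa(f,v)$ is symmetric and PSD) to reduce the bound to the same argument as in (iii).

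The main obstacle is (vi), whose $|v-w|^2$ factor is genuinely stronger than what Lemma \ref{math} alone delivers. Starting from Lemma \ref{math} we have $\|\raa(f,v)-\raa(f,w)\|^2\leq C\|a(f,v)-a(f,w)\|$, and combining with (iv), together with $|v|^{1+\gamma}\leq C(1+|v|^2)$ (using $\gamma\leq 1$) and $m_{1+\gamma}(f)\leq 1+m_{2+\gamma}(f)$, yields
\[
\|\raa(f,v)-\raa(f,w)\|^2\leq C|v-w|(1+|v|^2+|w|^2)(1+m_{2+\gamma}(f)).
\]
This already settles the regime $|v-w|\geq 1$, where $|v-w|\leq|v-w|^2$.

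The delicate regime is $|v-w|<1$, where this H\"older-$1/2$ estimate is strictly weaker than the claimed Lipschitz-type bound. To upgrade it, I would exploit the Lyapunov-type identity
\[
(\raa(f,v)-\raa(f,w))(\raa(f,v)+\raa(f,w))+(\raa(f,v)+\raa(f,w))(\raa(f,v)-\raa(f,w))=2(a(f,v)-a(f,w)),
\]
together with the observation that $\raa(f,v)-\raa(f,w)$ automatically vanishes on the common null space of $\raa(f,v)$ and $\raa(f,w)$ (since each summand does), so the analysis reduces to the orthogonal complement on which $\raa(f,v)+\raa(f,w)$ is strictly positive. The hard part is that the smallest positive eigenvalue of $\raa(f,v)+\raa(f,w)$ can be small; one must quantify the cancellation between this eigenvalue and the norm (and direction) of $a(f,v)-a(f,w)$, using the specific projection-type structure $a(x)=|x|^{2+\gamma}\Pi_{x^\perp}$, to extract the final $|v-w|^2(1+|v|^2+|w|^2)(1+m_{2+\gamma}(f))$ bound.
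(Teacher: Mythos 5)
Points (i)--(v) of your proposal are fine and essentially coincide with the paper's argument. The problem is (vi), and there you have a genuine gap: you correctly identify that the only hard regime is $|v-w|<1$ with possibly degenerate matrices, but your plan for that regime is not a proof. The Sylvester/Lyapunov identity $(\raa(f,v)-\raa(f,w))S+S(\raa(f,v)-\raa(f,w))=2(a(f,v)-a(f,w))$ with $S=\raa(f,v)+\raa(f,w)$ only yields $\|\raa(f,v)-\raa(f,w)\|\leq C\lambda_{\min}^{-1}\|a(f,v)-a(f,w)\|$ on the subspace where $S\geq\lambda_{\min}>0$, and the whole content of the lemma lies precisely where this fails: the dangerous directions are not the common null space (on which, as you say, everything vanishes trivially) but the directions in which both $a(f,v)$ and $a(f,w)$ are small and nonzero, with nearly but not exactly aligned near-kernels --- e.g. $f$ concentrated near a single point $v_0$, where $a(f,v)\approx|v-v_0|^{2+\gamma}\Pi_{(v-v_0)^\perp}$ is nearly rank $2$. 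There the Sylvester operator is ill-conditioned and the required cancellation between $\lambda_{\min}(S)$ and the component of $a(f,v)-a(f,w)$ along those directions is exactly what must be proved; your proposal names this as ``the hard part'' and stops, so the estimate in the regime $|v-w|<1$ is not established.

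For comparison, the paper avoids any spectral lower bound by using the Stroock--Varadhan estimate $\|D(A^{1/2})\|_\infty\leq C\|D^2A\|_\infty^{1/2}$, valid for any map $A:\rd\to S_3^+$, applied to the weighted field $A(v)=(1+|v|^2)^{-\gamma/2}a(f,v)$. Since $\|a(z)\|\leq C|z|^{2+\gamma}$, $\|Da(z)\|\leq C|z|^{1+\gamma}$ and $\|D^2a(z)\|\leq C|z|^{\gamma}$, one gets $\|D^2A(v)\|\leq C(1+m_{2+\gamma}(f))$ uniformly in $v$, hence $\|(A(v))^{1/2}-(A(w))^{1/2}\|^2\leq C(1+m_{2+\gamma}(f))|v-w|^2$; undoing the weight $(1+|v|^2)^{\gamma/4}$ and using (v) produces the extra factor $(1+|v|^2+|w|^2)$. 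This route needs only the moment bound $m_{2+\gamma}(f)<\infty$, no ellipticity, and handles degenerate $a(f,v)$ automatically --- it is the ingredient your argument is missing. If you want to keep your two-regime structure, you could simply replace the $|v-w|<1$ analysis by this second-derivative argument; the case distinction then becomes unnecessary.
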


\begin{proof}
First, we have $|b(f,v)|\leq 2 \intrd |v-w|^{1+\gamma}f(dw) \leq C(|v|^{1+\gamma}+m_{1+\gamma}(f))$
and $\|a(f,v)\|\leq \|\raa(f,v)\|^2=\Tr \; a(f,v)=\intrd \Tr\; a(v-w) f(dw)=2 \intrd |v-w|^{2+\gamma} f(dw)
\leq  C (   |v|^{2+\gamma} +m_{2+\gamma}(f)  )$. 

\vip

Next, $|b(f,v)-b(f,w)|=|\intrd (b(v-z)-b(w-z))f(dz)|$, so that by Lemma \ref{f1},
$$
|b(f,v)-b(f,w)|
\leq C |v-w| \intrd (|v-z|^\gamma +|w-z|^\gamma )f(dz)\leq C |v-w|(|v|^\gamma+|w|^\gamma + m_{\gamma}(f)).$$
With the same arguments, one finds $\|a(f,v)-a(f,w)\|
\leq C |v-w| \intrd (|v-z|^{1+\gamma} +|w-z|^{1+\gamma} )f(dz)\leq C |v-w|(|v|^{1+\gamma}+|w|^{1+\gamma} + m_{1+\gamma}(f))$.

\vip

Point (vi) is more difficult, although probably far from being optimal.
Stroock and Varadhan \cite[Theorem 5.2.3]{SV}
state that there is $C>0$ such that for all $A: \rr^3 \mapsto S_3^+$,
$\|D(A^{1/2})\|_\infty \leq C \|D^2 A\|_{\infty}^{1/2}$, which we apply to $A(v) = (1+|v|^2)^{-\gamma/2}a(f,v)$. 
Observing that $\|a(z)\|\leq C |z|^{2+\gamma}$, that $\|D a(z)\| \leq C |z|^{1+\gamma}$ and that
$\|D^2 a(z)\| \leq C |z|^{\gamma}$, we find
\begin{align*}
\|D^2 A(v)\| \leq& C\Big[(1+|v|^2)^{-\gamma/2} \intrd |v-z|^\gamma f(dz) + (1+|v|^2)^{-\gamma/2-1/2} \intrd |v-z|^{1+\gamma} 
f(dz) \\
& \hskip6cm +  (1+|v|^2)^{-\gamma/2-1} \intrd |v-z|^{2+\gamma}f(dz) \Big]\\
\leq & C (1+m_{2+\gamma}(f)).
\end{align*}
Thus $\|D(A^{1/2})\|_\infty^2\leq C (1+m_{2+\gamma}(f))$ and
$\|(A(v))^{1/2}-(A(w))^{1/2}\|^2 \leq C  (1+m_{2+\gamma}(f))|v-w|^2$. We now write,
using that $(A(v))^{1/2}=(1+|v|^2)^{-\gamma/4}\raa(f,v)$,
\begin{align*}
\|\raa(f,v)-\raa(f,w)\|^2\leq & 2 (1+|v|^2)^{\gamma/2} 
\|(A(v))^{1/2}-(A(w))^{1/2}\|^2 \\
&+ 2(1+|w|^2)^{-\gamma/2}\big|(1+|v|^2)^{\gamma/4}- (1+|w|^2)^{\gamma/4}\big|^2 \|\raa(f,w)\|^2.
\end{align*}
Recalling (v) and using that $|(1+|w|^2)^{\gamma/4}-(1+|v|^2)^{\gamma/4}|\leq C |v-w|$,
we get 
\begin{align*}
&\|\raa(f,v)-\raa(f,w)\|^2\\
\leq & C|v-w|^2\Big[(1+|v|^2)^{\gamma/2}  (1+m_{2+\gamma}(f))
+(1+|w|^2)^{-\gamma/2}( |w|^{2+\gamma} +m_{2+\gamma}(f)) \Big].
\end{align*}
This can be bounded by $C|v-w|^2 (1+m_{2+\gamma}(f))(1+|v|^2+|w|^2)$ as desired.
\end{proof}

\section{Well-posedness of the particle system and of the Landau process}\label{secch}

We first verify that the particle system \eqref{ps} is well-posed.

\begin{proof}[Proof of Proposition \ref{pswp}]
Since $b$ and $\sigma$ are locally Lipschitz continuous by Lemma \ref{f1}, the system
classically admits a pathwise unique {\it local} solution $(V^{i,N}_t)_{i=1,\dots,N,t\in [0,\tau)}$
with $\tau=\sup_{k\geq 1} \tau_k$ and $\tau_k=\inf\{t\geq 0 \; : \; \sum_1^N |V^{i,N}_t|^2 \geq k  \}$.
We now show that a.s., $\sum_1^N V^{i,N}_t=\sum_1^N V^{i,N}_0$ and $\sum_1^N |V^{i,N}_t|^2=\sum_1^N |V^{i,N}_0|^2$
for all $t\in [0,\tau)$. This will of course imply that $\tau=\infty$ and thus end the proof.

\vip

Summing \eqref{ps} over $i=1,\dots,N$, using that $b(-x)=-b(x)$, that $\sigma(-x)=\sigma(x)$,
that $\sigma(0)=0$ 
and that $B^{ij}=-B^{ji}$ for all $i \ne j$, we immediately find that $\sum_1^N V^{i,N}_t=\sum_1^N V^{i,N}_0$
for all $t\in [0,\tau)$. 
We next apply the It\^o formula, which is licit on $[0,\tau)$, to get, using that $\sigma(x) \sigma^*(x) 
= a(x)$,
\begin{align*}
\sum_{i=1}^N |V^{i,N}_t|^2=&\sum_{i=1}^N |V^{i,N}_0|^2 + \frac 1 N \sum_{i,j=1}^N \intot
[2V^{i,N}_s \cdot b(V^{i,N}_s-V^{j,N}_s)+\Tr \; a(V^{i,N}_s-V^{j,N}_s)]ds \\
&+ \frac 2 {\sqrt N} \sum_{i,j=1}^N \intot V^{i,N}_s \cdot \sigma(V^{i,N}_s-V^{j,N}_s)dB^{ij}_s.
\end{align*}
But since $b(x)=-2|x|^\gamma x$ and $\Tr \; a(x)=2|x|^{\gamma +2}$,
\begin{align*}
&\sum_{i,j=1}^N [2V^{i,N}_s \cdot b(V^{i,N}_s-V^{j,N}_s)+\Tr \; a(V^{i,N}_s-V^{j,N}_s)]\\
=&\sum_{i,j=1}^N [(V^{i,N}_s-V^{j,N}_s)\cdot b(V^{i,N}_s-V^{j,N}_s)+\Tr\;a(V^{i,N}_s-V^{j,N}_s)]=0.
\end{align*}
Using next that $\sigma(-x)=\sigma(x)$ and that $B^{ij}=-B^{ji}$, we also have 
$$
\sum_{i,j=1}^N V^{i,N}_s \cdot \sigma(V^{i,N}_s-V^{j,N}_s)dB^{ij}_s=\sum_{1\leq i<j\leq N}(V^{i,N}_s-V^{j,N}_s) 
\cdot \sigma(V^{i,N}_s-V^{j,N}_s)dB^{ij}_s, 
$$
which a.s. vanishes because $\sigma(x)=|x|^{1+\gamma/2}\Pi_{x^\perp}$ (so that $x^* \sigma(x)=0$).
We conclude that $\sum_1^N |V^{i,N}_t|^2=\sum_1^N |V^{i,N}_0|^2$ on $[0,\tau)$, which ends the proof.
\end{proof}

We next build our Landau process.

\begin{prop}\label{f3}
Fix $\gamma \in [0,1]$ and $f=(f_t)_{t\geq 0} \in L^\infty_{loc}([0,\infty),\cP_{2+\gamma}(\rd))$, as well
as $g_0 \in \cP_2(\rd)$ and a $g_0$-distributed random variable $V_0$ independent of a 3D
Brownian motion $(B_t)_{t\geq 0}$. 

\vip

(i) The SDE $V_t=V_0+\intot b(f_s,V_s)ds + \intot \raa(f_s,V_s)dB_s$ has a pathwise unique solution.

\vip

(ii) If $f$ is a weak solution to \eqref{HL3D}
and if $g_0=f_0$, then $V_t$ is $f_t$ distributed for all $t\geq 0$.
\end{prop}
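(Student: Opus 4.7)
My plan for part (i) is to combine the local Lipschitz estimates in Lemma \ref{f2}(ii) and (vi) with the linear-growth controls in (i) and (v). On every ball $\{|v|+|w|\leq R\}$ the coefficients $v\mapsto b(f_s,v)$ and $v\mapsto \raa(f_s,v)$ are Lipschitz continuous with constant depending only on $R$ and on $\sup_{[0,T]} m_{2+\gamma}(f_s)$, so a standard truncation and Picard iteration procedure yields a pathwise unique solution up to the explosion time $\tau_\infty=\sup_k\tau_k$ with $\tau_k=\inf\{t\geq 0:|V_t|\geq k\}$. To exclude explosion I apply It\^o's formula to $|V_t|^2$: using the pointwise identities \eqref{sev} the drift simplifies to
\begin{equation*}
2V_t\cdot b(f_t,V_t)+\Tr\,a(f_t,V_t)=\intrd 2|V_t-w|^\gamma\bigl(|w|^2-|V_t|^2\bigr)f_t(dw),
\end{equation*}
which, using the subadditivity $(|V_t|+|w|)^\gamma\leq|V_t|^\gamma+|w|^\gamma$ valid for $\gamma\in[0,1]$ and $|V_t|^\gamma\leq 1+|V_t|^2$, is bounded by $C(1+|V_t|^2)(1+m_{2+\gamma}(f_t))$. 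Localization at $\tau_k$, expectation, and Gronwall's lemma then give a $k$-uniform bound on $\E[|V_{t\wedge\tau_k}|^2]$ on every compact time interval, so $\tau_\infty=\infty$ a.s.

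For part (ii), let $g_t:=\mathrm{Law}(V_t)$, where $V$ is the unique solution from (i). I would apply It\^o's formula to $\varphi(V_t)$ for $\varphi\in C^2_c(\rd)$; because $\nabla\varphi$ and $D^2\varphi$ are compactly supported in $v$ and $b(v-v_*),a(v-v_*)$ grow only polynomially in $v_*$, the integrands are bounded in $v$ and integrable against $f_s$ (using that $f_s\in\cP_{2+\gamma}(\rd)$), so taking expectations is unproblematic and yields
\begin{equation*}
\intrd \varphi(v)g_t(dv)=\intrd \varphi(v)g_0(dv)+\intot\intrd\intrd L\varphi(v,v_*)g_s(dv)f_s(dv_*)\,ds.
\end{equation*}
When $g_0=f_0$, the weak formulation of Definition \ref{ws} says that $t\mapsto f_t$ satisfies exactly the same identity (with $f$ frozen in the second slot). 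It therefore suffices to prove that this \emph{linear} time-inhomogeneous Fokker--Planck equation, with drift $b(f_s,\cdot)$ and diffusion $a(f_s,\cdot)$ and initial datum $f_0$, admits a unique measure-valued solution in the test class $C^2_c(\rd)$; a standard density argument then extends the equality $g_t=f_t$ tested against $C^2_b$.

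To obtain this uniqueness I would invoke the pathwise uniqueness established in (i) applied to the linear SDE with coefficients $b(f_s,\cdot),\raa(f_s,\cdot)$: via Yamada--Watanabe it gives uniqueness in law, hence well-posedness of the associated martingale problem on $C([0,\infty),\rd)$. A superposition-principle argument in the spirit of Figalli or Trevisan then lifts any measure-valued solution of the linear Fokker--Planck equation to a probability measure on path space solving the martingale problem, which must therefore coincide with the law of $V$; in particular $f_t=g_t$ for all $t\geq 0$. The main obstacle will be this last step: because $f$ is allowed to be merely a measure-valued weak solution, the coefficients $b(f_s,v),a(f_s,v)$ are only locally Lipschitz in $v$ with superlinear growth and carry no smoothness in $v$, and one must verify the integrability hypotheses of the superposition principle by means of the second-moment bound on $V_t$ derived above (localized on $[0,T]$).
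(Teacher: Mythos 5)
Your part (i) is exactly the paper's argument: the local Lipschitz bounds of Lemma \ref{f2} give a pathwise unique local solution, and the It\^o/Gronwall estimate on $\E[|V_{t\wedge\tau_k}|^2]$, based on the identity $2v\cdot b(f_s,v)+\Tr\, a(f_s,v)=2\intrd|v-w|^\gamma(|w|^2-|v|^2)f_s(dw)\leq C(1+m_{2+\gamma}(f_s))(1+|v|^2)$, rules out explosion. For part (ii) you follow the same overall strategy as the paper — both $\cL(V_t)$ and $f_t$ solve the \emph{linear} Fokker--Planck equation with coefficients $b(f_s,\cdot)$, $a(f_s,\cdot)$ and initial datum $f_0$, so everything reduces to uniqueness of measure-valued solutions of that linear equation — but you close this key step with a different tool. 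The paper cites the Horowitz--Karandikar uniqueness theorem for the forward equation and checks their conditions (a)--(e), the substantial one being well-posedness in law of the SDE from arbitrary deterministic $(t_0,v_0)$, which part (i) supplies. You instead invoke a Figalli--Trevisan superposition principle to lift any measure solution to a solution of the martingale problem, and then conclude by uniqueness in law (pathwise uniqueness from (i) plus Yamada--Watanabe, noting that a martingale-problem solution is a weak solution of the SDE with diffusion coefficient $\raa(f_s,\cdot)$). This route is sound, and the obstacle you flag at the end is not a real one: the superposition principle needs only measurability of the coefficients together with an integrability condition, and by Lemma \ref{f2} one has $\intrd(|b(f_s,v)|+\|a(f_s,v)\|)f_s(dv)\leq C(1+m_{2+\gamma}(f_s))$, which is locally bounded in time precisely because $f\in L^\infty_{loc}([0,\infty),\cP_{2+\gamma}(\rd))$; narrow continuity of $t\mapsto f_t$ is immediate from the weak formulation. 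In short, the two approaches are of comparable weight: the paper's avoids any integrability check on the measure solution but requires verifying the separability-type condition (d) on the family of generators, whereas yours trades that for the (easy) integrability verification and leans on a more recent general theorem.
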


\begin{proof}
We start with point (i). Since the coefficients $v\mapsto b(f_s,v)$ and $v\mapsto \raa(f_s,v)$
are locally Lipschitz continuous (uniformly on compact time intervals) by Lemma \ref{f2}
and because $f \in  L^\infty_{loc}([0,\infty),\cP_{2+\gamma}(\rd))$ by assumption, the SDE under study classically has
a pathwise unique {\it local} solution $(V_t)_{t\in [0,\tau)}$, where $\tau=\sup_{k\geq 1} \tau_k$ and 
$\tau_k=\inf\{t\geq 0 \; : \; |V_t| \geq k  \}$. We thus only have to verify that $\tau=\infty$ a.s.
Using the It\^o formula and taking expectations, one easily checks that for all $k\geq 1$, all $t\geq 0$,
$\E[|V_{t\land \tau_k}|^2]=\E[|V_{0}|^2]+ \E [\int_0^{t\land \tau_k} \kappa(s,V_s) ds ]$,
where $\kappa(s,v)=2x\cdot b(f_s,v) + \Tr \; a(f_s,v)$.
Recalling that  $b(v)=-2|v|^\gamma v$ and $\Tr \; a(v)=2|v|^{2+\gamma}$, we find that
$\kappa(s,v) = 2 \intrd |v-w|^\gamma (|w|^2-|v|^2)f_s(dw)\leq 2 \intrd |v-w|^\gamma |w|^2f_s(dw)$.
It is not hard to deduce that $\kappa(s,v) \leq C (1+m_{2+\gamma}(f_s))(1+|v|^2)$ and then that
$$
\E[|V_{t\land \tau_k}|^2]\leq m_2(g_0)+ C\intot (1+m_{2+\gamma}(f_s))\E[1+|V_{s\land \tau_k}|^2]ds
$$
for all $t\geq 0$ and all $k\geq 1$. Since $m_{2+\gamma}(f_s)$ is locally bounded by assumption,
the Gronwall lemma implies that for all $T>0$, $C_T:=\sup_{k\geq 1} \sup_{[0,T]} \E[|V_{t\land \tau_k}|^2]<\infty$.
Hence for all $T$, $\Pr(\tau_k\leq T)= k^{-2}\E[|V_{\tau_k}|^2 \indiq_{\tau_k\leq T} ]
\leq \E[|V_{T\land \tau_k}|^2]\leq C_T k^{-2} \to 0$
as $k\to \infty$. We conclude that $\tau=\infty$ a.s.

\vip

We now prove (ii).
For $t\geq 0$ and $\varphi \in C^2_b(\rd)$, we introduce 
$\cA_t\varphi(v)=\intrd L\varphi(v,v_*) f_t(dv_*)=(1/2) \sum_{k,l=1}^3 a_{kl}(f_t,v)\partial^2_{kl}\varphi(v)+ 
\sum_{k=1}^3 b_{k}(f_t,v)\partial_{k}\varphi(v)$. Then $g_t=\cL(V_t)$ solves
\begin{equation}\label{pdelin}
\intrd \varphi(v) g_t(dv) = \intrd \varphi(v) \mu(dv) + \intot \intrd \cA_s \varphi(v) g_s(dv) ds \quad
\hbox{for all $\varphi \in C^2_c(\rd)$},
\end{equation}
with $\mu=g_0$.
But $(f_t)_{t\geq 0}$, being a weak solution to \eqref{HL3D}, also solves \eqref{pdelin} with $\mu=f_0$.
Horowitz and Karandikar \cite[Theorem B.1]{hk}, who generalize
Ethier and Kurtz \cite[Chapter 4, Theorem 7.1]{ek}, tell us that 
\eqref{pdelin} has a unique solution (for any given $\mu\in\cP(\rd)$). Since  $f_0=g_0$ by assumption, we
thus have $(f_t)_{t\geq 0}=(g_t)_{t\geq 0}$.

\vip
To apply  \cite[Theorem B.1]{hk}, we need to verify the following conditions:

\vip

(a) $C^2_c(\rd)$ is dense in $C_0(\rd)$ (the set of continuous functions vanishing at infinity) 
for the uniform convergence;

\vip

(b) for each $\varphi \in C^2_c(\rd)$, $(t,v) \mapsto \cA_t \varphi(v)$ is measurable;

\vip

(c) for each $t\geq 0$, if $\varphi \in C^2_c$ attains its maximum at $v_0$, then $\cA_t\varphi(v_0)\leq 0$;

\vip

(d) there is a countable family $\{\varphi_k\}_{k\geq 1} \subset C_c(\rd)$ such that for all $t\geq 0$,
$\{(\varphi_k,\cA_t\varphi_k)\}_{k\geq 1}$ is dense in $\{(\varphi,\cA_t\varphi),\; \varphi \in C^2_c(\rd)\}$ 
for the bounded-pointwise convergence;

\vip

(e) for any deterministic $(t_0,v_0)\in [0,\infty)\times \rd$, there exists a unique (in law) 
continuous $\rd$-valued process $(X_t)_{t\geq t_0}$ such that $X_{t_0}=v_0$ and 
for all $\varphi\in C^2_c(\rd)$, the process $\varphi(X_t)-\int_{t_0}^t \cA_s\varphi(X_s)ds$ is a martingale.

\vip

Points (a) and (b) are obvious, as well as point (c) (simply because $\nabla \varphi(v_0)=0$, 
because the Hessian $(\partial_{kl}\varphi(v_0))_{kl}$ is non-positive and because $a(f_t,v_0)$ is nonnegative).
Point (e) is equivalent to the existence and uniqueness in law,
for each $(t_0,v_0)\in [0,\infty)\times \rd$, for the SDE $V_t=v_0+\int_{t_0}^t b(f_s,V_s)ds
+\int_{t_0}^t \raa(f_s,V_s)dB_s$. If $t_0=0$, this follows from point (i) (choose $g_0=\delta_{v_0}$).
The generalization to all positive values of $t_0$ is clearly not an issue.
For (d),
consider a countable family
$\{\varphi_k\}_{k\geq 1} \subset C_c(\rd)$ so that for any $\varphi \in C^2_c(\rd)$ with, say 
Supp $\varphi\subset B(0,R)$, there is a subsequence $(k_n)_{n\geq 1}$ so that Supp $\varphi_{k_n}\subset B(0,2R)$
and $\lim_n[|\varphi_{k_n}-\varphi|_\infty+|\nabla\varphi_{k_n}-\nabla\varphi|_\infty+|D^2\varphi_{k_n}-D^2\varphi|_\infty
]=0$.
Then for each $t\geq 0$, we clearly have 
$\lim_n \|\cA_t\varphi_{k_n} - \cA_t \varphi\|_\infty=0$.
\end{proof}

\section{A central inequality}\label{secmain}

As already explained in Subsection \ref{ideas}, our uniqueness, stability and propagation of chaos
results are based on some coupling between SDEs, and using similar Brownian motions is not
sufficient to our purposes. We recall the following fact: 
the best coupling between two multidimensional Gaussian distributions
$\cN(0,\Sigma_1)$ and $\cN(0,\Sigma_2)$ does not, in general, consist in setting $X_1=\Sigma_1^{1/2}Y$ and
$X_2=\Sigma_2^{1/2}Y$ for the same $Y$ with law $\cN(0,\Id)$. As shown by Givens and Shortt
\cite{gs}, the optimal  coupling
is obtained when setting $X_1=\Sigma_1^{1/2}Y$ and $X_2=\Sigma_2^{1/2}U(\Sigma_1,\Sigma_2)Y$, where 
\begin{equation}\label{dfU}
U(\Sigma_1,\Sigma_2)=\Sigma_2^{-1/2}\Sigma_1^{-1/2}(\Sigma_1^{1/2}\Sigma_2\Sigma_1^{1/2} )^{1/2}
\end{equation}
is an orthogonal matrix. Point (i) below, proved in \cite{fh}, is an immediate
consequence of \cite{gs}.

\begin{lem} \label{tictactoc}
(i) Let $m$ be a probability measure on some measurable space $F$,
consider a pair of measurable families of $3\times 3$ matrices $(\sigma_1(x))_{x\in F}$ and $(\sigma_2(x))_{x\in F}$
and set $\Sigma_i=\int_F \sigma_i(x)\sigma_i^*(x) m(dx)$. If $\Sigma_1$ and $\Sigma_2$ are invertible,
$$
\bigl\|\Sigma_1^{1/2}-\Sigma_2^{1/2} U(\Sigma_1,\Sigma_2) \bigr\|^2 \leq \int_F \|\sigma_1(x)-\sigma_2(x)\|^2 m(dx).
$$

(ii) Let $\e\in(0,1)$. With the same notation as in (i) but without assuming that $\Sigma_1$ and $\Sigma_2$ 
are invertible, setting 
$U_\e(\Sigma_1,\Sigma_2)=U(\Sigma_1+\e \Id, \Sigma_2 + \e \Id)$,
$$
\bigl\|\Sigma_1^{1/2}-\Sigma_2^{1/2} U_\e(\Sigma_1,\Sigma_2) \bigr\|^2 \leq 
C\sqrt\e (1+\|\Sigma_1+\Sigma_2\|^{1/2}) +
\int_F \|\sigma_1(x)-\sigma_2(x)\|^2 m(dx),
$$
where $C$ is a universal constant.

\vip

(iii) For each $\e\in (0,1)$, the map $(\Sigma_1,\Sigma_2) \mapsto U_\e(\Sigma_1,\Sigma_2)$ is 
locally Lipschitz continuous on $S_3^+\times S_3^+$.
\end{lem}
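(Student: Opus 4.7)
For (i), the plan is to interpret \eqref{dfU} probabilistically, following Givens--Shortt \cite{gs}. A direct expansion gives, for any orthogonal $U$,
\[
\bigl\|\Sigma_1^{1/2}-\Sigma_2^{1/2}U\bigr\|^2 = \Tr\,\Sigma_1 + \Tr\,\Sigma_2 - 2\Tr\bigl(\Sigma_1^{1/2}U^*\Sigma_2^{1/2}\bigr),
\]
and substituting $U=U(\Sigma_1,\Sigma_2)$ from \eqref{dfU} collapses the cross term to $\Tr((\Sigma_1^{1/2}\Sigma_2\Sigma_1^{1/2})^{1/2})$. The right-hand side is precisely the squared $\cW_2$-distance between two centered Gaussians with covariances $\Sigma_1$ and $\Sigma_2$. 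An upper bound for this distance is obtained by exhibiting the natural coupling: if $W$ is an $m$-white noise on $F$, the variables $X_i = \int_F \sigma_i(x)\,dW(x)$ are centered Gaussian with the correct covariances and $\E|X_1-X_2|^2 = \int_F \|\sigma_1-\sigma_2\|^2\,dm$.

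For (ii), I would reduce to (i) by enlarging the measure space. Set $\tilde F = F \cup \{\ast\}$ with $\tilde m = m + \delta_\ast$, and $\tilde\sigma_i(x)=\sigma_i(x)$ for $x \in F$, $\tilde\sigma_i(\ast)=\sqrt{\e}\,\Id$. Then $\int_{\tilde F} \tilde\sigma_i\tilde\sigma_i^*\,d\tilde m = \Sigma_i+\e\Id$, both are invertible, and the orthogonal matrix furnished by \eqref{dfU} on the augmented data is by construction $U_\e(\Sigma_1,\Sigma_2)$. The extra atom contributes $\|\sqrt\e\Id-\sqrt\e\Id\|^2=0$, so (i) gives
\[
\bigl\|(\Sigma_1+\e\Id)^{1/2} - (\Sigma_2+\e\Id)^{1/2} U_\e(\Sigma_1,\Sigma_2)\bigr\|^2 \leq \int_F \|\sigma_1-\sigma_2\|^2\,dm.
\]
The first inequality of Lemma \ref{math} (applied to $A=\Sigma_i+\e\Id$ and $B=\Sigma_i$) provides $\|(\Sigma_i+\e\Id)^{1/2}-\Sigma_i^{1/2}\|\leq C\sqrt\e$. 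Writing the triangle inequality between the three matrices $\Sigma_1^{1/2}$, $(\Sigma_1+\e\Id)^{1/2}$, $(\Sigma_2+\e\Id)^{1/2} U_\e$, $\Sigma_2^{1/2} U_\e$ and using that $U_\e$ is orthogonal (so $\|AU_\e\|=\|A\|$), then squaring via $(b+c)^2\leq b^2+2bc+c^2$ with $b\leq C(1+\|\Sigma_1+\Sigma_2\|^{1/2}+\sqrt\e)$ (itself obtained from $\|(\Sigma_i+\e\Id)^{1/2}\|^2=\Tr\Sigma_i+3\e$ and $\Tr\Sigma_i\leq\sqrt 3\,\|\Sigma_i\|\leq\sqrt 3\,\|\Sigma_1+\Sigma_2\|$) and $c\leq C\sqrt\e$, yields the announced bound.

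For (iii), I would rely on standard matrix functional calculus. On any bounded subset of $S_3^+\times S_3^+$, the spectra of $\Sigma_i+\e\Id$ lie in a compact subinterval of $[\e,\infty)$, so the maps $\Sigma\mapsto(\Sigma+\e\Id)^{\pm 1/2}$ are smooth there (one may, for instance, appeal to the representation $A^{1/2}=\pi^{-1}\int_0^\infty \lambda^{-1/2}A(A+\lambda\Id)^{-1}d\lambda$ and differentiate). Similarly, the outer square root applied to $(\Sigma_1+\e\Id)^{1/2}(\Sigma_2+\e\Id)(\Sigma_1+\e\Id)^{1/2}$, which stays bounded below by $\e^2\Id$, is Lipschitz on bounded subsets. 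Products and compositions of locally Lipschitz maps being locally Lipschitz, the claim for $U_\e$ follows.

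The main obstacle is realizing the error term in (ii) in the precise form $C\sqrt\e(1+\|\Sigma_1+\Sigma_2\|^{1/2})$: the $\sqrt\e$ rate (rather than $\e$) is unavoidable when using the unconditional Hölder-type bound from Lemma \ref{math} (we cannot assume $\Sigma_1,\Sigma_2$ invertible), and the factor $\|\Sigma_1+\Sigma_2\|^{1/2}$ appears in the cross term of the squared triangle inequality through the \emph{size} of $(\Sigma_i+\e\Id)^{1/2}$. Part (i) is essentially a repackaging of \cite{gs}, and (iii) is pure functional calculus, so neither should cause genuine difficulty.
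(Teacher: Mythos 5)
Your argument is correct and essentially the paper's: (i) is the Givens--Shortt optimal Gaussian coupling bound realized by the white-noise coupling (the paper simply cites it from \cite{fh}), (ii) augments the space by one atom so that the covariances become $\Sigma_i+\e\Id$, applies (i), and removes the $\e\Id$ via Lemma \ref{math} at cost $C\sqrt\e$ plus a cross term of size $\sqrt\e\,\|\Sigma_1+\Sigma_2\|^{1/2}$, and (iii) is routine functional calculus (the paper calls it obvious). The only nitpick is that your augmented measure $\tilde m=m+\delta_\ast$ has total mass $2$ whereas (i) is stated for probability measures; either normalize (the paper takes $m'=(1-\e)\indiq_F m+\e\delta_\Delta$ with $\sigma_i'=(1-\e)^{-1/2}\sigma_i$ on $F$ and $\Id$ at the atom, which leaves both sides of the inequality unchanged) or observe that your white-noise proof of (i) never uses that $m$ has mass one.
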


Of course, we introduced $U_\e$ to avoid some technical problems, because we will generally not 
be able to control the invertibility of the matrices we will study.

\begin{proof}
Point (i) is nothing but \cite[Lemma 3.1]{fh} and point (iii) is obvious. To check (ii), we introduce
the space
$F'=F\cup\{\Delta\}$ (where $\Delta\notin F$ is some abstract point), the probability measure 
$m'=(1-\e)\indiq_F m + \e\delta_\Delta$ on $F'$, and the maps 
$\sigma_i'=(1-\e)^{-1/2}\sigma_i \indiq_F+\Id\indiq_{\{\Delta\}}$ from $F'$ to $\cM_{3\times 3}(\rr)$.
It holds that $\int_{F'} \sigma_i'(\sigma_i')^* dm'= \Sigma_i+\e \Id$, so that point (i) yields
$$
\bigl\|(\Sigma_1+\e \Id)^{1/2}-(\Sigma_2+\e \Id)^{1/2} U_\e(\Sigma_1,\Sigma_2) \bigr\|^2\! \leq \! 
\int_{F'} \|\sigma_1'(x)-\sigma_2'(x)\|^2 m'(dx)\! =\! \int_{F} \|\sigma_1(x)-\sigma_2(x)\|^2 m(dx).
$$
It then easily follows, using that $U_\e(\Sigma_1,\Sigma_2)$ is orthogonal (whence $\|U_\e(\Sigma_1,\Sigma_2)\|^2
=\Tr \; \Id =3$) and Lemma \ref{math} (which gives $\|(\Sigma_i+\e \Id)^{1/2} -\Sigma_i^{1/2}\|\leq C \sqrt \e$),
that
\begin{align*}
\bigl\|\Sigma_1^{1/2}-\Sigma_2^{1/2} U_\e(\Sigma_1,\Sigma_2) \bigr\| \leq& C\sqrt \e +
\bigl\|(\Sigma_1+\e \Id)^{1/2}-(\Sigma_2+\e \Id)^{1/2} U_\e(\Sigma_1,\Sigma_2) \bigr\| \\
\leq& C\sqrt \e + \Big( \int_{F} \|\sigma_1(x)-\sigma_2(x)\|^2 m(dx) \Big)^{1/2}.
\end{align*}
The conclusion follows: it suffices to take squares and to note that 
$\int_{F} \|\sigma_1(x)-\sigma_2(x)\|^2 m(dx)\leq 2 \int_{F} (\|\sigma_1(x)\|^2+\|\sigma_2(x)\|^2) m(dx)
=2 \Tr (\Sigma_1+\Sigma_2)\leq C \|\Sigma_1+\Sigma_2\|$.
\end{proof}

The following proposition, to be used several times for both uniqueness and propagation of chaos, plays a
central role in the paper. The $\e$ present in the statement is here only for technical reasons and may be
disregarded at first read.

\begin{prop}\label{f4}
Let $\gamma \in [0,1]$ be fixed, let $f,g \in \cP_{2+\gamma}(\rd)$ and $R \in \cH(f,g)$.
For $\e\in(0,1)$, let
\begin{align*}
\Gamma_\e(R)=\int_{\rd\times\rd} \Big(\|\raa(f,v)-\raa(g,w)&U_\e(a(f,v),a(g,v))\|^2\\
&+2(v-w)\cdot(b(f,v)-b(g,w)) \Big) R(dv,dw).
\end{align*}

(i) If $\gamma=0$, there is a universal constant $C$ such that $\Gamma_\e(R) \leq C \sqrt \e (1+m_2(f+g))^{1/2}$.

\vip

(ii) If $\gamma\in (0,1]$, then we fix $\alpha>\gamma$. 
There are some constants $\kappa>0$ and $C$ depending only on $\gamma,\alpha$, such that
for all $M>0$,
$$
\Gamma_\e(R) \leq C \sqrt \e (1+m_{2+\gamma}(f+g))^{1/2}
+ M \int_{\rd\times\rd} |v-w|^2 R(dv,dw) + C (1+m_{2+\gamma}(g)+\cE_{\alpha}(f)) e^{-\kappa M^{\alpha/\gamma}}.
$$
\end{prop}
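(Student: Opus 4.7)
The proof combines three ingredients: the matrix coupling Lemma \ref{tictactoc}(ii), a symmetrization in $R\otimes R$, and---for $\gamma>0$---a truncation argument exploiting $\cE_\alpha(f)$.

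First, writing $a(f,v)=\int\sigma(v-v_*)\sigma^*(v-v_*)R(dv_*,dw_*)$ and analogously for $a(g,w)$ (both use that $R$ has marginals $f,g$), I would apply Lemma \ref{tictactoc}(ii) pointwise in $(v,w)$. Integrating against $R(dv,dw)$ and using Jensen together with Lemma \ref{f2}(iii) to bound $\int\|a(f,v)+a(g,w)\|^{1/2}dR$ extracts the $C\sqrt\e(1+m_{2+\gamma}(f+g))^{1/2}$ piece and reduces $\Gamma_\e(R)$ to $\int\!\!\int[\|\sigma(\xi)-\sigma(\eta)\|^2+2(v-w)\cdot(b(\xi)-b(\eta))]\,dR\otimes dR$, with $\xi:=v-v_*$ and $\eta:=w-w_*$. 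Swapping $(v,w)\leftrightarrow(v_*,w_*)$ in this double integral, and using $\sigma(-z)=\sigma(z)$ together with $b(-z)=-b(z)$, symmetrizes the $b$-part into $(\xi-\eta)\cdot(b(\xi)-b(\eta))$.

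Computing $\Tr[\sigma(\xi)\sigma(\eta)]=|\xi|^{1+\gamma/2}|\eta|^{1+\gamma/2}(1+(\hat\xi\cdot\hat\eta)^2)$ and using $b(z)=-2|z|^\gamma z$, the symmetrized integrand factors as
$$\|\sigma(\xi)-\sigma(\eta)\|^2+(\xi-\eta)\cdot(b(\xi)-b(\eta))=-2|\xi||\eta|(pc-q)(qc-p),$$
where $p=|\xi|^{\gamma/2}$, $q=|\eta|^{\gamma/2}$, $c=\hat\xi\cdot\hat\eta\in[-1,1]$. For $\gamma=0$ the right-hand side reduces to $-2|\xi||\eta|(1-c)^2\leq 0$, so the double integral is non-positive and case (i) is immediate. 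For $\gamma\in(0,1]$, the downward parabola $c\mapsto -(pc-q)(qc-p)$ has roots $p/q,q/p$ with vertex $(p/q+q/p)/2\geq 1$ by AM--GM, so its maximum on $[-1,1]$ is attained at $c=1$ and equals $(p-q)^2$. Combined with the sub-additivity $|r_1^{\gamma/2}-r_2^{\gamma/2}|\leq|r_1-r_2|^{\gamma/2}$, this yields the integrand bound $2|\xi||\eta|(|\xi|^{\gamma/2}-|\eta|^{\gamma/2})^2\leq 2|\xi||\eta||\xi-\eta|^\gamma \leq 2|\xi||\eta|(|v-w|+|v_*-w_*|)^\gamma$.

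It remains, for $\gamma\in(0,1]$, to bound this double integral by $M\int|v-w|^2 dR+C(1+m_{2+\gamma}(g)+\cE_\alpha(f))e^{-\kappa M^{\alpha/\gamma}}$, and my plan here is to split on $A=\{|v|\vee|v_*|\leq M^{1/\gamma}\}$. On $A$, Young's inequality $|z|^\gamma\leq \delta|z|^2+C_\gamma\delta^{-\gamma/(2-\gamma)}$ applied to $|v-w|^\gamma,|v_*-w_*|^\gamma$ with $\delta$ chosen in terms of $M$, combined with $|\xi-\eta|^2\leq 2(|v-w|^2+|v_*-w_*|^2)$ and the $(v,w)\leftrightarrow(v_*,w_*)$ symmetry, extracts the leading $M\int|v-w|^2 dR$; on $A^c$, the bound $\indiq_{\{|v|>M^{1/\gamma}\}}\leq e^{-\kappa'M^{\alpha/\gamma}}\exp(|v|^\alpha)$ (and the analogue for $v_*$), integrated against $f$, supplies the $\cE_\alpha(f)$-prefactor, while polynomial moments of $g$ at order at most $2+\gamma$ enter through the remaining $|w|,|w_*|,|\eta|$ factors. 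The main obstacle is precisely this last estimate: the integrand couples all four variables through $|\xi||\eta||\xi-\eta|^\gamma$, and only $f$ carries an exponential moment---a restriction that matters because the intended application (propagation of chaos) will use this inequality with $g$ an empirical measure for which no exponential moment is available. The threshold $r=M^{1/\gamma}$ is forced by matching the Young-correction on $A$ with the desired rate $M^{\alpha/\gamma}$ in the exponential on $A^c$, and the assumption $\alpha>\gamma$ provides the slack needed for polynomial-in-$M$ corrections to be absorbed.
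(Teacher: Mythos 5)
Up to the factorization, your argument is the paper's: the application of Lemma \ref{tictactoc}-(ii) with $m=R$ and $\sigma_1=\sigma(v-v_*)$, $\sigma_2=\sigma(w-w_*)$, the extraction of the $C\sqrt\e(1+m_{2+\gamma}(f+g))^{1/2}$ term, the symmetrization of the drift part (the antisymmetric remainder integrates to zero against $R\otimes R$), and your identity $\|\sigma(\xi)-\sigma(\eta)\|^2+(\xi-\eta)\cdot(b(\xi)-b(\eta))=-2|\xi||\eta|(pc-q)(qc-p)$ is a correct repackaging of the paper's Steps~1 and~3; in particular your proof of (i) is complete. The genuine gap is in (ii), at the sub-additivity step $\big||\xi|^{\gamma/2}-|\eta|^{\gamma/2}\big|\le|\xi-\eta|^{\gamma/2}$. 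This trades the \emph{quadratic} smallness in the coupling for a $\gamma$-power, and the resulting integrand bound $2|\xi||\eta|\,|\xi-\eta|^\gamma$ (a fortiori $2|\xi||\eta|(|v-w|+|v_*-w_*|)^\gamma$) is provably too weak: no truncation or Young argument afterwards can recover the stated estimate. Indeed, take $f$ fixed, compactly supported, not a Dirac, and let $R$ be the law of $(V,(1+\theta)V)$ with $V\sim f$ and $\theta$ small. Then $\int|v-w|^2R(dv,dw)=\theta^2m_2(f)$, while $\int\int 2|\xi||\eta||\xi-\eta|^\gamma\,dR\,dR=2(1+\theta)\theta^\gamma\int\int|v-v_*|^{2+\gamma}f(dv)f(dv_*)=c_f\theta^\gamma$ with $c_f>0$; choosing $M\sim(\log(1/\theta))^{\gamma/\alpha}$ so that $C(1+m_{2+\gamma}(g)+\cE_\alpha(f))e^{-\kappa M^{\alpha/\gamma}}\le c_f\theta^\gamma/2$, the right-hand side of (ii) is $c_f\theta^\gamma/2+O(\theta^2(\log(1/\theta))^{\gamma/\alpha})<c_f\theta^\gamma$ for $\theta$ small. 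The same defect shows in your sketched completion: with $\delta\sim M/(|\xi||\eta|)$ the Young correction on $A$ is of order $M^{-\gamma/(2-\gamma)}(|\xi||\eta|)^{2/(2-\gamma)}$, which decays only polynomially in $M$ (or grows if you first bound $|\xi|\le 2M^{1/\gamma}$) and calls for moments of $g$ beyond $2+\gamma$; none of this fits the allowed error $C(1+m_{2+\gamma}(g)+\cE_\alpha(f))e^{-\kappa M^{\alpha/\gamma}}$. You correctly identified this as ``the main obstacle,'' but it is not resolved.

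The repair is a sharper elementary inequality at exactly that spot: for $x,y\ge0$, $(x\lor y)^{1-\gamma/2}|x^{\gamma/2}-y^{\gamma/2}|\le|x-y|$, which gives $2|\xi||\eta|(|\xi|^{\gamma/2}-|\eta|^{\gamma/2})^2\le 2(|\xi|\land|\eta|)^{\gamma}|\xi-\eta|^2\le 4|v-v_*|^{\gamma}\,(|v-w|^2+|v_*-w_*|^2)$. This keeps the coupling distance squared and, crucially, puts the large prefactor on $|v-v_*|^\gamma$, i.e.\ on variables distributed according to $f$ only. Integrating and using the $(v,w)\leftrightarrow(v_*,w_*)$ symmetry, one is left with $8\int\int|v-w|^2|v-v_*|^\gamma f(dv_*)R(dv,dw)$, and the split according to whether $8|v-v_*|^\gamma\le M$ or not yields directly the term $M\int|v-w|^2R(dv,dw)$, while on the complement the indicator is dominated by $e^{-a(M/16)^{\alpha/\gamma}}(e^{a|v|^\alpha}+e^{a|v_*|^\alpha})$ and a Young inequality on the exponentials leaves only $m_{2+\gamma}(g)$ and $\cE_\alpha(f)$, as required. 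This is the paper's route; with this single correction the rest of your plan goes through, and no $\delta$-Young step is needed on the good set.
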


As already mentioned, it is important
that no exponential moment of $g$ is required in (ii).

\begin{proof} We thus fix $\gamma \in [0,1]$, $f,g \in \cP_{2+\gamma}(\rd)$, $R \in \cH(f,g)$ and $\e\in(0,1)$.

\vip

{\it Step 1.}
We first verify that for all $x,y \in \rd$,
$$
\|\sigma(x)-\sigma(y)\|^2\leq 2|x|^{2+\gamma}+2|y|^{2+\gamma}-4(|x||y|)^{\gamma/2}(x\cdot y).
$$
Recall that $\sigma(x)=|x|^{1+\gamma/2}\Pi_{x^\perp}$ and that $\|\sigma(x)\|^2=\Tr\; a(x)=2|x|^{2+\gamma}$: 
we have to check that
$\lps \sigma(x),\sigma(y)\rps \geq 2(|x||y|)^{\gamma/2}(x\cdot y)$, {\it i.e.} that 
$\lps \Pi_{x^\perp},\Pi_{y^\perp}\rps \geq 2(x\cdot y)/(|x||y|)$.
A computation shows that $\Pi_{x^\perp}\Pi_{y^\perp}=\Id - |x|^{-2}xx^* - |y|^{-2}yy^* + (x.y)|x|^{-2}|y|^{-2} xy^*$ and thus
$\lps \Pi_{x^\perp},\Pi_{y^\perp}\rps=\Tr\;\Pi_{x^\perp}\Pi_{y^\perp}=1+ (x\cdot y)^2/(|x|^2|y|^2)$.
The conclusion follows.

\vip

{\it Step 2.} We fix $v$ and $w$ and we apply Lemma \ref{tictactoc}-(ii) with $F=\rd\times\rd$, with $m=R(dy,dz)$,
with $\sigma_1(y,z)=\sigma(v-y)$ and $\sigma_2(y,z)=\sigma(w-z)$.
It holds that $\int_F \sigma_1 \sigma_1^* dm = \int_{\rd\times\rd} a(v-y)R(dy,dz)=a(f,v)$
(because $\sigma(x)\sigma^*(x)=a(x)$ and $R\in\cH(f,g)$)
and $\int_F \sigma_2 \sigma_2^* dm=a(g,w)$. We thus find 
\begin{align*}
\|\raa(f,v)-\raa(g,w)U_\e(a(f,v),a(g,w))\|^2 
\leq &C \sqrt\e(1+\|a(f,v)+a(g,w)\|)^{1/2} \\
&+\int_{\rd\times\rd} \|\sigma(v-y)-\sigma(w-z)\|^2R(dy,dz).
\end{align*}
Next, it holds that 
$b(f,v)-b(g,w)=\int_{\rd\times\rd} (b(v-y)-b(w-z))R(dy,dz)$ simply because
$R \in \cH(f,g)$. Recalling finally that $\|a(f,v)\|\leq C(m_{2+\gamma}(f)+|v|^{2+\gamma})$
by Lemma \ref{f2}, we get
\begin{align*}
\Gamma_\e(R) \leq& C \sqrt\e \int_{\rd\times\rd}(1+|v|^{2+\gamma}+|w|^{2+\gamma}+ m_{2+\gamma}(f+g))^{1/2} R(dv,dw)\\
&+\int_{\rd\times\rd}\int_{\rd\times\rd} \Delta(v,y,w,z)R(dy,dz) R(dv,dw)\\
\leq &  C \sqrt\e (1+m_{2+\gamma}(f+g))^{1/2} +\int_{\rd\times\rd}\int_{\rd\times\rd} \Delta(v,y,w,z)R(dy,dz) R(dv,dw)
\end{align*}
where 
\begin{align*}
\Delta(v,y,w,z)=&\|\sigma(v-y)-\sigma(w-z)\|^2+2(v-w)\cdot(b(v-y)-b(w-z)).
\end{align*}

{\it Step 3.} The goal of this step is to check that $\Delta(v,y,w,z)=\Delta_1(v,y,w,z)+\Delta_2(v,y,w,z)$, where 
$$
\Delta_1(v,y,w,z)=(v-w+y-z)\cdot(b(v-y)-b(w-z))
$$ 
is antisymmetric (i.e. $\Delta_1(y,v,z,w)=-\Delta_1(v,y,w,z)$) and where
$$
\Delta_2(v,y,w,z)\leq \begin{cases}
0 & \hbox{ if $\gamma=0$,} \\
4 (|v-w|^2+|y-z|^2)|v-y|^\gamma & \hbox{ if $\gamma \in (0,1]$.}
\end{cases}
$$

We introduce the shortened notation $\Delta_2=\Delta_2(v,y,w,z)$, $X=v-y$ and $Y=w-z$. By definition, we have
$\Delta_2=(X-Y)\cdot(b(X)-b(Y))+\|\sigma(X)-\sigma(Y)\|^2$.
Using that $b(X)=-2|X|^\gamma X$ and Step 1, we find
\begin{align*}
\Delta_2\leq & -2(X-Y)\cdot(|X|^\gamma X-|Y|^\gamma Y) +2|X|^{2+\gamma}+2|Y|^{2+\gamma} 
-4(|X||Y|)^{\gamma/2} (X\cdot Y)\\
=& 2(X\cdot Y)(|X|^{\gamma/2}-|Y|^{\gamma/2})^2.
\end{align*}
If $\gamma=0$, this gives $\Delta_2 \leq 0$. If now $\gamma \in (0,1]$,
we use that $(x \lor y)^{1-\gamma/2}|x^{\gamma/2}-y^{\gamma/2}|\leq |x-y|$ (for $x,y\geq 0$) to write
$$
\Delta_2 \leq 2|X||Y|(|X|^{\gamma/2}-|Y|^{\gamma/2})^2 \leq 2|X||Y|(|X|\lor |Y|)^{\gamma-2}(|X|-|Y|)^2
\leq 2(|X|\land |Y|)^{\gamma}|X-Y|^2.
$$
We conclude noting that $(|X|\land |Y|)^{\gamma}\leq |X|^\gamma=|v-y|^\gamma$ and
$|X-Y|^2 \leq 2(|v-w|^2+|y-z|^2)$.

\vip

{\it Step 4.} We now observe that $L:=\int_{\rd\times\rd}\int_{\rd\times\rd} \Delta_1(v,y,w,z)R(dy,dz) R(dv,dw)=0$.
Indeed, $\Delta_1$ being antisymmetric, we have
$L=\int_{\rd\times\rd}\int_{\rd\times\rd} \Delta_1(y,v,z,w)R(dy,dz) R(dv,dw)=-L$.

\vip

{\it Step 5.} When $\gamma=0$, it suffices to gather Steps 2, 3, 4 to conclude the proof.

\vip

{\it Step 6.} Finally, gathering Steps 2, 3, 4 when $\gamma \in (0,1]$ yields
\begin{align*}
\Gamma_\e(R) \leq& C\sqrt \e(1+m_{2+\gamma}(f+g))^{1/2}  + 
4 \int_{\rd\times\rd} \int_{\rd\times\rd} \!\!\!\!\!\!(|v-w|^2+ |y-z|^2)|v-y|^\gamma R(dy,dz)R(dv,dw)\\
=&C\sqrt \e(1+m_{2+\gamma}(f+g))^{1/2}  + 
8 \int_{\rd\times\rd} \int_{\rd\times\rd} |v-w|^2 |v-y|^\gamma f(dy)R(dv,dw).
\end{align*}
For the last equality, we used a symmetry argument and that the first marginal of $R$ is $f$.
Finally, we recall that $\alpha>\gamma$ is fixed and we write, for any $M>0$,
$$
8 \int_{\rd\times\rd} \int_{\rd} |v-w|^2|v-y|^\gamma f(dy)R(dv,dw)
\leq M \int_{\rd\times\rd} |v-w|^2 R(dv,dw) + I_M,
$$
where
\begin{align*}
I_M=& 8 \int_{\rd\times\rd} \int_{\rd} |v-w|^2|v-y|^\gamma\indiq_{\{8|v-y|^\gamma \geq M\}} f(dy)R(dv,dw)\\
\leq & 16 \int_{\rd\times\rd} \int_{\rd} (|v|^2+|w|^2)(|v|^\gamma+|y|^\gamma)[\indiq_{\{|v|^\gamma \geq M/16\}} 
+ \indiq_{\{|y|^\gamma \geq M/16\}} ]f(dy)R(dv,dw).
\end{align*}
We then write, for $a>0$ to be chosen later,
\begin{align*}
I_M\leq & 16 e^{-a(M/16)^{\alpha/\gamma}}
\int_{\rd\times\rd} \int_{\rd} (|v|^2+|w|^2)(|v|^\gamma+|y|^\gamma)[e^{a|v|^\alpha}+e^{a|y|^\alpha}] f(dy)R(dv,dw)\\
\leq & C  e^{-a(M/16)^{\alpha/\gamma}}
\int_{\rd\times\rd} \int_{\rd} (1+|w|^2)[e^{2a|v|^\alpha}+e^{2a|y|^\alpha}] f(dy)R(dv,dw) \\
\leq&  C  e^{-a(M/16)^{\alpha/\gamma}} \int_{\rd\times\rd} \int_{\rd} \Big(1+|w|^{2+\gamma} 
+e^{\frac{4+2\gamma}{1+\gamma}a |v|^\alpha}+e^{\frac{4+2\gamma}{1+\gamma}a|y|^\alpha}\Big) 
f(dy)R(dv,dw) 
\end{align*}
by the Young inequality. Choosing $a=(1+\gamma)/(4+2\gamma)$, setting
$\kappa=a/16^{\alpha/\gamma}$ and using that 
$R \in \cH(f,g)$, we conclude that
$$
I_M \leq C  e^{-\kappa M^{\alpha/\gamma}} (1+m_{2+\gamma}(g)+\cE_\alpha(f))
$$
as desired.
\end{proof}

\section{Well-posedness}\label{secwp}

We now have all the weapons to give the

\begin{proof}[Proof of Theorem \ref{mr1}]
We fix $\gamma \in [0,1]$. If $\gamma=0$, we assume that $f_0 \in \cP_2(\rd)$ and consider the
weak solution $(f_t)_{t\geq 0}$ to \eqref{HL3D} built in Proposition \ref{ex}, which indeed satisfies
all the properties of the statement. If $\gamma\in (0,1]$, we assume that $f_0 \in \cP_2(\rd)$ 
satisfies $\cE_{\alpha}(f_0)<\infty$ for some $\alpha \in (\gamma,2)$ and consider the
weak solution $(f_t)_{t\geq 0}$ to \eqref{HL3D} built in Proposition \ref{ex}, which also satisfies
all the properties of the statement. In particular,
$\sup_{t\geq 0} \cE_{\alpha}(f_t)<\infty$ depends only on $\gamma,\alpha$ and on 
(an upperbound of) $\cE_{\alpha}(f_0)$.
We consider another weak solution $(g_t)_{t\geq 0}$ to \eqref{HL3D}, only assumed
to lie in $L^\infty_{loc}([0,\infty),\cP_{2+\gamma}(\rd))$.

\vip

{\it Step 1.} We consider $V_0 \sim f_0$ and $W_0\sim g_0$ such that $\E[|V_0-W_0|^2]=\cW_2^2(f_0,g_0)$
and a 3D Brownian motion $(B_t)_{t\geq 0}$, independent of $(V_0,W_0)$.
We consider the pathwise unique solution to 
$$
V_t=V_0+\intot b(f_s,V_s)ds + \intot \raa(f_s,V_s)dB_s,
$$
see Proposition \ref{f3}, and we know that $V_t \sim f_t$ for all $t\geq 0$.
Next, we recall that the matrix $U_\e$ was introduced in Lemma \ref{tictactoc}-(ii)
and is locally Lipschitz continuous, so that it is not difficult to verify,
as in the proof of Proposition \ref{f3}-(i), that the SDE
(with stochastic parameter $(V_s)_{s\geq 0}$)
\begin{align}\label{relou}
W_t^\e = W_0 + \intot b(g_s,W_s^\e)ds 
+ \intot \raa(g_s,W_s^\e) U_\e(a(f_s,V_s),a(g_s,W_s^\e)) dB_s
\end{align}
has a pathwise unique local solution.
But the matrix $U_\e(a(f_s,V_s),a(g_s,W_s^\e))$ being a.s. orthogonal
for all $s\geq 0$, the process $B_t^\e=\intot U_\e(a(f_s,V_s),a(g_s,W_s^\e) dB_s$
is a 3D Brownian motion. We conclude that the SDE \eqref{relou} is, in law,
equivalent to to the SDE $W_t = W_0 + \intot b(g_s,W_s)ds 
+ \intot \raa(g_s,W_s)dB_s$. We know from Proposition \ref{f3}-(i) that such a process
does not explode in finite time, so that the unique solution to \eqref{relou} is global,
and from Proposition \ref{f3}-(ii) that $W_t^\e \sim g_t$ for all $t\geq 0$.
Consequently, we have $\cW_2^2(f_t,g_t) \leq \E[|V_t-W_t^\e|^2]$ for all values of $t\geq 0$ and 
$\e\in(0,1)$.

\vip

{\it Step 2.} We set $u^\e_t=\E[|V_t-W_t^\e|^2]$. 
Computing $|V_t-W_t^\e|^2$ with the It\^o formula, taking expectations and differentiating the 
obtained expression with respect to time, we find
\begin{align*}
\frac{d}{dt}u_t^\e= \E\Big[\|\raa(f_t,V_t)-\raa(g_t,W_t^\e)&
U_\e(a(f_t,V_t),a(g_t,W_t^\e))\|^2 \\
&+ 2(V_t-W_t^\e)\cdot(b(f_t,V_t)-b(g_t,W^\e_t))\Big].
\end{align*}
Denoting by $R^\e_t \in \cP_2(\rd\times\rd)$ the law of $(V_t,W^\e_t)$ and recalling the notation of
Proposition \ref{f4}, we realize that 
$(d/dt) u^\e_t = \Gamma_\e(R^\e_t)$.

\vip

Assume first that $\gamma=0$. Then Proposition \ref{f4}
tells us that 
$(d/dt) u_t^\e \leq C \sqrt \e (1+m_2(f_t+g_t))^{1/2}$.
Recalling that $f,g \in L^\infty_{loc}([0,\infty),\cP_2(\rd))$, that 
$\cW_2^2(f_t,g_t) \leq u^\e_t$ for all $t\geq 0$ and all $\e\in (0,1)$, and that
$\E[|V_0-W_0|^2]=\cW_2^2(f_0,g_0)$ by construction, we easily deduce that
$\cW_2^2(f_t,g_t)\leq \cW_2^2(f_0,g_0)$ for all $t\geq 0$. Of course, the uniqueness of the weak solution
starting from $f_0$ follows.

\vip

When $\gamma \in (0,1]$, we work on $[0,T]$ for some fixed $T>0$. By Proposition \ref{f4}, 
for all $M>0$,
$$
\frac{d}{dt} u^\e_t\leq C \sqrt \e (1+m_{2+\gamma}(f_t+g_t))^{1/2}
+ M u^\e_t + C (1+m_{2+\gamma}(g_t)+\cE_{\alpha}(f_t))e^{-\kappa M^{\alpha/\gamma}}.
$$
For the rest of the step, we call $C_T$ a constant, allowed to vary from line to line,
depending only  $T,\alpha,\gamma$ 
and on (some upperbounds) of $\sup_{[0,T]} m_{2+\gamma}(g_t)$ and $\cE_\alpha(f_0)$.
We thus have
$$
\frac{d}{dt} u^\e_t\leq C_T \sqrt \e+ M u^\e_t + C_T e^{-\kappa M^{\alpha/\gamma}},
$$
whence $\sup_{[0,T]} u^\e_t \leq [u_0^\e +C_T \sqrt \e+ C_T e^{-\kappa M^{\alpha/\gamma}} ]e^{MT}$. Recalling
that $u_0^\e=\cW_2^2(f_0,g_0)$ and that $\cW_2^2(f_t,g_t) \leq u^\e_t$, we may let $\e\to 0$ and find that
$$
\sup_{[0,T]} \cW_2^2(f_t,g_t) \leq [\cW_2^2(f_0,g_0)+ C_T e^{-\kappa M^{\alpha/\gamma}} ]e^{MT}.
$$
We now choose $M=[\kappa^{-1}\log(1+1/\cW_2^2(f_0,g_0))]^{\gamma/\alpha}$, which is designed to satisfy
$e^{-\kappa M^{\alpha/\gamma}}=\cW_2^2(f_0,g_0)/(1+\cW_2^2(f_0,g_0))\leq \cW_2^2(f_0,g_0)$ and we end with
$$
\sup_{[0,T]} \cW_2^2(f_t,g_t) \leq C_T 
\cW_2^2(f_0,g_0)\exp(T(\kappa^{-1}\log(1+1/\cW_2^2(f_0,g_0))^{\gamma/\alpha}).
$$
We easily conclude, since $\alpha>\gamma$, 
that for any $\eta \in (0,1)$, $\sup_{[0,T]} \cW_2(f_t,g_t) \leq C_{\eta,T} (\cW_2(f_0,g_0))^{1-\eta}$,
the constant $C_{\eta,T}$ depending only on $\eta,T,\alpha$ and on (some upperbounds) of  
$\sup_{[0,T]} m_{2+\gamma}(g_t)$ and
$\cE_\alpha(f_0)$.
The uniqueness of the weak solution $(f_t)_{t\geq 0}$ starting from $f_0$ clearly follows.
\end{proof}

\section{Moments of the particle system}\label{secmps}

The goal of this section is to study the moments of the particle system.
The following result uses the fact that the particle system
a.s. conserves kinetic energy. Sznitman \cite{s} and Mischler-Mouhot \cite{mm}
have handled similar computations for the Boltzmann equation for hard spheres.

\begin{prop}\label{mps}
Fix $\gamma\in [0,1]$, $N\geq 2$, consider an exchangeable  
$(\rd)^N$-valued random variable $(V^{i,N}_0)_{i=1,\dots,N}$ and the corresponding 
unique solution $(V^{i,N}_t)_{t\geq 0}$ to \eqref{ps}.
Then for all $p > 2$,
$\sup_{[0,\infty)}\E[|V^{1,N}_t|^p]\leq C_p (\E[|V^{1,N}_0|^{p+\gamma}])^{p/(p+\gamma)}$, the constant
$C_p$ depending only on $p$ and $\gamma$.
\end{prop}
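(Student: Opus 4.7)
The plan is to derive, via It\^o's formula applied to $v \mapsto |v|^p$ and the a.s.\ energy conservation of Proposition~\ref{pswp}, a closed differential inequality of the form $F_p'(t) \leq -\kappa_p F_{p+\gamma}(t) + C_p \mathcal M$, where $F_p(t) := \E[|V^{1,N}_t|^p]$ and $\mathcal M := \E[|V^{1,N}_0|^{p+\gamma}]$. Once it is in hand, the power-mean inequality $F_{p+\gamma}(t) \geq F_p(t)^{(p+\gamma)/p}$ applied to the law of $V^{1,N}_t$, together with $F_p(0) \leq \mathcal M^{p/(p+\gamma)}$ (Jensen), closes the estimate by a standard ODE comparison and gives $\sup_{t\geq 0} F_p(t) \leq C_p' \mathcal M^{p/(p+\gamma)}$.

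For the It\^o step I would localize using $\tau_k := \inf\{t : \max_i|V^{i,N}_t| \geq k\}$ (legitimate because $|V^{i,N}_t|^2 \leq E_0 := \sum_k |V^{k,N}_0|^2$ a.s.), and use $\sigma\sigma^* = a$ together with $B^{ij} = -B^{ji}$ to kill the martingale part in expectation. Expanding $D^2\phi_p(v) = p|v|^{p-2}\Id + p(p-2)|v|^{p-4}v\otimes v$ and using \eqref{sev}, a short computation shows that the drift plus the diagonal piece of the It\^o correction collapses (even without symmetrization in $i,j$) to
\[
-\frac{p}{N}\sum_{i,j}|V^i|^{p-2}|V^i-V^j|^\gamma(|V^i|^2 - |V^j|^2),
\]
while the rank-one piece yields the nonnegative remainder
\[
\mathcal R := \frac{p(p-2)}{2N}\sum_{i,j}|V^i|^{p-4}|V^i-V^j|^\gamma\big(|V^i|^2|V^j|^2 - (V^i\cdot V^j)^2\big).
\]

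To extract dissipation from the first display, I would restrict to pairs with $|V^j|\leq|V^i|/2$, on which $|V^i-V^j|^\gamma \geq 2^{-\gamma}|V^i|^\gamma$ and $|V^i|^2-|V^j|^2 \geq \tfrac{3}{4}|V^i|^2$, so each summand is at least $c_p|V^i|^{p+\gamma}$. Chebyshev applied to $\sum_j |V^j_t|^2 = E_0$ forces at least $N/2$ indices $j$ to lie in this favorable set whenever $|V^i_t|^2 \geq 8E_0/N$; combining this with the Jensen bound $\E[E_0^{(p+\gamma)/2}] \leq N^{(p+\gamma)/2}\mathcal M$ yields a dissipative lower bound of the form $\kappa_p N F_{p+\gamma}(t) - C_p N \mathcal M$. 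To control $\mathcal R$, the cross-product identity $V^i \wedge V^j = V^j\wedge(V^i-V^j)$ gives
\[
|V^i|^2|V^j|^2 - (V^i\cdot V^j)^2 \leq (|V^i|\wedge|V^j|)^2|V^i-V^j|^2,
\]
which removes the $|V^i|^{p-4}$ singularity; a weighted Young inequality with exponents $2/(p+\gamma)$ and $(p-2+\gamma)/(p+\gamma)$, applied to the mixed initial/time-$t$ products $|V^k_0|^2|V^i_t|^{p-2+\gamma}$ that arise through $E_0$, then gives $\E[\mathcal R]/N \leq \delta F_{p+\gamma}(t) + C_\delta\mathcal M$ with $\delta$ small enough to be absorbed into the dissipation.

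The main obstacle is controlling $\mathcal R$ with a constant small enough to be absorbed: a naive estimate that ignores the $\sin^2\theta_{ij}$ content of $|V^i|^2|V^j|^2-(V^i\cdot V^j)^2$ produces a coefficient in front of $F_{p+\gamma}(t)$ that grows with $p$ and destroys the dissipation. The cross-product identity above, combined with the Young trick using $E_0$ to convert time-$t$ quantities into the pure initial moment $\mathcal M$, is what allows the bound to close uniformly in $N$. Energy conservation is used twice: once via Chebyshev to guarantee many small-$|V^j|$ indices for the dissipation, and once via Young to produce the $\mathcal M$-term in the upper bound.
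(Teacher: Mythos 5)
Your It\^o computation is correct: the drift plus the trace part of the second--order term does collapse, pair by pair, to $-\frac pN\sum_{i,j}|V^i|^{p-2}|V^i-V^j|^\gamma(|V^i|^2-|V^j|^2)$, the rank--one remainder $\mathcal R$ is as you write, and the final closure via $F_{p+\gamma}\geq F_p^{(p+\gamma)/p}$, Jensen at time $0$ and ODE comparison is exactly how the paper concludes. For $\gamma=0$ your scheme works (and in fact needs neither Chebyshev nor the wedge identity: bounding the rank--one bracket by $|V^i|^2|V^j|^2$ and summing $N^{-1}\sum_j|V^j_t|^2=E^N_0$ is the paper's Step 2). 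For $\gamma\in(0,1]$ there is first a repair to make: the unsymmetrized display has sign--indefinite summands (pairs with $|V^j|>|V^i|$ enter with the wrong sign), so you may not lower--bound it by its restriction to the favorable pairs $|V^j|\leq|V^i|/2$; you must symmetrize first and work with $\frac p{2N}\sum_{i,j}|V^i-V^j|^\gamma(|V^i|^2-|V^j|^2)(|V^i|^{p-2}-|V^j|^{p-2})$, whose summands are all nonnegative. After that, your Chebyshev/energy extraction of a dissipation $\kappa_p N F_{p+\gamma}-C_pN\mathcal M$ is fine (note $\kappa_p$ comes out of order $p(p-2)2^{-\gamma}$, degenerating as $p\downarrow 2$), and it is a genuinely different substitute for the paper's key tool.

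The genuine gap is in the control of $\mathcal R$ when $\gamma>0$. Your wedge bound plus ``Young through $E_0$'' only treats the pairs on which the squared factor sits on the \emph{smaller} particle, i.e.\ $|V^j_t|\leq|V^i_t|$ (and, via $|V^i|^{p-4}\leq|V^j|^{p-4}$, the case $p\geq4$). On the remaining pairs, $|V^j_t|>|V^i_t|$ with $2<p<4$, the wedge bound yields $|V^i_t|^{p-2}|V^i_t-V^j_t|^{2+\gamma}\leq 2^{2+\gamma}|V^i_t|^{p-2}|V^j_t|^{2+\gamma}$: the exponent $2+\gamma>2$ now sits on the \emph{larger} particle, so it cannot be routed through the conserved energy (writing $|V^j_t|^{2+\gamma}\leq|V^j_t|^2E_0^{\gamma/2}$ costs a factor $N^{\gamma/2}$ and destroys uniformity in $N$), while any pointwise split $\leq A|V^i|^{p+\gamma}+B|V^j|^{p+\gamma}$ forces $A+B\geq1$, leaving a coefficient of order $p(p-2)$ (times a $\gamma$-dependent factor) in front of $F_{p+\gamma}(t)$ that is not shown to be below your $\kappa_p$. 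Note also that no per--pair absorption into the dissipation is possible: for $|V^i_t|\approx|V^j_t|$ large, the symmetrized dissipative summand vanishes while the remainder summand is of size $|V^i_t|^{p+\gamma}$, so any absorption must use that, by energy conservation, such comparable large partners are few --- an argument (e.g.\ a thermal threshold $|V^i_t|^2\leq\Lambda E_0/N$ together with a Chebyshev count of partners exceeding it) which your sketch does not contain. The paper sidesteps all of this: it bounds the rank--one bracket crudely by $|V^1|^{p-2}|V^2|^2$, symmetrizes, and invokes the elementary two--variable inequality of Desvillettes and Villani \cite[Lemma 1]{dv}, $-x^p-y^p+\frac p2x^2y^{p-2}+\frac p2y^2x^{p-2}\leq-\kappa_px^p+C_p(xy^{p-1}+yx^{p-1})$, which guarantees that the surviving cross terms carry exponent at most $\gamma\leq2$ on one of the two particles after a weighted AM--GM; exchangeability, $E^N_t=E^N_0$ and H\"older then close the estimate for every $p>2$. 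You should either import that inequality or supply the missing partner--counting argument for the $|V^j_t|>|V^i_t|$ pairs.
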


\begin{proof}
We fix $N\geq 2$ and write $V^{i}_t=V^{i,N}_t$ for simplicity.
We recall from Proposition \ref{pswp} that a.s., for all $t\geq 0$,
$E_t^N:=N^{-1} \sum_1^N|V^{i}_t|^2=E_0^N$. We fix $p>2$ and we set $u^p_t=\E[|V^1_t|^p]$.

\vip

{\it Step 1.}
Starting from \eqref{ps} and applying the It\^o formula
with $\phi(v)=|v|^p$, for which $\partial_k \phi(v)=p|v|^{p-2}v_k$ and 
$\partial_{kl}\phi(v)=p[\indiq_{\{k=l}|v|^{p-2}+(p-2)v_kv_l |v|^{p-4}]$,
we get
\begin{align*}
\frac d{dt}u^p_t =& \frac p {2N} \sum_{j=1}^N \E\Big[2 |V^1_t|^{p-2}V^1_t\cdot b(V^1_t-V^j_t)\\
&\hskip3cm +|V^1_t|^{p-2} \Tr\; a(V^1_t-V^j_t) + (p-2)|V^1_t|^{p-4}a(V^1_t-V^j_t)V^1_t\cdot V^1_t
\Big].
\end{align*}
Recalling \eqref{sev}, using exchangeability
and that everything vanishes when $j=1$, we find
\begin{align*}
\frac d{dt}u^p_t 
=&\frac {p(N-1)} {2N} \E\Big[|V^1_t-V^2_t|^\gamma\Big(-2|V^1_t|^p +2|V^1_t|^{p-2}|V^2_t|^2 \\
&\hskip3cm+(p-2)|V^1_t|^{p-4}
(|V^1_t|^2|V^2_t|^2 -(V^1_t\cdot V^2_t)^2 \Big) \Big]\\
\leq & \frac {p(N-1)} N \E\Big[|V^1_t-V^2_t|^\gamma\Big(-|V^1_t|^p+ \frac p 2 |V^1_t|^{p-2}|V^2_t|^2\Big)\Big].
\end{align*}

{\it Step 2.} When $\gamma=0$, we thus have $(d/dt)u^p_t \leq -(p/2)u^p_t + (p^2/2)\E[|V^1_t|^{p-2}|V^2_t|^2]$.
We then use exchangeability to write
$$
\E[|V^1_t|^{p-2}|V^2_t|^2]=\E\Big[|V^1_t|^{p-2} \frac1{N-1}\sum_2^N |V^i_t|^2\Big]
\leq 2 \E[|V^1_t|^{p-2} E^N_t]\leq 2 (u_t^p)^{(p-2)/p}\E[(E^N_t)^{p/2}]^{2/p}.
$$
But $\E[(E^N_t)^{p/2}]=\E[(E^N_0)^{p/2}]\leq \E[|V^1_0|^{p}]$ by Jensen's inequality and exchangeability.
We end with
$$
\frac d{dt} u_t^{p} \leq -\frac p 2 u^{p}_t + p^2  \E[|V^1_0|^{p}]^{2/p}[u^{p}_t]^{1-2/p}.
$$
We classically conclude that $\sup_{[0,\infty)} u^p_t \leq \max\{u^p_0,\E[|V^1_0|^{p}](2p)^{p/2}\}
=\E[|V^1_0|^{p}](2p)^{p/2}$.

\vip

{\it Step 3.} We suppose next that $\gamma\in(0,1]$. We know from Desvillettes and Villani \cite[Lemma 1]{dv}
that there are some constants $\kappa_p>0$ and $C_p$ such that for all $x,y \geq 0$, 
$$
-x^p-y^p+ \frac p2 x^2 y^{p-2} + \frac p 2 y^{2}x^{p-2} \leq -\kappa_p x^p + C_p(x y^{p-1} + y x^{p-1}).
$$
We deduce, using exchangeability, that
\begin{align*}
\frac d{dt}u^p_t
\leq & \frac {p(N-1)} {2N} \E\Big[|V^1_t-V^2_t|^\gamma\Big(-|V^1_t|^p-|V^2_t|^p + \frac p 2 |V^1_t|^{2}|V^2_t|^{p-2}
+ \frac p 2 |V^2_t|^{2}|V^1_t|^{p-2}\Big)\Big]\\
\leq & \frac {p(N-1)} {2N}\E\Big[|V^1_t-V^2_t|^\gamma\Big(-\kappa_p |V^1_t|^p +C_p |V^1_t||V^2_t|^{p-1}
+C_p|V^2_t||V^1_t|^{p-1}\Big)\Big]\\
\leq & \E\Big[|V^1_t-V^2_t|^\gamma\Big(-\frac{p\kappa_p}4 |V^1_t|^p +2 p C_p |V^1_t||V^2_t|^{p-1} \Big)\Big].
\end{align*}
Changing now the values of $\kappa_p>0$ and $C_p$ (which still depend only on $p$)
and using that $|v-w|^\gamma \geq \big||v|-|w|\big|^\gamma \geq |v|^\gamma - |w|^\gamma$
and $|v-w|^\gamma\leq |v|^\gamma+|w|^\gamma$, we easily find
\begin{align*}
\frac d{dt}u^p_t
\leq & - \kappa_p \E[|V^1_t|^{p+\gamma}] + 
C_p\E[|V^1_t|^p |V^2_t|^\gamma + |V^1_t|^{1+\gamma}|V^2_t|^{p-1}+|V^1_t||V^2_t|^{p-1+\gamma}]\\
\leq & - \kappa_p \E[|V^1_t|^{p+\gamma}] + 
C_p\E[|V^1_t|^p |V^2_t|^\gamma + |V^1_t|^\gamma|V^2_t|^{p}].
\end{align*}
But 
\begin{align*}
\E[|V^1_t|^p |V^2_t|^\gamma]=\E\Big[|V^1_t|^p \frac1{N-1}\sum_2^N |V^i_t|^\gamma\Big]
\leq 2 \E\Big[|V^1_t|^p  \frac 1 N \sum_1^N |V^i_t|^\gamma\Big]\leq 2 \E\Big[|V^1_t|^p (E^N_t)^{\gamma/2}\Big].
\end{align*}
By H\"older's inequality and since $E^N_t=E^N_0$, we deduce that
$$
\E[|V^1_t|^p |V^2_t|^\gamma]\leq 2\E[|V^1_t|^{p+\gamma}]^{p/(p+\gamma)}\E[(E^N_0)^{(p+\gamma)/2}]^{\gamma/(p+\gamma)}.
$$
A last application of H\"older's inequality shows that $\E[(E^N_0)^{(p+\gamma)/2}]\leq \E[|V^{1,N}_0|^{p+\gamma}]$,
whence finally
\begin{align}\label{jab}
\frac d{dt}u^p_t
\leq & - \kappa_p \E[|V^1_t|^{p+\gamma}] + C_p\E[|V^{1,N}_0|^{p+\gamma}]^{\gamma/(p+\gamma)}
\E[|V^1_t|^{p+\gamma}]^{p/(p+\gamma)} \nonumber \\
\leq & - \frac{\kappa_p}2 \E[|V^1_t|^{p+\gamma}] + C_p \E[|V^{1,N}_0|^{p+\gamma}] \nonumber \\
\leq & - \frac{\kappa_p}2 (u_t^p)^{(p+\gamma)/p} + C_p \E[|V^{1,N}_0|^{p+\gamma}],
\end{align}
the value of $C_p$ depending only on $p,\gamma$ and changing from line to line.
For the second inequality, we used that for $\kappa,a,x\geq 0$, $-\kappa x + a x^{p/(p+\gamma)}
\leq -(\kappa/2)x + (2/\kappa)^{p/\gamma} a^{(p+\gamma)/\gamma}$: it suffices to separate the cases
$\kappa x \geq  2 a x^{p/(p+\gamma)}$ and $\kappa x \leq  2a x^{p/(p+\gamma)}$.
We classically deduce from \eqref{jab} that 
$\sup_{[0,\infty)} u^p_t \leq \max\{u^p_0,(2C_p \E[|V^{1,N}_0|^{p+\gamma}]/\kappa_p)^{p/(p+\gamma)}\}$. Since 
$u^p_0=\E[|V^{1,N}_0|^{p}]\leq \E[|V^{1,N}_0|^{p+\gamma}]^{p/(p+\gamma)}$, the proof is complete.
\end{proof}

\section{Propagation of chaos}\label{secps}
The goal of this section is to check Theorem \ref{mr2}, except the time uniformity
in the Maxwell case.

\subsection{The setting}\label{set}
We consider, 
in the whole section, $\gamma\in [0,1]$ fixed and $f_0 \in \cP_2(\rd)$.
If $\gamma \in (0,1]$, we assume moreover that $\cE_{\alpha}(f_0)<\infty$ for some $\alpha \in(\gamma,2)$.
We denote by $(f_t)_{t\geq 0}$ the unique solution to \eqref{HL3D}, as well as,
for each $N\geq 2$, the unique solution $(V_t^{i,N})_{i=1,\dots,N,t\geq 0}$ to \eqref{ps}
starting from a given exchangeable $(\rd)^N$-valued $(V_0^{i,N})_{i=1,\dots,N}$. We suppose that 
$M_p:=m_p(f_0)+\sup_N \E[|V^{1,N}_0|^p] <\infty$ and we conclude from Theorem \ref{mr1} and 
Proposition \ref{mps} that for all $p\geq 2$,
$\sup_{[0,\infty)}m_p(f_t) + \sup_{N\geq 2} \sup_{[0,\infty)}\E[|V^{1,N}_t|^p]<\infty$
and depends only on $\gamma,p$ and on some (upperbound of) $M_{p+\gamma}$.
If $\gamma \in (0,1]$, we know that $\sup_{t\geq 0}\cE_{\alpha}(f_t)<\infty$.
If finally $H(f_0)<\infty$, 
then  $H(f_t)\leq H(f_0)$ for all $t\geq 0$, whence, by 
Lemma \ref{ellip1}, 
\begin{equation}\label{elli}
\sup_{t\geq 0} \sup_{v \in \rd} ||(a(f_t,v))^{-1}|| <\infty
\end{equation}
and depends only on $\gamma$ and on (upperbounds of) $m_2(f_0)$ and $H(f_0)$.

\vip

In the whole section, we write $C$ for a constant depending only on $\gamma$, $\alpha$, on (upperbounds
of) $\{M_p,p\geq 2\}$ and additionally on (an upperbound of) $\cE_{\alpha}(f_0)$ if $\gamma\in (0,1]$.
It is also allowed to depend on (an upperbound of) $H(f_0)$ when the latter is supposed to be finished.
Finally, any other dependence will be indicated in subscript.

\vip

We fix $N\geq 2$ for the whole section, we recall that 
$\mu^N_t=N^{-1}\sum_1^N \delta_{V^{i,N}_t}$ and we put $\e_N=N^{-1}$.
By \cite[Proposition A.1]{fh}, we can find $(W_0^{i,N})_{i=1,\dots,N} \sim f_0^{\otimes N}$ such that

\vip

(a) $\{(V^{i,N}_0,W^{i,N}_0), i=1,\dots ,N\}$ is exchangeable, 

\vip

(b) $\cW_2^2(N^{-1}\sum_1^N \delta_{V^{i,N}_0},
N^{-1}\sum_1^N \delta_{W^{i,N}_0})=N^{-1}\sum_1^N |V^{i,N}_0-W^{i,N}_0|^2$ a.s., 

\vip

(c) denoting by $F_N$ the law of $(V_0^{i,N})_{i=1,\dots,N}$, 
$\cW_2^2(F_N,f_0^{\otimes N})=\E[\sum_1^N|V^{i,N}_0-W^{i,N}_0|^2 ]$.

\subsection{A first coupling}

We first rewrite suitably the particle system.

\begin{lem}\label{reps}
For each $i=1,\dots,N$, the process
$$
\beta^{i,N}_t=\frac 1 {\sqrt N}\sum_{j=1}^N\intot [\raa(\mu^N_s,V^{i,N}_s)]^{-1}
\sigma(V^{i,N}_s-V^{j,N}_s)dB^{ij}_s
$$
is a 3D Brownian motion.
Furthermore, for all $i=1,\dots,N$, all $t\geq 0$, 
$$
V^{i,N}_t=V^{i}_0+\intot b(\mu^N_s,V^{i,N}_s)ds + \intot \raa(\mu^N_s,V^{i,N}_s) 
d\beta^{i,N}_s.
$$
\end{lem}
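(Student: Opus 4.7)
The plan is to apply L\'evy's characterization of Brownian motion to $\beta^{i,N}$ and then verify the rewritten SDE by direct substitution. Since $\beta^{i,N}_0=0$ and $\beta^{i,N}$ is a continuous local martingale by construction, I only need to compute its quadratic covariation matrix and check that it equals $t\Id$.

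For the covariation, the conventions $B^{ji}=-B^{ij}$ and $B^{ii}=0$ together with the independence of the family $(B^{ij})_{1\leq i<j\leq N}$ give $d\langle B^{ij},(B^{ik})^*\rangle_s=\delta_{jk}\Id\,ds$ for $j,k\neq i$. Abbreviating $\raa_s=\raa(\mu^N_s,V^{i,N}_s)$ and $a_s=a(\mu^N_s,V^{i,N}_s)$ and using both $\sigma(v)\sigma^*(v)=a(v)$ and the definition of $a(f,v)$,
\begin{align*}
d\langle \beta^{i,N},(\beta^{i,N})^*\rangle_s
&= \frac 1 N \sum_{j=1}^N \raa_s^{-1}\sigma(V^{i,N}_s-V^{j,N}_s)\sigma(V^{i,N}_s-V^{j,N}_s)^*\raa_s^{-1}\,ds\\
&= \raa_s^{-1}\Big(\frac 1 N \sum_{j=1}^N a(V^{i,N}_s-V^{j,N}_s)\Big)\raa_s^{-1}\,ds
= \raa_s^{-1}a_s\raa_s^{-1}\,ds = \Id\,ds,
\end{align*}
using $\raa_s\cdot\raa_s=a_s$. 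L\'evy's theorem then yields that $\beta^{i,N}$ is a 3D Brownian motion.

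For the second assertion, the drift in \eqref{ps} is $\frac 1 N \sum_j \intot b(V^{i,N}_s-V^{j,N}_s)\,ds=\intot b(\mu^N_s,V^{i,N}_s)\,ds$ by the definition of $b(f,v)$, and
$$
\intot \raa_s\,d\beta^{i,N}_s = \frac 1 {\sqrt N}\sum_j \intot \raa_s\raa_s^{-1}\sigma(V^{i,N}_s-V^{j,N}_s)\,dB^{ij}_s = \frac 1 {\sqrt N}\sum_j \intot \sigma(V^{i,N}_s-V^{j,N}_s)\,dB^{ij}_s,
$$
recovering the diffusion of \eqref{ps}.

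The main obstacle I anticipate is the possible non-invertibility of $\raa_s$: its null space is exactly the set of common directions shared by all non-zero displacements $V^{i,N}_s-V^{j,N}_s$, which is nontrivial for instance when the particles happen to be collinear. I would handle this by reading $[\raa_s]^{-1}$ as the Moore--Penrose pseudo-inverse. Since the range of $\sigma(V^{i,N}_s-V^{j,N}_s)$ equals $(V^{i,N}_s-V^{j,N}_s)^\perp$ and lies in the range of $\raa_s$, the relation $\raa_s[\raa_s]^+\sigma(V^{i,N}_s-V^{j,N}_s)=\sigma(V^{i,N}_s-V^{j,N}_s)$ still holds so that the SDE identity is preserved; and to obtain a bona fide 3D Brownian motion in the L\'evy step, one supplements $\beta^{i,N}$ with an independent Brownian motion in the null space of $\raa_s$, which plays no role in the diffusion.
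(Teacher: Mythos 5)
Your proof is correct and follows the same route as the paper: compute the matrix bracket of $\beta^{i,N}$ using the independence of $(B^{ij})_{j\neq i}$ together with $\sigma\sigma^*=a$, invoke L\'evy's characterization, and recover \eqref{ps} by multiplying by $\raa(\mu^N_s,V^{i,N}_s)$. The only place where you genuinely deviate is the treatment of the degenerate case, and it is worth comparing the two devices. The paper does not use the Moore--Penrose pseudo-inverse: in Remark \ref{14} it \emph{defines} $[\raa(\mu^N_s,V^{i,N}_s)]^{-1}\sigma(V^{i,N}_s-V^{j,N}_s)$ through Lemma \ref{matop}, i.e.\ as the pseudo-inverse applied to $\sigma(V^{i,N}_s-V^{j,N}_s)$ \emph{plus} the orthogonal projector onto the null space of $a(\mu^N_s,V^{i,N}_s)$ (which annihilates each $\sigma(V^{i,N}_s-V^{j,N}_s)$, so property (a) is unaffected); with this convention the exact identity $N^{-1}\sum_j([\raa]^{-1}\sigma_j)([\raa]^{-1}\sigma_j)^*=\Id$ of Remark \ref{14}-(ii) holds even at degenerate configurations, the missing noise in the null direction being supplied by the same Brownian motions $B^{ij}$, so no auxiliary randomness is needed and $\beta^{i,N}$ as written is already a Brownian motion. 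Your alternative — pure pseudo-inverse, whose bracket only gives the projector onto the range, supplemented by an independent Brownian motion in the null space, which is killed by $\raa$ in the SDE — is also legitimate and proves the lemma (you correctly note that the range of $\sigma(V^{i,N}_s-V^{j,N}_s)$ is contained in the range of $\raa(\mu^N_s,V^{i,N}_s)$, so the substitution step survives). The paper's convention has the practical advantage that the exact identity of Remark \ref{14}-(ii) is reused later verbatim (e.g.\ in the proof of Lemma \ref{lf}, both to show that the $\tbeta^{i,N}$ are Brownian motions and to get the a.s.\ bound $\|[\raa(\mu^N_t,V^{1,N}_t)]^{-1}\sigma(V^{1,N}_t-V^{2,N}_t)\|^2\leq N$), so if you adopt the pseudo-inverse reading you would have to carry your auxiliary noise and the range/null-space bookkeeping through those arguments as well.
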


\begin{rk}\label{14}
Observe that 
$\raa(\mu^N_s,V^{i,N}_s)=[N^{-1}\sum_{j=1}^N a(V^{i,N}_s-V^{j,N}_s)]^{1/2}$
with $a(x)=[\sigma(x)]^2$.
If $\raa(\mu^N_s,V^{i,N}_s)$ is not invertible, we use Lemma \ref{matop} to define
$[\raa(\mu^N_s,V^{i,N}_s)]^{-1}\sigma(V^{i,N}_s-V^{j,N}_s)$.
We thus always have 

\vip

(i) for all $j=1,\dots, N$, 
$\raa(\mu^N_s,V^{i,N}_s)[\raa(\mu^N_s,V^{i,N}_s)]^{-1}\sigma(V^{i,N}_s-V^{j,N}_s)=\sigma(V^{i,N}_s-V^{j,N}_s)$;

\vip

(ii) $N^{-1}\sum_{j=1}^N ([\raa(\mu^N_s,V^{i,N}_s)]^{-1}\sigma(V^{i,N}_s-V^{j,N}_s))
([\raa(\mu^N_s,V^{i,N}_s)]^{-1}\sigma(V^{i,N}_s-V^{j,N}_s))^*=\Id$.
\end{rk}

\begin{lem}\label{matop}
For $A_1,\dots,A_N \in S_3^+$ and $M=N^{-1}\sum_1^NA_j^2$, we can find some matrices $B_1,\dots,B_N$
such that (a) $M^{1/2}B_j=A_j$ for all $j=1,\dots,N$ and (b) $N^{-1}\sum_1^N B_jB_j^* = \Id$.
We write $B_j=M^{-1/2}A_j$, even in the case where $M$ is not invertible.
\end{lem}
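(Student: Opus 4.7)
The plan is to define $M^{-1/2}$ as the Moore--Penrose pseudoinverse of $M^{1/2}$, equivalently, as the linear map vanishing on $\ker M$ and coinciding with the inverse of $M^{1/2}|_{(\ker M)^\perp}$ on $(\ker M)^\perp$, and then to set
$$
B_j := M^{-1/2} A_j + Q,
$$
where $Q$ denotes the orthogonal projection onto $\ker M$. Writing $P = \Id - Q$ for the orthogonal projection onto $(\ker M)^\perp = \mathrm{Range}(M) = \mathrm{Range}(M^{1/2})$, this choice reduces to $B_j = M^{-1/2}A_j$ when $M$ is invertible (in which case $Q = 0$), consistent with the notational convention in the statement.

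The crucial preliminary observation is that $\ker M \subset \ker A_j$ for every $j$. Indeed, if $Mv = 0$, then using $A_j^* = A_j$ one has
$$
0 = (Mv)\cdot v = \frac{1}{N}\sum_{j=1}^N (A_j^2 v)\cdot v = \frac{1}{N}\sum_{j=1}^N |A_j v|^2,
$$
which forces $A_j v = 0$ for every $j$. From this inclusion and the symmetry of the $A_j$, I would deduce the algebraic identities $A_j Q = Q A_j = 0$ and $P A_j = A_j P = A_j$, as well as $M^{1/2} M^{-1/2} = M^{-1/2} M^{1/2} = P$ and $M^{1/2} Q = 0$, which are the only facts needed.

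Property (a) is then immediate: $M^{1/2} B_j = M^{1/2} M^{-1/2} A_j + M^{1/2} Q = P A_j + 0 = A_j$. For (b), I would expand
$$
B_j B_j^* = M^{-1/2} A_j^2 M^{-1/2} + M^{-1/2} A_j Q + Q A_j M^{-1/2} + Q^2.
$$
The two cross terms vanish by $A_j Q = Q A_j = 0$, and averaging the remaining pieces over $j$ yields
$$
\frac{1}{N}\sum_{j=1}^N B_j B_j^* = M^{-1/2} M M^{-1/2} + Q = P + Q = \Id,
$$
as required.

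The only real subtlety is the bookkeeping around the pseudoinverse: one needs to verify that the extra summand $Q$ exactly restores the missing contribution to $\Id$ in the directions along $\ker M$ without perturbing property (a). Both safeguards are ensured by the kernel inclusion $\ker M \subset \ker A_j$, so no further analytic input is required.
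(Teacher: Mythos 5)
Your proof is correct and is essentially the same construction as the paper's: the paper treats the possible ranks of $M$ case by case in an eigenbasis and sets $B_j=(\lambda_1^{-1/2}e_1e_1^*+\lambda_2^{-1/2}e_2e_2^*)A_j+e_3e_3^*$, which is exactly your $B_j=M^{-1/2}A_j+Q$ with the pseudoinverse and the projection $Q$ onto $\ker M$. Your basis-free presentation is a clean uniform way of writing the same matrices, and you additionally spell out the key inclusion $\ker M\subset\ker A_j$ that the paper only asserts.
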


\begin{proof}
If $M$ is invertible, it suffices to set $B_j=M^{-1/2}A_j$. If $M=0$, the choice $B_j=\Id$ is suitable.
Assume now that $M$ has exactly two non-trivial eigenvalues $\lambda_1,\lambda_2>0$ (the last case where
$M$ has exactly one non-trivial eigenvalue is treated similarly). Consider an orthonormal basis
$e_1,e_2,e_3$ of eigenvectors, that is, $Me_1=\lambda_1 e_1$, $Me_2=\lambda_2 e_2$ and $Me_3=0$
(so that $M=\lambda_1e_1e_1^*+\lambda_2e_2e_2^*)$
and observe that $A_je_3=0$ for all $j$. It then suffices to set 
$B_j=(\lambda_1^{-1/2}e_1e_1^*+\lambda_2^{-1/2}e_2e_2^*)A_j + e_3 e_3^*$.
\end{proof}

We can now give the

\begin{proof}[Proof of Lemma \ref{reps}]
For $i$ fixed, the Brownian motions $(B^{ij})_{j \ne i}$ are independent. Hence
the (matrix) bracket of the 3D martingale $(\beta^{i,N}_t)_{t\geq 0}$
is given by (recall that $\sigma(0)=0$)
$$
\frac 1 N \sum_{j=1}^N \intot 
\Big([\raa(\mu^N_s,V^{i,N}_s)]^{-1} \sigma(V^{i,N}_s-V^{j,N}_s)\Big)
\Big([\raa(\mu^N_s,V^{i,N}_s)]^{-1} \sigma(V^{i,N}_s-V^{j,N}_s)\Big)^*  ds
=\Id t,
$$
which implies that $(\beta^{i,N}_t)_{t\geq 0}$ is a Brownian motion. We used Remark \ref{14}-(ii).
Rewriting \eqref{ps} as in the statement is straightforward, using that
$\raa(\mu^N_s,V^{i,N}_s)[\raa(\mu^N_s,V^{i,N}_s)]^{-1} 
\sigma(V^{i,N}_s-V^{j,N}_s)=\sigma(V^{i,N}_s-V^{j,N}_s) $ by Remark \ref{14}-(i).
\end{proof}

We next introduce a (non-independent) family of Landau processes. Recall that
the matrix $U$ was introduced in \eqref{dfU},  that $U_\e$ was defined in Lemma \ref{tictactoc}-(ii). Denote $\e_N=N^{-1}$.

\begin{lem}\label{treschiant}
The system of equations (for $i=1,\dots,N$)
$$
W^{i,N}_t=W^{i,N}_0+\intot b(f_s,W^{i,N}_s)ds + \intot \raa(f_s,W^{i,N}_s)U_{\e_N}(a(\mu^{N}_s,V^{i,N}_s),
a(\nu^{N}_s,W^{i,N}_s)) d\beta^{i,N}_s,
$$
with $\nu^{N}_t=N^{-1}\sum_1^N \delta_{W^{i,N}_t}$,
has a pathwise unique solution. Furthermore, $W^{1,N}_t \sim f_t$ for all $t\geq 0$ and the family 
$\{(V^{i,N}_t,W^{i,N}_t)_{t\geq 0}, i=1, \dots , N\}$ is exchangeable.
\end{lem}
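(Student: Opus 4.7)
I would proceed in four stages: (i) local well-posedness from the Lipschitz regularity of all coefficients, (ii) global existence via moment bounds that exploit the orthogonality of $U_{\e_N}$, (iii) the marginal identity $\cL(W^{i,N}_t)=f_t$ via L\'evy's characterization of Brownian motion combined with the uniqueness of the linear Fokker--Planck equation \eqref{pdelin} established in Proposition \ref{f3}(ii), and (iv) exchangeability via pathwise uniqueness and the symmetries of the construction. For (i), I would view the system as an SDE for $(W^{1,N},\ldots,W^{N,N})$ on $(\rd)^N$, driven by $(\beta^{i,N})_{i=1,\dots,N}$ and parametrised by the already-constructed processes $(V^{i,N})_i$: the coefficients are locally Lipschitz on $(\rd)^N$ thanks to Lemma \ref{f2} for $w_i\mapsto b(f_s,w_i)$ and $w_i\mapsto\raa(f_s,w_i)$, Lemma \ref{tictactoc}(iii) for $U_{\e_N}$, and Lemma \ref{f1} for $(w_1,\ldots,w_N)\mapsto a(\nu^N,w_i)=N^{-1}\sum_j a(w_i-w_j)$. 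Standard SDE theory then supplies a pathwise unique local solution up to some explosion time $\tau$. To extend it globally, I would use that $U_{\e_N}$ takes orthogonal-matrix values, so the effective diffusion covariance reduces to $a(f_s,W^{i,N}_s)$; applying It\^o to $|W^{i,N}_t|^2$ and using Lemma \ref{f2} together with $\sup_{[0,T]}m_{2+\gamma}(f_t)<\infty$ yields exactly the same linear Gronwall estimate as in the proof of Proposition \ref{f3}(i), whence $\tau=+\infty$ almost surely.

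For the marginal law (iii), I would fix $i$ and introduce
\[
\tilde B^{i,N}_t:=\int_0^t U_{\e_N}\bigl(a(\mu^N_s,V^{i,N}_s),a(\nu^N_s,W^{i,N}_s)\bigr)\,d\beta^{i,N}_s.
\]
Since the integrand is adapted and a.s. orthogonal, the matrix bracket of $\tilde B^{i,N}$ equals $t\,\Id$, so L\'evy's theorem identifies $\tilde B^{i,N}$ as a 3D Brownian motion. The SDE for $W^{i,N}$ then rewrites as $W^{i,N}_t=W^{i,N}_0+\intot b(f_s,W^{i,N}_s)\,ds+\intot \raa(f_s,W^{i,N}_s)\,d\tilde B^{i,N}_s$, and It\^o applied to $\varphi(W^{i,N}_t)$ for $\varphi\in C^2_c(\rd)$ (the local-martingale part being a true martingale because $\nabla \varphi$ has compact support and $\raa(f_s,\cdot)$ is continuous) shows upon taking expectations that $g^{i,N}_t:=\cL(W^{i,N}_t)$ satisfies \eqref{pdelin} with generator $\cA_s$ and $\mu=g^{i,N}_0=f_0$. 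Since $(f_t)$ solves the same equation, the Horowitz--Karandikar uniqueness invoked in the proof of Proposition \ref{f3}(ii) then forces $g^{i,N}_t=f_t$.

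Exchangeability would follow from pathwise uniqueness: for any permutation $\pi$ of $\{1,\ldots,N\}$, the tuple $(V^{\pi(i),N},W^{\pi(i),N},\beta^{\pi(i),N})_i$ solves the same system (the construction being permutation-symmetric, as $\mu^N$ and $\nu^N$ are empirical measures) with initial data $(V^{\pi(i),N}_0,W^{\pi(i),N}_0)_i$, which is equidistributed with $(V^{i,N}_0,W^{i,N}_0)_i$ by property (a) of Section \ref{set}, and with driving noise of the same joint law (since $(B^{ij})_{i<j}$ is i.i.d.). The main conceptual obstacle I anticipate is that $\tilde B^{i,N}$ is generally not independent of $W^{i,N}_0$ nor of the other components $W^{j,N}$, so classical strong or weak uniqueness for the Landau SDE cannot be invoked directly to identify the law of $W^{i,N}_t$. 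This is precisely why I would route the marginal identification through the linear Fokker--Planck equation \eqref{pdelin}, whose well-posedness is unconditional on such independence.
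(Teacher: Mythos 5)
Your proposal is correct and follows essentially the same route as the paper's proof: local pathwise uniqueness from the locally Lipschitz coefficients (Lemmas \ref{f2} and \ref{tictactoc}-(iii)), the orthogonality of $U_{\e_N}$ plus L\'evy's characterization to turn the driving noise into a 3D Brownian motion, non-explosion and the identification $W^{i,N}_t\sim f_t$ via the machinery of Proposition \ref{f3} (you inline its Gronwall estimate and its martingale-problem/Horowitz--Karandikar argument rather than citing the proposition through an equivalence-in-law step), and exchangeability from the exchangeability of the initial data and of the driving family. One small remark: the ``conceptual obstacle'' you flag is not actually there --- since $\tilde B^{i,N}$ is a Brownian motion for the full filtration, its increments are independent of $\mathcal{F}_0\supseteq\sigma(W^{i,N}_0)$, which is exactly what lets the paper invoke Proposition \ref{f3}-(i)-(ii) directly; your detour through \eqref{pdelin} is nevertheless valid (the genuine lack of independence is between the different $W^{j,N}$'s, which is irrelevant for the one-particle marginal and is handled later by Lemma \ref{lf}).
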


\begin{proof}
As usual, the existence of a pathwise unique local solution follows from the fact that
the coefficients are locally Lipschitz continuous (which follows from Lemmas \ref{f2} and 
Lemma \ref{tictactoc}-(iii)). But for each $i$, the matrix $U_{\e_N}(a(\mu^{N}_s,V^{i,N}_s),
a(\nu^{N}_s,W^{i,N}_s))$ being a.s. orthogonal
for all $s\geq 0$, the process $\intot U_{\e_N}(a(\mu^{N}_s,V^{i,N}_s),
a(\nu^{N}_s,W^{i,N}_s)) d\beta^{i,N}_s$ is a 3D Brownian motion. 
Consequently, the SDE satisfied by $W^{i,N}$ is
equivalent (in law) to the SDE $W_t = V_0 + \intot b(f_s,W_s)ds 
+ \intot \raa(f_s,W_s)dB_s$ (with $V_0\sim f_0$). We know from Proposition \ref{f3}-(i) that such a process
does not explode in finite time, so that the unique solution is global,
and from Proposition \ref{f3}-(ii) that $W_t^{i,N} \sim f_t$ for all $t\geq 0$.
Exchangeability is obvious, using that it holds true at time $0$
(see point (a) at the end of Subsection \ref{set}).
\end{proof}

\subsection{A second coupling}
Unfortunately, the processes $(W^{i,N}_t)_{t\geq 0}$ are not independent, so we have to show that they are
{\it almost} independent in some sense.

\begin{lem}\label{lf}
For all $K=1,\dots,N$, we can find an i.i.d. family of processes 
$(Z^{i,N,K}_t)_{i=1,\dots,K,t\geq 0}$ such that
$Z^{i,N,K}_t\sim f_t$ for all $t\geq 0$, all $i=1,\dots,K$
and such that for all $\eta \in (0,1)$, all $T>0$,
\begin{equation}\label{lef}
\sup_{i=1,\dots,K} \sup_{[0,T]} \E[|W^{i,N}_t-Z^{i,N,K}_t|^2] \leq C_{\eta,T} 
\frac K {N^{1-\eta}}.
\end{equation}
Moreover, the constant $C_{\eta,T}$ is of the form $C_\eta T$ if $\gamma=0$.
\end{lem}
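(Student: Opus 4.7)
The plan is to construct, on a suitably enlarged probability space, an i.i.d.\ family of 3D Brownian motions $(\hat B^{i,N,K})_{i=1,\ldots,K}$ that are pathwise $L^2$-close to the (correlated) Brownian motions $\tilde\beta^{i,N}_t := \int_0^t U_{\e_N}(a(\mu^N_s,V^{i,N}_s),a(\nu^N_s,W^{i,N}_s))\,d\beta^{i,N}_s$ effectively driving $W^{i,N}$, and then to define $Z^{i,N,K}$ as the nonlinear Landau process started at $W^{i,N}_0$ and driven by $\hat B^{i,N,K}$. By exchangeability it suffices to bound $\E[|W^{1,N}_t - Z^{1,N,K}_t|^2]$.

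For the Brownian coupling, split the sum defining $\beta^{i,N}$ in Lemma~\ref{reps} according to whether $j\in\{1,\ldots,K\}\setminus\{i\}$ (the ``shared'' part $\beta^{i,N,-}$) or $j>K$ (the ``private'' part $\beta^{i,N,+}$), and correspondingly set $\tilde\beta^{i,N,\pm}:=\int_0^{\cdot}U_{\e_N}(\cdot)\,d\beta^{i,N,\pm}$. By It\^o isometry and Remark~\ref{14}(ii), $\E[|\tilde\beta^{i,N,-}_t|^2]\leq CtK/N$, while for $i\ne i'$ in $\{1,\ldots,K\}$ the processes $\tilde\beta^{i,N,+}$ and $\tilde\beta^{i',N,+}$ are driven by disjoint families of independent $B^{\cdot\cdot}$'s and hence have zero cross-bracket. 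On an enlarged space, introduce i.i.d.\ 3D Brownian motions $(\xi^i)_{i\leq K}$ independent of everything else and set
\[\hat B^{i,N,K}_t := \tilde\beta^{i,N,+}_t + \int_0^t Q^i_s\,d\xi^i_s,\]
where $Q^i_s\in S_3^+$ is the predictable matrix chosen so that $d\langle\hat B^{i,N,K}\rangle_s=\Id\,ds$; such a $Q^i_s$ exists because $\Id-d\langle\tilde\beta^{i,N,+}\rangle_s/ds=U_{\e_N}(\cdot)R^i_s U_{\e_N}(\cdot)^*$, with $R^i_s=\tfrac{1}{N}\sum_{j\in\{1,\ldots,K\}\setminus\{i\}}[\raa(\mu^N_s,V^{i,N}_s)]^{-1}a(V^{i,N}_s-V^{j,N}_s)[\raa(\mu^N_s,V^{i,N}_s)]^{-1}\in S_3^+$ of operator norm $O(K/N)$. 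Then $\langle\hat B^{i,N,K},\hat B^{i',N,K}\rangle\equiv 0$ for $i\ne i'$, so by the multidimensional L\'evy characterization in the extended filtration $(\hat B^{i,N,K})_{i\leq K}$ is a $3K$-dimensional Brownian motion, hence an i.i.d.\ family of 3D Brownian motions independent of the initial $\sigma$-field $\mathcal F_0$. Moreover $\E[|\tilde\beta^{i,N}_t-\hat B^{i,N,K}_t|^2]\leq 2\E[|\tilde\beta^{i,N,-}_t|^2]+2\E\int_0^t\|Q^i_s\|^2\,ds\leq CtK/N$.

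Next, define $Z^{i,N,K}$ as the unique global solution (Proposition~\ref{f3}(i)) of
\[Z^{i,N,K}_t = W^{i,N}_0 + \int_0^t b(f_s,Z^{i,N,K}_s)\,ds + \int_0^t \raa(f_s,Z^{i,N,K}_s)\,d\hat B^{i,N,K}_s.\]
Since $\hat B^{i,N,K}$ is a Brownian motion independent of $\mathcal F_0$ and $W^{i,N}_0\sim f_0$ (see the setup of Subsection~\ref{set}), Proposition~\ref{f3}(ii) gives $Z^{i,N,K}_t\sim f_t$. The family $(Z^{i,N,K})_{i\leq K}$ is i.i.d.\ because it is a measurable function of the i.i.d.\ data $(W^{i,N}_0,\hat B^{i,N,K})_{i\leq K}$ (the $W^{i,N}_0$ are i.i.d.\ under $f_0$ and the $\hat B^{i,N,K}$ are i.i.d.\ and independent of $\mathcal F_0$).

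It remains to bound $\E[|W^{1,N}_t-Z^{1,N,K}_t|^2]$. An It\^o computation on $|W^{1,N}_t-Z^{1,N,K}_t|^2$ produces (a) a drift contribution controlled via Lemma~\ref{f2}(ii), (b) a volatility mismatch $\|\raa(f_t,W^{1,N}_t)-\raa(f_t,Z^{1,N,K}_t)\|^2$ controlled via Lemma~\ref{f2}(vi), and (c) a forcing term proportional to $\Tr\!\bigl(\raa(f_t,W^{1,N}_t)\,U_{\e_N}(\cdot)R^1_t U_{\e_N}(\cdot)^*\,\raa(f_t,Z^{1,N,K}_t)\bigr)$, controlled by $\|R^1_t\|\leq CK/N$ together with Lemma~\ref{f2}(v) and the uniform-in-time moment bounds from Subsection~\ref{set}. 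A Gronwall argument combined with the stability strategy of Theorem~\ref{mr1} (which for hard potentials $\gamma\in(0,1]$ costs an $\eta$ loss, via Proposition~\ref{f4} applied with $f=g=f_t$) yields $\sup_{[0,T]}\E[|W^{1,N}_t-Z^{1,N,K}_t|^2]\leq C_{\eta,T}K/N^{1-\eta}$; in the Maxwell case, the Tanaka-type contractivity underlying Theorem~\ref{mr1}(i) ensures that only the Brownian mismatch accumulates, producing the sharper linear-in-$T$ form $C_\eta T\,K/N^{1-\eta}$. The main obstacle is Step~1: building an i.i.d.\ family of Brownian motions out of the correlated $\tilde\beta^{i,N}$'s while keeping them pathwise close, especially since the $U_{\e_N}$ matrices intertwine the noises across different $i$'s, which is what makes the explicit $\beta^{i,N,+}/\beta^{i,N,-}$ decomposition and L\'evy-characterization step necessary.
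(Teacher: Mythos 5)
Your construction of the i.i.d.\ driving noises is a workable variant of the paper's: where the paper simply replaces the shared Brownian motions $B^{ij}$, $j\le K$, by fresh independent copies $\tB^{ij}$ inside the formula for $\beta^{i,N}$, you discard the $j\le K$ part and complete the bracket with an auxiliary independent noise. The L\'evy-characterization and independence arguments go through, and although your pointwise claim $\|R^i_s\|=O(K/N)$ is false (a.s.\ one only has $R^i_s\le \Id$; the $O(K/N)$ bound holds for $\E[\Tr\, R^i_s]$ by exchangeability), the estimate $\E\int_0^t\|Q^i_s\|^2\,ds\le CtK/N$ that you actually need does hold. The identification $Z^{i,N,K}_t\sim f_t$ and the i.i.d.\ property via pathwise uniqueness are also fine.

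The genuine gap is in the error estimate. Because you drive $Z^{1,N,K}$ by $\hat B^{1,N,K}$ with no rotation, the coupling between the noises of $W^{1,N}$ and $Z^{1,N,K}$ is synchronous on their common (private) part, so the It\^o quadratic-variation term is $\|\raa(f_t,W^{1,N}_t)-\raa(f_t,Z^{1,N,K}_t)\|^2$. Lemma \ref{f2}-(vi) only bounds this by $C|W^{1,N}_t-Z^{1,N,K}_t|^2(1+m_{2+\gamma}(f_t))(1+|W^{1,N}_t|^2+|Z^{1,N,K}_t|^2)$, whose unbounded weight does not close a Gronwall argument, and there is no sign cancellation with the drift term. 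You cannot invoke Proposition \ref{f4} ``with $f=g=f_t$'' to rescue this: $\Gamma_\delta$ compares $\raa(f,v)$ with $\raa(f,w)\,U_\delta(a(f,v),a(f,w))$, i.e.\ it presupposes that the Givens--Shortt rotation has been inserted into the coupling of the Brownian motions, which is exactly what your SDE for $Z$ lacks; this is the point the paper emphasizes (synchronous coupling is not good enough, even for $\gamma=0$), since the reduction to the $\sigma$-level comparison in Lemma \ref{tictactoc} requires the rotation. The paper's proof accordingly defines $Z^i$ with the extra matrix $X^i_s=U_\delta(a(f_s,W^{i,N}_s),a(f_s,Z^{i}_s))$, $\delta=(K/N)^2$, in front of the driving noise; only then does one get $\E[I_1+I_2]\le\Gamma_\delta(R^N_t)$, which yields $C\sqrt\delta=CK/N$ directly in the Maxwell case (hence the linear-in-$T$ constant) and feeds the $M$-truncation/Gronwall scheme with the $\eta$ loss for $\gamma\in(0,1]$. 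Your construction can be repaired by inserting this rotation before $d\hat B^{i,N,K}$ (it changes neither the law of $Z^i$ nor the independence), and your ``forcing'' term (c) then requires the same H\"older argument as the paper's treatment of $I_3,I_4$ (since $\|[\raa(\mu^N_t,V^{1,N}_t)]^{-1}\sigma(V^{1,N}_t-V^{j,N}_t)\|^2$ is only bounded by $N$ a.s., whence the $N^{\eta}$ loss); but as written, the key step of your estimate is unjustified.
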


\begin{proof}
Let $K \in \{1,\dots,N\}$ and $\eta\in (0,1/2)$ be fixed for the whole proof. We also put $\delta=(K/N)^2 >0$.
For simplicity, we write $V^i_s=V^{i,N}_s$, $W^{i}_s=W^{i,N}_s$ and $Z^{i}_s=Z^{i,N,K}_s$.

\vip

{\it Step 1.}
We recall that the Brownian motions $(B^{ij})_{1\leq i < j\leq N}$ are independent, that $B^{ij}=-B^{ji}$
and we introduce a new family $(\tB^{ij})_{1\leq i,j \leq N}$ of independent Brownian motions
(also independent of everything else). We recall that the Brownian motions $\beta^{i,N}_t$ were defined
in Lemma \ref{reps} and we introduce, for $i=1,\dots,K$, 
$$
\tbeta^{i,N}_t=\frac 1 {\sqrt N}\sum_{j=1}^N\intot [\raa(\mu^N_s,V^{i,N}_s)]^{-1}
\sigma(V^{i,N}_s-V^{j,N}_s)d[\indiq_{\{j\leq K\}}\tB^{ij}_s+\indiq_{\{j> K\}}B^{ij}_s].
$$
One easily checks, using Remark \ref{14}-(ii), 
that the continuous 3D martingales $\tbeta^{1,N},\dots,\tbeta^{K,N}$ satisfy
$\langle\tbeta^{i,N},\tbeta^{j,N}\rangle_t= \Id t \indiq_{\{i=j\}}$, so that they are independent 3D Brownian motions.
We next claim that the system of equations (for $i=1,\dots,K$)
$$
Z^{i}_t=W^{i}_0+\intot b(f_s,Z^{i}_s)ds + \intot \raa(f_s,Z^{i}_s)
X^{i}_sU^{i}_s d \tbeta^{i,N}_s,
$$
where we have set $U^{i}_s=U_{\e_N}(a(\mu^{N}_s,V^{i}_s),
a(\nu^{N}_s,W^{i}_s))$ and $X^{i}_s=U_\delta(a(f_s,W^{i}_s),a(f_s,Z^{i}_s))$ for simplicity, 
has a pathwise unique solution (with the same arguments as usual, see the proof of Lemma \ref{treschiant})
and that for each $i=1,\dots,K$, $Z^{i}_t \sim f_t$ for all $t\geq 0$.
Furthermore, the Brownian motions $\intot X^{i}_sU^{i}_s d \tbeta^{i,N}_s$ being independent
(as orthogonal martingales with deterministic brackets), as well as the initial conditions
$W^i_0$, the pathwise uniqueness stated in Lemma \ref{f3}-(i) implies that the processes
$(Z^{i}_t)_{t\geq 0}$, for $i=1,\dots,K$, are independent.
It only remains to prove \eqref{lef} and, by exchangeability, it suffices to study $\E[|W^1_t-Z^1_t|^2]$.

\vip

{\it Step 2.} Here we verify that, denoting by $R^N_t$ the law of $(W^1_t,Z^1_t)$,
of which the two marginals equal $f_t$ ans using the notation of Proposition \ref{f4}, we have 
$$
\frac d {dt}\E[|W^1_t-Z^1_t|^2] \leq C_\eta K N^{\eta-1}+\Gamma_\delta(R^N_t).
$$
Recalling the equations satisfied by $W^1$ (see Lemma \ref{treschiant}) and $Z^1$, as well
as the expressions of $\beta^{1,N}$ (see Lemma \ref{reps}) and $\tbeta^{1,N}$, we see that
\begin{align*}
W^1_t-&Z^1_t=\intot [b(f_s,W^1_s)-b(f_s,Z^1_s)]ds\\
&+ \frac 1 {\sqrt{N}} \sum_{j=K+1}^N \intot\Big[\raa(f_s,W^1_s)-\raa(f_s,Z^1_s)X^1_s \Big]
U^1_s[\raa(\mu^N_s,V^{1,N}_s)]^{-1}\sigma(V^{1,N}_s-V^{j,N}_s)dB^{1j}_s\\
&+ \frac 1 {\sqrt{N}} \sum_{j=1}^K \intot \raa(f_s,W^1_s)U^1_s[\raa(\mu^N_s,V^{1,N}_s)]^{-1}
\sigma(V^{1,N}_s-V^{j,N}_s)dB^{1j}_s\\
&-  \frac 1 {\sqrt{N}} \sum_{j=1}^K \intot \raa(f_s,Z^1_s)X^1_sU^1_s[\raa(\mu^N_s,V^{1,N}_s)]^{-1}
\sigma(V^{1,N}_s-V^{j,N}_s)d\tB^{1j}_s.
\end{align*}
All the Brownian motions appearing in this formula are independent.
By the It\^o formula, we find
$(d/dt)\E[|W^1_t-Z^1_t|^2]=\E[I_1+I_2+I_3+I_4]$, with
\begin{align*}
I_1=&2(W^1_t-Z^1_t)\cdot [b(f_t,W^1_t)-b(f_t,Z^1_t)],\\
I_2=&\frac1N\sum_{j=K+1}^N \Big\|[\raa(f_t,W^1_t)-\raa(f_t,Z^1_t)X^1_t]U^1_t[\raa(\mu^N_t,V^{1,N}_t)]^{-1} 
\sigma(V^{1,N}_t-V^{j,N}_t) \Big\|^2,\\
I_3=&\frac1N\sum_{j=1}^K \Big\|[\raa(f_t,W^1_t)U^1_t[\raa(\mu^N_t,V^{1,N}_t)]^{-1} 
\sigma(V^{1,N}_t-V^{j,N}_t) \Big\|^2,\\
I_4=&\frac1N\sum_{j=1}^K \Big\|[\raa(f_t,Z^1_t)U^1_tX^1_t[\raa(\mu^N_t,V^{1,N}_t)]^{-1} 
\sigma(V^{1,N}_t-V^{j,N}_t) \Big\|^2.
\end{align*}

Using that $N^{-1}\sum_{j=1}^N[\sigma(V^{1,N}_t\!-V^{j,N}_t)]^2
=[\raa(\mu^N_t,V^{1,N}_t)]^2$ and that $\|A\|^2=\Tr\;AA^*$, we find
\begin{align*}
I_2\leq& \frac1N\sum_{j=1}^N \Big\|[\raa(f_t,W^1_t)-\raa(f_t,Z^1_t)X^1_t]U^1_t[\raa(\mu^N_t,V^{1,N}_t)]^{-1} 
\sigma(V^{1,N}_t-V^{j,N}_t) \Big\|^2\\
=&\Big\| [ \raa(f_t,W^1_t)-\raa(f_t,Z^1_t)X^1_t ]U^1_t\Big\|^2\\
=&\Big\| \raa(f_t,W^1_t)-\raa(f_t,Z^1_t)X^1_t\Big\|^2
\end{align*}
because $U^1_t$ is a.s. an orthogonal matrix.
Recalling the notation of Proposition \ref{f4} and that $X^1_t=U_\delta(a(f_t,W^{i}_t),a(f_t,Z^{i}_t))$, 
we conclude that $\E[I_1+I_2]\leq \Gamma_\delta(R^N_t)$. 

\vip

By exchangeability, we have, for $q> 1$ and $q'=q/(q-1)$, by H\"older's inequality,
\begin{align*}
\E[I_3]\leq & \frac K N \E\Big[\Big\|\raa(f_t,W^1_t)\Big\|^2\Big\|[\raa(\mu^N_t,V^{1,N}_t)]^{-1} 
\sigma(V^{1,N}_t-V^{2,N}_t)\Big\|^2 \Big]\\
\leq & \frac K N \E\Big[\Big\|\raa(f_t,W^1_t)\Big\|^{2q}\Big]^{1/q}
\E\Big[\Big\|[\raa(\mu^N_t,V^{1,N}_t)]^{-1} \sigma(V^{1,N}_t-V^{2,N}_t)\Big\|^{2q'} \Big]^{1/q'}.
\end{align*}
By Lemma \ref{f2}, since $W^1_t \sim f_s$ and since $\sup_{[0,\infty)} m_{(2+\gamma)q}(f_s) <\infty$
(see Subsection \ref{set}), we have 
$\E[\|\raa(f_t,W^1_t)\|^{2q}]^{1/q}\leq C_q$. Next, we have a.s.
$$
\|[\raa(\mu^N_t,V^{1,N}_t)]^{-1} \sigma(V^{1,N}_t-V^{2,N}_t)\|^2 \leq \sum_{j=1}^N 
\|[\raa(\mu^N_t,V^{1,N}_t)]^{-1} \sigma(V^{1,N}_t-V^{j,N}_t)\|^2=N
$$
by Remark \ref{14}-(ii) and, by exchangeability, 
$$
\E[\|[\raa(\mu^N_t,V^{1,N}_t)]^{-1} \sigma(V^{1,N}_t-V^{2,N}_t)\|^2]=N^{-1}\E[\sum_{j=1}^N 
\|[\raa(\mu^N_t,V^{1,N}_t)]^{-1} \sigma(V^{1,N}_t-V^{j,N}_t)\|^2]=1.
$$
Consequently,
\begin{align*}
\E[I_3]\leq & C_q \frac K N \E\Big[\Big\|[\raa(\mu^N_t,V^{1,N}_t)]^{-1} \sigma(V^{1,N}_t-V^{2,N}_t)\Big\|^{2} 
N^{2(q'-1)} \Big]^{1/q'} = C_q K N^{2(q'-1)/q'-1}.
\end{align*}
Choosing $q= 2/\eta$, we find that $2(q'-1)/q'=\eta$, whence $\E[I_3]\leq C_\eta K N^{\eta-1}$.
Finally, $I_4$ is treated exactly as $I_3$ and this ends the step.

\vip

{\it Step 3.} If $\gamma=0$, by Proposition \ref{f4}-(i), 
$\Gamma_\delta(R^N_t)\leq C\sqrt{\delta}(1+m_2(f_t))^{1/2}
\leq C \sqrt{\delta}$,
so that we end with $(d/dt)\E[|W^1_t-Z^1_t|^2]\leq C\sqrt{\delta} + C_\eta  K N^{\eta-1}\leq C_\eta K N^{\eta-1}$
(because $\delta=(K/N)^2$).
Since $W^1_0=Z^1_0$, we conclude that $\E[|W^1_t-Z^1_t|^2]\leq  C_\eta  K N^{\eta-1} T$ as desired.

\vip

{\it Step 4.} Assume next that $\gamma \in (0,1]$. We then have
$\sup_{[0,\infty)} [m_{2+\gamma}(f_t)+\cE_\alpha(f_t)]<\infty$, see Subsection \ref{set}.
We thus infer from Proposition \ref{f4}-(ii) that for all $M>0$,
$$
\Gamma_\delta(R^N_t) \leq C \sqrt \delta + M\int_{\rd\times\rd}|v-w|^2R^N_t(dv,dw)+ C e^{-\kappa M^{\alpha/\gamma}}.
$$
But $\sqrt \delta = K N^{-1}$ and $\int_{\rd\times\rd}|v-w|^2R^N_t(dv,dw)=\E[|W^1_t-Z^1_t|^2]$, 
so that we have proved that
$(d/dt)\E[|W^1_t-Z^1_t|^2]\leq C_\eta  K N^{\eta-1} + M\E[|W^1_t-Z^1_t|^2] + 
C e^{-\kappa M^{\alpha/\gamma}}$ and thus
$$
\sup_{[0,T]}\E[|W^1_t-Z^1_t|^2] \leq [C_\eta T K N^{\eta-1}+C T e^{-\kappa M^{\alpha/\gamma}}]e^{MT}.
$$
Choosing $M=[\kappa^{-1}\log(1+K^{-1}N^{1-\eta})]^{\gamma/\alpha}$, for which
$e^{-\kappa M^{\alpha/\gamma}}=1/(1+K^{-1}N^{1-\eta})\leq KN^{\eta-1}$,
$$
\sup_{[0,T]}\E[|W^1_t-Z^1_t|^2] \leq C_\eta T
K N^{\eta-1}\exp\Big(T[\kappa^{-1}\log(1+K^{-1}N^{1-\eta})]^{\gamma/\alpha}\Big).
$$
Since $\gamma<\alpha$, this is easily bounded by $C_{\eta,T} (K N^{\eta-1})^{1-\eta}\leq C_{\eta,T} K N^{-(1-\eta)^2}
\leq C_{\eta,T} K N^{2\eta-1}$.
\end{proof}

A first consequence of the previous Lemma is the following quantitative law of large numbers.

\begin{lem}\label{LLN}
Consider a function $\varphi : \rd \mapsto \rr$ satisfying
$|\varphi(x)-\varphi(y)|\leq C |x-y|(1+|x|^q+|y|^q)$ for some $q\geq 2$. As usual,
we set $\varphi(\mu,x)=\intrd \varphi(x-y)\mu(dy)$ for any probability measure $\mu$ on $\rd$.
Then for all $T>0$, all $\eta \in (0,1/2)$,
$$
\sup_{[0,T]}\E\big[|\varphi(\nu^{N}_t,W^{1,N}_t)-\varphi(f_t,W^{1,N}_t)|^2\big] 
\leq C_{\eta,T,\varphi} N^{\eta - 1/2}.
$$
Moreover, the constant $C_{\eta,T,\varphi}$ is of the form $C_{\eta,\varphi} \sqrt T$ if $\gamma=0$.
\end{lem}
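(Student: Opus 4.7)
The plan is to expand the second moment by squaring the sum $N^{-1}\sum_{j=1}^N \psi^j_t$ with $\psi^j_t := \varphi(W^{1,N}_t-W^{j,N}_t) - \varphi(f_t, W^{1,N}_t)$, use exchangeability to reduce the $N^2$ cross-terms to four distinct covariance-type quantities, and then control the sole dominant term by invoking Lemma \ref{lf} with the minimal value $K=3$ --- just enough to couple the three $W$-variables that appear --- while exploiting that the analogous quantity built from i.i.d.\ samples vanishes identically.

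By exchangeability of $(W^{i,N}_t)_{i=1,\dots,N}$,
\[
\E\bigl[|\varphi(\nu^N_t, W^{1,N}_t)-\varphi(f_t, W^{1,N}_t)|^2\bigr] = \frac{\Sigma_1 + 2(N{-}1)\Sigma_2 + (N{-}1)\Sigma_3 + (N{-}1)(N{-}2)\Sigma_4}{N^2},
\]
with $\Sigma_1 := \E[(\psi^1_t)^2]$, $\Sigma_2 := \E[\psi^1_t\psi^2_t]$, $\Sigma_3 := \E[(\psi^2_t)^2]$, $\Sigma_4 := \E[\psi^2_t\psi^3_t]$. The polynomial growth of $\varphi$ and the uniform-in-$(N,t)$ moment bounds recalled in Subsection \ref{set} immediately yield $|\Sigma_1|,|\Sigma_2|,|\Sigma_3| \leq C_\varphi$, so these three contributions are at most $C_\varphi/N$, comfortably below the target rate $N^{\eta-1/2}$. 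The whole problem reduces to proving $|\Sigma_4| \leq C_{\eta,T,\varphi}\, N^{\eta-1/2}$.

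For this, I would apply Lemma \ref{lf} with $K=3$ and with parameter $2\eta$ in place of $\eta$, obtaining i.i.d.\ $f_t$-distributed variables $Z^{1,N,3}_t, Z^{2,N,3}_t, Z^{3,N,3}_t$ with $\sup_{[0,T]}\E[|W^{i,N}_t-Z^{i,N,3}_t|^2] \leq C_{\eta,T}\, N^{2\eta-1}$ for $i=1,2,3$. Writing $F(a,b,c) := [\varphi(a-b)-\varphi(f_t,a)][\varphi(a-c)-\varphi(f_t,a)]$ so that $\Sigma_4 = \E[F(W^{1,N}_t,W^{2,N}_t,W^{3,N}_t)]$, conditioning on $Z^{1,N,3}_t$ and using the independence of the remaining $Z$'s gives
\[
\E[F(Z^{1,N,3}_t, Z^{2,N,3}_t, Z^{3,N,3}_t)] = \E\Bigl[\bigl(\E[\varphi(Z^{1,N,3}_t-Z^{2,N,3}_t)\,|\,Z^{1,N,3}_t] - \varphi(f_t, Z^{1,N,3}_t)\bigr)^2\Bigr] = 0,
\]
so $|\Sigma_4| = |\E[F(W) - F(Z)]|$. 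The hypothesis on $\varphi$ --- combined with the induced polynomial-Lipschitz property of $\varphi(f_t,\cdot)$, which follows from the uniform control of $m_q(f_t)$ --- yields a bound $|F(a,b,c)-F(a',b',c')| \leq C_\varphi(|a-a'|+|b-b'|+|c-c'|)\,P(a,a',b,b',c,c')$ with $P$ a polynomial whose moments on $(W^{i,N}_t, Z^{i,N,3}_t)$ are bounded by Subsection \ref{set}. Cauchy--Schwarz then delivers
\[
|\Sigma_4| \leq C_{\eta,T,\varphi}\Bigl(\sum_{i=1}^3 \E[|W^{i,N}_t-Z^{i,N,3}_t|^2]\Bigr)^{1/2} \leq C_{\eta,T,\varphi}\, N^{\eta-1/2},
\]
and collecting all contributions gives the claimed bound.

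For $\gamma=0$, Lemma \ref{lf} provides a coupling constant of the form $C_\eta T$, and its square root in the final Cauchy--Schwarz step produces exactly the $\sqrt{T}$-prefactor announced in the lemma. The main obstacle is twofold: first, recognising that $\Sigma_4$ is the only covariance producing a rate slower than $1/N$; second, noticing that the exact centering of $\psi^j$ around $\varphi(f_t,\cdot)$ makes its i.i.d.\ surrogate vanish identically, which is what forces the choice $K=3$ (rather than some optimised power of $N$) and then lets one convert the coupling estimate of Lemma \ref{lf} into the desired rate via a single Cauchy--Schwarz step.
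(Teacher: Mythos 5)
Your proposal is correct and follows essentially the same route as the paper: the same expansion by exchangeability into four covariance terms, the same observation that only the term with three distinct indices matters, the same use of Lemma \ref{lf} with $K=3$ together with the vanishing of the i.i.d.\ surrogate, and the same Lipschitz-plus-Cauchy--Schwarz conversion of the coupling bound into the rate $N^{\eta-1/2}$ (the paper simply applies Lemma \ref{lf} with $\eta$ and bounds $N^{(\eta-1)/2}\le N^{\eta-1/2}$ instead of reparametrising by $2\eta$). The $\sqrt{T}$ refinement for $\gamma=0$ is obtained exactly as you describe.
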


\begin{proof}
Using exchangeability, we write 
\begin{align*}
\E\big[|\varphi(\nu^{N}_t,W^{1,N}_t)-\varphi(f_t,W^{1,N}_t)|^2\big] 
= & \frac 1 {N^2}\E\Big[\Big(\sum_{i=1}^N [\varphi(W^{1,N}_t-W^{i,N}_t)- \varphi(f_t,W^{1,N}_t)] \Big)^2\Big]\\
=&\frac1{N^2}\Big( I_1+ 2(N-1)I_2+ (N-1)I_3+(N-1)(N-2)I_4\Big),
\end{align*}
with (we develop the squared sum and separate the cases (a) $i=j=1$, (b) $i=1$ and $j\ne 1$ or $i\ne 1$ and $j=1$, 
(c) $i=j \ne 1$, (d) $i\ne j$, $i\ne 1$, $j\ne 1$)
\begin{align*}
I_1=& \E[(\varphi(0)-\varphi(f_t,W^{1,N}_t) )^2 ],\\
I_2=&\E[(\varphi(0)-\varphi(f_t,W^{1,N}_t) )
(\varphi(W^{1,N}_t-W^{2,N}_t)- \varphi(f_t,W^{1,N}_t)) ]\\
I_3=&\E[(\varphi(W^{1,N}_t-W^{2,N}_t)- \varphi(f_t,W^{1,N}_t))^2 ]\\
I_4=& \E[(\varphi(W^{1,N}_t-W^{2,N}_t)- \varphi(f_t,W^{1,N}_t))
(\varphi(W^{1,N}_t-W^{3,N}_t)- \varphi(f_t,W^{1,N}_t))].
\end{align*}
Using only that $\varphi$ has at most polynomial growth, that $W^{1,N}_t\sim W^{2,N}_t\sim f_t$
and that all the moments of $f_t$ are uniformly (in time) bounded, we easily verify that
$I_1+I_2+I_3 \leq C_{\varphi}$, whence $N^{-2}(I_1+ 2(N-1)I_2+ (N-1)I_3) \leq C_\varphi N^{-1}$.
We next use Lemma \ref{lf} with $K=3$ to write
$I_4 \leq J_1+J_2+J_3$, with
\begin{align*}
J_1=&\E[(\varphi(Z^{1,N,3}_t-Z^{2,N,3}_t)- \varphi(f_t,Z^{1,N,3}_t))
(\varphi(Z^{1,N,3}_t-Z^{3,N,3}_t)- \varphi(f_t,Z^{1,N,3}_t))],\\
J_2=&\E[(|\varphi(W^{1,N}_t-W^{2,N}_t)-\varphi(Z^{1,N,3}_t-Z^{2,N,3}_t)|
+|\varphi(f_t,W^{1,N}_t)-\varphi(f_t,Z^{1,N,3}_t)  |) \\
&\hskip1cm \times |\varphi(Z^{1,N,3}_t-Z^{3,N,3}_t)- \varphi(f_t,Z^{1,N,3}_t)|],\\
J_3=&\E[|\varphi(W^{1,N}_t-W^{2,N}_t)- \varphi(f_t,W^{1,N}_t)|  \\
&\hskip1cm \times(|\varphi(W^{1,N}_t-W^{3,N}_t)-\varphi(Z^{1,N,3}_t-Z^{3,N,3}_t)|
+|\varphi(f_t,W^{1,N}_t)-\varphi(f_t,Z^{1,N,3}_t)  |)].
\end{align*}
But $J_1=0$ because $ Z^{1,N,3}_t,Z^{2,N,3}_t,Z^{3,N,3}_t$ are independent and $f_t$-distributed:
it suffices to first take the conditional expectation knowing $Z^{1,N,3}_t$ and to observe 
that $\E[\varphi(Z^{1,N,3}_t-Z^{2,N,3}_t) \vert Z^{1,N,3}_t]=\varphi(f_t,Z^{1,N,3}_t)$.
Next, using that all the variables $W^{1,N}_t,W^{2,N}_t,W^{3,N}_t,Z^{1,N,3}_t,Z^{2,N,3}_t,Z^{3,N,3}_t$
are $f_t$-distributed, that  all the moments of $f_t$ are uniformly bounded, that 
$\varphi$ has at most polynomial growth, the local Lipschitz property of $\varphi$,
and that $|\varphi(f_t,w)-\varphi(f_t,z)| \leq C_\varphi |w-z|(1+m_q(f_t)+|w|^q+|z|^q)$,
we easily get convinced that, by exchangeability and the Cauchy-Schwarz inequality,
$$
\E[J_2+J_3] \leq C_\varphi \E[|W^{1,N}_t-Z^{1,N,3}_t|^2]^{1/2}.
$$
This is bounded by $C_{\eta,T,\varphi} N^{\eta-1/2}$ by Lemma \ref{lf} with $K=3$, and the
constant  $C_{\eta,T,\varphi}$ is of the form $C_{\eta,\varphi}\sqrt T$ in the case where $\gamma=0$. 
\end{proof}

\subsection{Computation of the error}
We now handle the main computation of the proof.

\begin{lem}\label{big}
For all $T>0$, all $\eta \in (0,1/4)$, all $t\in [0,T]$,
$$
\E[|V^{1,N}_t-W^{1,N}_t|^2] \leq 
\begin{cases} 
C_{\eta} (1+T)^{5/2} \big(\E[|V^{1,N}_0-W^{1,N}_0|^2]+N^{\eta-1/4}\big)  & \hbox{if $\gamma=0$},\\
C_{\eta} (1+T)^{5/2} \big(\E[|V^{1,N}_0-W^{1,N}_0|^2]+N^{\eta-1/2}\big)  & \hbox{if $\gamma=0$ and $H(f_0)<\infty$},\\
C_{\eta,T} \big(\E[|V^{1,N}_0-W^{1,N}_0|^2]+N^{-1/4}\big)^{1-\eta} & \hbox{if $\gamma \in (0,1]$,}\\
C_{\eta,T} \big(\E[|V^{1,N}_0-W^{1,N}_0|^2]+N^{-1/2}\big)^{1-\eta} & \hbox{if $\gamma \in (0,1]$ and $H(f_0)<\infty$}.
\end{cases}
$$
\end{lem}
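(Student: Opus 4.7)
The plan is to apply Itô's formula to $|V^{1,N}_t-W^{1,N}_t|^2$, using the SDEs of Lemmas \ref{reps} and \ref{treschiant} (both driven by the same Brownian motion $\beta^{1,N}$), take expectations, and estimate the resulting differential inequality for $u_t:=\E[|V^{1,N}_t-W^{1,N}_t|^2]$. Writing $V=V^{1,N}_t$, $W=W^{1,N}_t$ and $U=U_{\e_N}(a(\mu^N_t,V),a(\nu^N_t,W))$, this produces
$$
\frac{d}{dt}u_t=\E\bigl[2(V-W)\cdot(b(\mu^N_t,V)-b(f_t,W))+\|\raa(\mu^N_t,V)-\raa(f_t,W)U\|^2\bigr].
$$

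My strategy is to split the integrand by inserting $b(\nu^N_t,W)$ in the drift and $\raa(\nu^N_t,W)U$ in the diffusion. Setting $P=\raa(\mu^N_t,V)-\raa(f_t,W)U$, $Q=\raa(\mu^N_t,V)-\raa(\nu^N_t,W)U$ and $R=P-Q=(\raa(\nu^N_t,W)-\raa(f_t,W))U$, the exact expansion $\|P\|^2=\|Q\|^2+2\langle Q,R\rangle+\|R\|^2$ together with orthogonality of $U$ (giving $\|R\|=\|\raa(\nu^N_t,W)-\raa(f_t,W)\|$) isolates a ``matched'' piece whose expectation, by exchangeability, is exactly $\E[\Gamma_{\e_N}(R^N_t)]$ for the empirical coupling $R^N_t=N^{-1}\sum_i\delta_{(V^{i,N}_t,W^{i,N}_t)}$ (whose marginals are $\mu^N_t,\nu^N_t$). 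Applying Proposition \ref{f4} pathwise to this random coupling and using the uniform moment bounds of Subsection \ref{set}, one obtains $\E[\Gamma_{\e_N}(R^N_t)]\leq CN^{-1/2}$ when $\gamma=0$, and $\E[\Gamma_{\e_N}(R^N_t)]\leq CN^{-1/2}+Mu_t+Ce^{-\kappa M^{\alpha/\gamma}}$ when $\gamma\in(0,1]$.

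The two correction pieces are handled by the LLN. The drift correction $2(V-W)\cdot(b(\nu^N_t,W)-b(f_t,W))$ is bounded in expectation by Cauchy--Schwarz and Lemma \ref{LLN} applied componentwise to $b$ (polynomially Lipschitz by Lemma \ref{f1}), producing a factor $u_t^{1/2}$ times an $L^2$ LLN quantity of order $N^{\eta-1/2}$. For the diffusion correction $2\langle Q,R\rangle+\|R\|^2$, Cauchy--Schwarz together with the uniform bound $\E[\|Q\|^2]\leq C$ (from Lemma \ref{f2}-(v) and moment propagation) gives $\E[|2\langle Q,R\rangle|]\leq 2\E[\|Q\|^2]^{1/2}\E[\|R\|^2]^{1/2}$, and Lemma \ref{math} transfers $\|\raa-\raa\|$ to $\|a-a\|$, which is controlled via Lemma \ref{LLN} applied entrywise to $a$. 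The split between the two regimes appears here: under $H(f_0)<\infty$, the ellipticity bound \eqref{elli} unlocks the quadratic form $\|A^{1/2}-B^{1/2}\|^2\leq C\|A-B\|^2$ of Lemma \ref{math}, yielding the sharper $N^{\eta-1/2}$ rate on $\E[\|R\|^2]$; in the general case only the square-root form $\|A^{1/2}-B^{1/2}\|^2\leq C\|A-B\|$ is available and one obtains the weaker $N^{\eta-1/4}$ rate.

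Assembling the pieces, for $\gamma=0$ the estimate takes the form $\frac{d}{dt}u_t\leq A_N u_t^{1/2}+B_N$ with $A_N,B_N$ polynomial in $T$ and vanishing in $N$ at the rates dictated above; integrating by bootstrapping on $\sqrt{u_t}$ rather than $u_t$ directly (so as to avoid an $e^T$ factor) yields the polynomial $(1+T)^{5/2}$ dependence claimed. For $\gamma\in(0,1]$, the extra $Mu_t$ term forces exponential-in-$MT$ Gronwall; choosing $M=[\kappa^{-1}\log(1+1/(u_0+N^\theta))]^{\gamma/\alpha}$ exactly as in the proof of Theorem \ref{mr1} then converts this into the $(u_0+N^\theta)^{1-\eta}$ form with constant $C_{\eta,T}$, where $\theta\in\{1/4,1/2\}$ according to ellipticity. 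The main obstacle will be to keep the cross term $\langle Q,R\rangle$ under control: a crude $(a+b)^2\leq 2a^2+2b^2$ splitting of $\|P\|^2$ would cost an uncontrolled factor $\sqrt N$, so one must work with the exact expansion and exploit the uniform $L^2$-control on $\|Q\|$ to absorb the cross term into an $\E[\|R\|^2]^{1/2}$ LLN quantity, and separately to ensure that the $u_t^{1/2}$ structure of the drift correction is preserved so as to unlock the bootstrap.
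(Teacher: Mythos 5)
Your overall architecture matches the paper's proof of Lemma \ref{big}: It\^o's formula for $|V^{1,N}_t-W^{1,N}_t|^2$, insertion of $b(\nu^N_t,W^{1,N}_t)$ and $\raa(\nu^N_t,W^{1,N}_t)U^{1,N}_t$, identification of the matched piece with $\E[\Gamma_{\e_N}(\zeta^N_t)]$ via exchangeability and Proposition \ref{f4}, control of the corrections by Lemma \ref{LLN}, the two inequalities of Lemma \ref{math} (square-root form in general, Lipschitz form under \eqref{elli} when $H(f_0)<\infty$), and the $\log$-choice of $M$ in the Gronwall step when $\gamma\in(0,1]$. But there is a genuine gap in your treatment of the cross term, and it destroys the claimed rates. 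Writing, in your notation, $K^N_t=\|R\|^2=\|\raa(\nu^N_t,W^{1,N}_t)-\raa(f_t,W^{1,N}_t)\|^2$ and $L^N_t=\|Q\|^2$, your bound $\E[|2\lps Q,R\rps|]\le 2\E[\|Q\|^2]^{1/2}\E[\|R\|^2]^{1/2}\le C\,\E[K^N_t]^{1/2}$ produces an additive term in $\frac{d}{dt}u_t$ of order $N^{(\eta-1/4)/2}$ in the general case (resp. $N^{(\eta-1/2)/2}$ with finite entropy), carrying no factor of $\sqrt{u_t}$. After integration this gives a rate which is essentially the \emph{square root} of the one asserted in the Lemma ($\approx N^{-1/8}$ instead of $N^{-1/4}$, resp. $\approx N^{-1/4}$ instead of $N^{-1/2}$), so the statement is not proved. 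The uniform bound $\E[\|Q\|^2]\le C$ is true but throws away exactly the information you need: $Q$ is itself small when the particle system is close to the nonlinear processes.

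The missing idea is the paper's Step 5. Since
$$
L^N_t=I^N_t-2(V^{1,N}_t-W^{1,N}_t)\cdot\big(b(\mu^N_t,V^{1,N}_t)-b(\nu^N_t,W^{1,N}_t)\big)
\le I^N_t+C\,M^N_t\sqrt{H^N_t},
$$
with $M^N_t=|V^{1,N}_t-W^{1,N}_t|^2+N^{-1}\sum_1^N|V^{i,N}_t-W^{i,N}_t|^2$ (so $\E[M^N_t]=2u_t$ by exchangeability) and $H^N_t$ a quantity with all moments uniformly bounded, one gets
$$
\E\big[\sqrt{K^N_tL^N_t}\big]\le \E[I^N_t]+\E[K^N_t]+C\,\E\big[(M^N_t)^{1/2}(K^N_t)^{1/2}(H^N_t)^{1/4}\big]
\le \E[I^N_t]+\E[K^N_t]+C_\eta\,\sqrt{u_t}\;\E[K^N_t]^{(1-\eta)/2},
$$
the last step by a triple H\"older inequality with exponents $2$, $2/(1-\eta)$, $2/\eta$. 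This keeps the factor $\sqrt{u_t}$ in front of the cross term, so that $\E[K^N_t]^{1-\eta}$ (and not $\E[K^N_t]^{1/2}$ alone) appears as the effective error after integrating the inequality $\frac{d}{dt}u_t\le 3\E[I^N_t]+3\E[K^N_t]+C_\eta\sqrt{u_t(\E[J^N_t]+\E[K^N_t]^{1-\eta})}$; this is what yields the rates $N^{\eta-1/4}$ and $N^{\eta-1/2}$ (and the polynomial $(1+T)^{5/2}$ when $\gamma=0$). Your instinct to avoid a crude splitting of $\|P\|^2$ was right, but absorbing the cross term through the uniform $L^2$-control of $\|Q\|$ is not enough; you must instead bound $\|Q\|^2$ by $I^N_t$ plus a quantity proportional (in expectation) to $u_t$.
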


\begin{proof} For simplicity, we write $V^{i}_t=V^{i,N}_t$, $W^{i}_t=W^{i,N}_t$ and
$U^i_t=U_{\e_N}(a(\mu^N_t,V^{i,N}_t),a(\nu^N_t,W^{i,N}_t))$. Also, we set
$u^N_t=\E[|V^{1,N}_t-W^{1,N}_t|^2]$.
For each $t\geq 0$, we define $\zeta^{N}_t=N^{-1}\sum_1^N \delta_{(V^{i,N}_t,W^{i,N}_t)}$, which
a.s. belongs to $\cH(\mu^N_t,\nu^N_t)$. We fix $\eta \in (0,1/9)$ and $T>0$ 
and we work on $[0,T]$.

\vip

{\it Step 1.} Recalling the equations satisfied by $V^1$ (see Lemma \ref{reps}) and $W^1$ 
(see Lemma \ref{treschiant}), 
the It\^o formula leads us to
\begin{align*}
\frac d{dt} u^N_t=&\E\Big[2(V^1_t-W^1_t)\cdot(b(\mu^N_t,V^1_t)-b(f_t,W^1_t))
+ \|\raa(\mu^N_t,V^1_t)-\raa(f_t,W^1_t) U^1_t \|^2 \Big]\\
=&\E\Big[2(V^1_t-W^1_t)\cdot(b(\mu^N_t,V^1_t)-b(\nu^N_t,W^1_t))
+ \|\raa(\mu^N_t,V^1_t)-\raa(\nu^N_t,W^1_t) U^1_t \|^2 \Big]\\
&+\E\Big[2(V^1_t-W^1_t)\cdot(b(\nu^N_t,W^1_t)-b(f_t,W^1_t)) \Big]\\
&+\E\Big[ \|(\raa(\nu^N_t,W^1_t)-\raa(f_t,W^1_t))U^1_t \|^2 \Big]\\
&+2\E\Big[\lps\raa(\mu^N_t,V^1_t)-\raa(\nu^N_t,W^1_t) U^1_t,
(\raa(\nu^N_t,W^1_t)-\raa(f_t,W^1_t))U^1_t \rps\Big]
\end{align*}
Using now that $U^1_t$ is an orthogonal matrix and the Cauchy-Schwarz inequality, we find that
$$
\frac d{dt} u^N_t \leq \E[I^N_t] + 2 \sqrt{u^N_t\E[J^N_t] }+\E[K^N_t]+2\E\Big[\sqrt{L^N_t K^N_t}\Big],
$$
where
\begin{align*}
I^N_t=&2(V^1_t-W^1_t)\cdot(b(\mu^N_t,V^1_t)-b(\nu^N_t,W^1_t))
+ \|\raa(\mu^N_t,V^1_t)-\raa(\nu^N_t,W^1_t) U^1_t \|^2,\\
J^N_t=&|b(\nu^N_t,W^1_t)-b(f_t,W^1_t)|^2,\\
K^N_t=&\|\raa(\nu^N_t,W^1_t)-\raa(f_t,W^1_t) \|^2, \\
L^N_t=&\|\raa(\mu^N_t,V^1_t)-\raa(\nu^N_t,W^1_t) U^1_t\|^2.
\end{align*}

{\it Step 2.} We first prove that $\E[I^N_t]=\E[\Gamma_{\e_N}(\zeta^N_t)]$.
Using exchangeability,
\begin{align*}
\E[I^N_t]=&\E\Big[\frac 1N \sum_{i=1}^N [2(V^i_t-W^i_t)\cdot(b(\mu^N_t,V^i_t)-b(\nu^N_t,W^i_t))
+ \|\raa(\mu^N_t,V^i_t)-\raa(\nu^N_t,W^i_t) U^i_t \|^2]. 
\end{align*}
It then suffices to recall that $\zeta^{N}_t=N^{-1}\sum_1^N \delta_{(V^{i,N}_t,W^{i,N}_t)}$,
of which the marginals are $\mu^N_t$ and $\nu^N_t$, that
$U^i_t=U_{\e_N}(a(\mu^N_t,V^{i}_t),a(\nu^N_t,W^{i}_t))$
and the notation of Proposition \ref{f4}.

\vip

{\it Step 3.} Using Lemma \ref{LLN} and the Lipschitz property of $b$ checked in Lemma \ref{f1},
we immediately get that $\E[J^N_t] \leq C_{\eta,T} N^{\eta-1/2}$, with moreover $C_{\eta,T}=C_\eta T^{1/2}$
if $\gamma=0$.

\vip

{\it Step 4.} Here we verify that 

\vip

(i) we always have $\E[K^N_t] \leq C_{\eta,T} N^{\eta-1/4}$, 
with moreover $C_{\eta,T}=C_\eta  T^{1/4}$
if $\gamma=0$; 

\vip

(ii) if $H(f_0)<\infty$, then $\E[K^N_t] \leq C_{\eta,T} N^{\eta-1/2}$, with moreover 
$C_{\eta,T}=C_\eta  T^{1/2}$
if $\gamma=0$.
 
\vip

For (i), we use the first inequality of Lemma \ref{math} to write
$K^N_t=\| \raa(\nu^N_t,W^1_t)-\raa(f_t,W^1_t)\|^2 \leq C \|a(\nu^N_t,W^1_t)-a(f_t,W^1_t)\|$.
We then apply Lemma \ref{LLN}, which is licit thanks to the Lipschitz property of $a$ checked in Lemma \ref{f1},
to get $\E[K^N_t] \leq C_{\eta,T} N^{\eta-1/4}$, with $C_{\eta,T}=C_\eta  T^{1/4}$
if $\gamma=0$.

\vip

For point (ii), we use the second inequality of Lemma \ref{math} and then the ellipticity 
estimate \eqref{elli} to write 
$K^N_t\leq C \|[a(f_t,W^1_t)]^{-1}\| \|a(\nu^N_t,W^1_t)-a(f_t,W^1_t)\|^2\leq \|a(\nu^N_t,W^1_t)-a(f_t,W^1_t)\|^2$.
Again, Lemma \ref{LLN}
implies that $\E[K^N_t] \leq C_{\eta,T} N^{\eta-1/2}$, with moreover $C_{\eta,T}=C_\eta T^{1/2}$
if $\gamma=0$.

\vip

{\it Step 5.} We now check that $\E[\sqrt{K^N_tL^N_t}]\leq \E[I^N_t]+ \E[K^N_t] + C_\eta 
\sqrt{u^N_t} \E[K^N_t]^{(1-\eta)/2}$.
We first observe that by Lemma \ref{f1},
\begin{align*}
L^N_t =& I^N_t - 2 (V^1_t-W^1_t)\cdot(b(\mu^N_t,V^1_t)-b(\nu^N_t,W^1_t)) \\
=& I^N_t -2 (V^1_t-W^1_t)\cdot \frac 1N \sum_{i=1}^N (b(V^1_t-V^i_t)-b(W^1_t-W^i_t))\\
\leq & I^N_t + C |V^1_t-W^1_t| \frac 1 N  \sum_{i=1}^N (|V^1_t-W^1_t|+|V^i_t-W^i_t|)
(1+|V^1_t|+|W^1_t|+|V^i_t|+|W^i_t|)^\gamma\\
\leq & I^N_t + C |V^1_t-W^1_t|\Big(\frac 1 N  \sum_{i=1}^N  (|V^1_t-W^1_t|^2+|V^i_t-W^i_t|^2)\Big)^{1/2}
\sqrt{H^N_t}\\
\leq & I^N_t + C  M^N_t \sqrt{H^N_t},
\end{align*}
where we have set 
$$
H^N_t=\frac 1N\sum_{i=1}^N (1+|V^1_t|+|W^1_t|+|V^i_t|+|W^i_t|)^{2\gamma} \quad \hbox{and}\quad
M^N_t=|V^1_t-W^1_t|^2+\frac 1N\sum_{i=1}^N |V^i_t-W^i_t|^2.$$
Since $\sqrt{x(y+z)}\leq \sqrt{xy}+\sqrt{xz}\leq x+y+\sqrt{xz}$, we conclude that 
\begin{align*}
\E\big[\sqrt{K^N_tL^N_t}\big]\leq& \E[I^N_t]+ \E[K^N_t] + C\E[(K^N_t)^{1/2} (M^N_t)^{1/2} (H^N_t)^{1/4}]\\
=& \E[I^N_t]+ \E[K^N_t] + C\E[(M^N_t)^{1/2} (K^N_t)^{(1-\eta)/2} ((K^N_t)^{\eta/2}(H^N_t)^{1/4})]\\
\leq & \E[I^N_t]+ \E[K^N_t] +C \E[M^N_t]^{1/2}\E[K^N_t]^{(1-\eta)/2}\E[((K^N_t)^{\eta/2}(H^N_t)^{1/4})^{2/\eta}]^{\eta/2},
\end{align*}
where we used the triple H\"older inequality with $p=2$, $q=2/(1-\eta)$ and $r=2/\eta$ for the last inequality.
But it holds that $\E[M^N_t]=2u^N_t$ by exchangeability. 
To complete the step, it only remains to prove that $\E[((K^N_t)^{\eta/2}(H^N_t)^{1/4})^{2/\eta}]\leq C_\eta$.
But $K^N_t \leq 2 \| \raa(\nu^N_t,W^1_t)\|^2+2\|\raa(f_t,W^1_t)\|^2 \leq C(m_{2+\gamma}(f_t+\nu^N_t)+|W^1_t|^2)$
by Lemma \ref{f2}
and $H^N_t\leq  1+|V^1_t|^{2\gamma} +|W^1_t|^{2\gamma}+m_{2\gamma}(\mu^N_t+\nu^N_t)$.
Observing that $\E[(m_{2\gamma}(\mu^N_t+\nu^N_t))^p ]\leq \E[|V^1_t|^{2\gamma p}+|W^1_t|^{2\gamma p}]$
by H\"older's inequality (if $p\geq 1$), that $W^{1,N}_t\sim f_t$ and recalling that
$\sup_{[0,\infty)}m_p(f_t) + \sup_{N\geq 2} \sup_{[0,\infty)}\E[|V^{1,N}_t|^p]<\infty$
for all $p>2$ (see Subsection \ref{set}), we conclude that $K^N_t$ and $H^N_t$
have uniformly bounded moments of all orders, so that finally, 
$\E[((K^N_t)^{\eta/2}(H^N_t)^{1/4})^{2/\eta}]\leq C_\eta$.

\vip

{\it Step 6.} From Steps 1 and 5,
$(d/dt)u^N_t\leq 3\E[I^N_t]+3\E[K^N_t]+C_\eta\sqrt{u^N_t(\E[J^N_t] + \E[K^N_t]^{1-\eta})}$. Using Steps 3 and 4,
we see that $\E[J^N_t] + \E[K^N_t]+ \E[K^N_t]^{1-\eta} \leq \delta_{\eta,T,N}$, where 
\vip
(i) $\delta_{\eta,T,N}= C_{\eta,T} (N^{\eta-1/4})^{1-\eta}\leq C_{\eta,T} N^{2\eta-1/4}$ in general
(with $C_{\eta,T}=C_\eta (1+T)^{1/2}$ if $\gamma=0$);
\vip
(ii) $\delta_{\eta,T,N}= C_{\eta,T} (N^{\eta-1/2})^{1-\eta}\leq N^{2\eta-1/2}$ if $H(f_0)<\infty$
(with $C_{\eta,T}=C_\eta (1+T)^{1/2}$ if $\gamma=0$). 
\vip
Using finally Step 2, we end with 
$$
\frac d{dt} u^N_t\leq 3\E[\Gamma_{\e_N}(\zeta^N_t)]+\delta_{\eta,T,N} + C \sqrt{u^N_t \delta_{\eta,T,N}}.
$$

\vip

{\it Step 7.} Assume that $\gamma=0$. By Proposition \ref{f4}-(i) (recall that $\e_N=N^{-1}$),
$$
\E[\Gamma_{\e_N}(\zeta^N_t)]\leq C \sqrt{\e_N} \E[1+m_2(\mu^N_t+\nu^N_t)]^{1/2}
=C \sqrt{\e_N} \E[|V^{1,N}_t|^2+|W^{1,N}_t|^2]^{1/2} \leq C\sqrt{\e_N}\leq C \delta_{\eta,T,N}.
$$
Thus 
$(d/dt)u^N_t\leq C \delta_{\eta,T,N} +  C \sqrt{ u^N_t \delta_{\eta,T,N}}\leq C \sqrt{\delta_{\eta,T,N}^2+ 
u^N_t \delta_{\eta,T,N}}$. Integrating this differential inequality, 
we deduce that $\sup_{[0,T]}u^N_t \leq C(1+T)^2(u_0^N +\delta_{\eta,T,N})$,
from which the conclusion follows.

\vip

{\it Step 8.} Assume next that $\gamma \in (0,1]$ and recall that $\alpha>\gamma$. 
By Proposition \ref{f4}-(ii), for all $M>0$,
\begin{align*}
\Gamma_{\e_N}(\zeta^N_t)\leq& C \sqrt{\e_N}(1+m_{2+\gamma}(\mu^N_t+\nu^N_t))^{1/2}
+ M \int_{\rd\times\rd} |v-w|^2\zeta^N_t(dv,dw) \\
&+ C e^{-\kappa M^{\gamma/\alpha}}(m_{2+\gamma}(\mu^N_t)+\cE_\alpha(\nu^N_t)).
\end{align*}
We have $\E[m_{2+\gamma}(\mu^N_t+\nu^N_t)]=\E[|V^1_t|^{\gamma+2}]+m_{2+\gamma}(f_t)\leq C$,
see Subsection \ref{set}. Also,
it holds  that $\E[\cE_\alpha(\nu^N_t)]=\cE_\alpha(f_t)\leq C$. Finally, 
$\E[\int_{\rd\times\rd} |v-w|^2\zeta^N_t(dv,dw)]=u^N_t$. All in all, we have checked that 
$\E[\Gamma_{\e_N}(\zeta^N_t)]\leq C \sqrt{\e_N} + C e^{-\kappa M^{\gamma/\alpha}} + M u^N_t$. 
Recalling Step 6, using that $\sqrt{\e_N}\leq C \delta_{\eta,T,N}$ and that $\sqrt{xy}\leq x+y$, we conclude that
$$
\frac{d}{dt} u^N_t \leq C \delta_{\eta,T,N}+(M+C)u^N_t+ C e^{-\kappa M^{\gamma/\alpha}},
$$
whence $u^N_t \leq  [u_0^N+C T \delta_{\eta,T,N}+ CT e^{- \kappa M^{\gamma/\alpha}}]e^{(M+C)T}$.
As usual, we make the choice $M=[\kappa^{-1}\log(1+1/[u^N_0+\delta_{\eta,T,N}])]^{\gamma/\alpha}$, for which
$e^{-\kappa M^{\alpha/\gamma}}\leq u^N_0+\delta_{\eta,T,N}$, and this leads us to
$$
u^N_t \leq  C_T [u_0^N+\delta_{\eta,T,N}]\exp(T[\kappa^{-1}\log(1+1/[u_0^N+\delta_{\eta,T,N}])]^{\gamma/\alpha}+CT)
\leq C_{\eta,T} [u_0^N+\delta_{\eta,T,N}]^{1-\eta}, 
$$
because $\gamma<\alpha$.
We conclude that $u^N_t\leq C_{\eta,T} (u^N_0+N^{2\eta-1/4})^{1-\eta}\leq C_{\eta,T} (u^N_0+N^{-1/4})^{1-9\eta}$
in general and $u^N_t\leq C_{\eta,T} (u^N_0+N^{2\eta-1/2})^{1-\eta}\leq C_{\eta,T} (u^N_0+N^{-1/2})^{1-5\eta}$
if $H(f_0)<\infty$. 
\end{proof}

\subsection{A quantified law of large numbers for non independent variables}
Here we check the following result, to be applied soon to the family $W^{i,N}_t$.

\begin{lem}\label{untiers}
Let $N\geq 2$, $\mu \in \cP_5(\rd)$, $\eta \in (0,1)$ and $\kappa>0$.
Consider an exchangeable family $W_1,\dots,W_N$ of $\rd$-valued random variables such that for all $K=1,\dots,N$,
there are some i.i.d. $\mu$-distributed random variables $Z^K_1,\dots,Z^K_K$ such that
$\max_{i=1,\dots,K}\E[|W_i-Z^{K}_i|^2]\leq \kappa K N^{\eta-1}$. Then 
$$
\E\Big[\cW_2^2\Big( \frac 1 N \sum_1^N \delta_{W_i},\mu \Big)\Big]
\leq \frac{C (1+m_5(\mu)^{2/5}+\kappa)}{N^{(1-\eta)/3}}, 
$$
where $C$ is a universal constant.
\end{lem}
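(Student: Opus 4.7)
The plan is to introduce a blocking parameter $K\in\{1,\ldots,N\}$ (to be optimized at the end), to partition the index set into blocks of size roughly $K$, and to apply the hypothesis within each block. Triangulating through an intermediate ``blockwise i.i.d.\ $\mu$'' empirical measure $\nu^Y$ will split the error into a coupling cost (controlled directly by the hypothesis) and a within-block statistical error (controlled by the Fournier--Guillin rate for i.i.d.\ samples). Optimizing $K$ will yield the claimed rate $N^{-(1-\eta)/3}$.

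\vip

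More concretely, set $J=\lfloor N/K\rfloor$ and partition $\{1,\ldots,N\}$ into $J$ blocks $I_1,\ldots,I_J$ of size $K$ together with one remainder block $I_{J+1}$ of size $K_r<K$ (possibly empty). Exchangeability of $(W_1,\ldots,W_N)$ implies that, for each $j$, the law of $(W_i)_{i\in I_j}$ coincides with that of $(W_1,\ldots,W_{|I_j|})$, so the hypothesis applied with parameter $|I_j|\le K$, together with a regular conditional distribution argument, produces an i.i.d.\ family $(Y_i^j)_{i\in I_j}\sim\mu$ coupled with $(W_i)_{i\in I_j}$ and satisfying $\max_{i\in I_j}\E[|W_i-Y_i^j|^2]\le \kappa K N^{\eta-1}$. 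Enlarging the probability space if necessary, I make the blockwise couplings conditionally independent given $(W_1,\ldots,W_N)$, set $Y_i=Y_i^j$ for $i\in I_j$ and $\nu^Y=N^{-1}\sum_1^N \delta_{Y_i}$.

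\vip

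Starting from
$$
\cW_2^2\Big(\frac{1}{N}\sum_1^N\delta_{W_i},\mu\Big)\le 2\,\cW_2^2\Big(\frac{1}{N}\sum_1^N\delta_{W_i},\nu^Y\Big)+2\,\cW_2^2(\nu^Y,\mu),
$$
I bound the first term by $\frac{2}{N}\sum_1^N |W_i-Y_i|^2$, whose expectation is at most $2\kappa K N^{\eta-1}$. For the second, convexity of $\mu'\mapsto\cW_2^2(\mu',\mu)$ combined with the identity $\nu^Y=\sum_j (|I_j|/N)\nu^{Y,j}$, where $\nu^{Y,j}=|I_j|^{-1}\sum_{i\in I_j}\delta_{Y_i^j}$, gives $\cW_2^2(\nu^Y,\mu)\le \sum_j (|I_j|/N)\cW_2^2(\nu^{Y,j},\mu)$; since each $(Y_i^j)_{i\in I_j}$ is i.i.d.\ $\mu$-distributed, \cite[Theorem 1]{fgui} in dimension $3$ yields $\E[\cW_2^2(\nu^{Y,j},\mu)]\le C(1+m_5(\mu)^{2/5})|I_j|^{-1/2}$. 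Using $\sum_j |I_j|\cdot|I_j|^{-1/2}\le JK^{1/2}+K_r^{1/2}\le C N K^{-1/2}$, I arrive at
$$
\E\Big[\cW_2^2\Big(\frac{1}{N}\sum_1^N\delta_{W_i},\mu\Big)\Big]\le 2\kappa K N^{\eta-1}+C(1+m_5(\mu)^{2/5})K^{-1/2}.
$$
The choice $K\asymp N^{2(1-\eta)/3}$ (truncated to $\{1,\ldots,N\}$) balances the two terms at order $\kappa^{1/3}(1+m_5(\mu)^{2/5})^{2/3}N^{-(1-\eta)/3}$, which Young's inequality $\kappa^{1/3}a^{2/3}\le \kappa/3+2a/3$ turns into the additive form $C(1+m_5(\mu)^{2/5}+\kappa)N^{-(1-\eta)/3}$ required by the statement.

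\vip

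The most delicate step is the coupling construction in the second paragraph: transferring the hypothesis coupling from the canonical block $\{1,\ldots,K\}$ to every block via exchangeability and regular conditional distributions, then splicing the blockwise couplings together on a common probability space while preserving that each $(Y_i^j)_{i\in I_j}$ remains i.i.d.\ $\mu$ marginally. Once this measure-theoretic setup is in place, the convexity bound and the Fournier--Guillin input are routine, and the optimization over $K$ is elementary.
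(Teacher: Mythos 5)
Your proof is correct, and its skeleton is the same as the paper's: partition into blocks of size $K$, use convexity of $\cW_2^2$ with respect to mixtures, invoke \cite[Theorem 1]{fgui} on i.i.d.\ blocks, and optimize $K\asymp N^{2(1-\eta)/3}$ to balance $\kappa K N^{\eta-1}$ against $K^{-1/2}$. The one genuine divergence is how the hypothesis coupling is exploited. You transfer the coupling to \emph{every} block: exchangeability plus a regular conditional distribution argument produces, on an enlarged space, a family $(Y_i)_{i\leq N}$ that is i.i.d.\ $\mu$ within each block, and you then compare $\mu_N$ to the auxiliary empirical measure $\nu^Y$ and $\nu^Y$ to $\mu$. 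This is legitimate (the splicing only needs the per-block joint laws, and the convexity bound for $\cW_2^2(\nu^Y,\mu)$ is pathwise, so no independence across blocks is required), but it is where all the measure-theoretic work sits, as you note. The paper avoids this entirely: by convexity and exchangeability it first reduces $\E[\cW_2^2(\mu_N,\mu)]$ to $\E[\cW_2^2(\mu_K,\mu)]$ plus a crude remainder-block penalty of order $(m_2(\mu)+\kappa)K/N$, where $\mu_K=K^{-1}\sum_1^K\delta_{W_i}$, and only then uses the hypothesis coupling, once, on the first $K$ indices exactly as stated. So the paper works with laws of block empirical measures rather than with spliced couplings, which is lighter; your version buys a slightly sharper treatment of the remainder block (you apply \cite{fgui} to it rather than a crude second-moment bound), though this makes no difference to the final rate. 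Two small simplifications available to you: with the explicit choice $K=\lfloor N^{2(1-\eta)/3}\rfloor$ the bound $2\kappa KN^{\eta-1}+C(1+m_5(\mu)^{2/5})K^{-1/2}\leq C(1+m_5(\mu)^{2/5}+\kappa)N^{-(1-\eta)/3}$ is immediate, so the exact optimization and the Young inequality step are unnecessary; and for the (possibly very small) remainder block you could also just bound $\E[\cW_2^2(\nu^{Y,J+1},\mu)]$ by $Cm_2(\mu)$, avoiding any discussion of \cite{fgui} for tiny sample sizes.
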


\begin{proof} We divide the proof into four steps.

\vip

{\it Step 1.} We recall the well-known fact that for $f,f',g,g'\in\cP_2(\rd)$ and $\lambda\in(0,1)$, it holds that
$\cW_2^2(\lambda f + (1-\lambda) g,\lambda f' + (1-\lambda) g')\leq \lambda \cW_2^2(f,g)+(1-\lambda)\cW_2^2(f',g')$.
Indeed, consider $X\sim f$ and $Y\sim g$ such that $\E[|X-Y|^2]=\cW_2^2(f,g)$,  $X'\sim f'$ and $Y'\sim g'$ 
such that $\E[|X'-Y'|^2]=\cW_2^2(f',g')$, and $U\sim$ Bernoulli$(\lambda)$, with $(X,Y),(X',Y'),U$ independent.
Then $Z:=UX+(1-U)Y\sim \lambda f + (1-\lambda) g$, $Z':=UX'+(1-U)Y'\sim \lambda f' + (1-\lambda) g'$, and
one easily verifies that $\E[|Z-Z'|^2]=\lambda\E[|X-Y|^2]+(1-\lambda)\E[|X'-Y'|^2]
=\lambda \cW_2^2(f,g)+(1-\lambda)\cW_2^2(f',g')$.

\vip

{\it Step 2.}
For $K\in\{1,\dots, N \}$, we set $\mu_K=K^{-1}\sum_{i=1}^K \delta_{W_i}$. We prove in this step that 
$$
\E[\cW_2^2(\mu_N,\mu)] \leq \E[\cW_2^2(\mu_K,\mu)]+  \frac {(6m_2(\mu)+4\kappa)K} N.
$$
To this end, we set $R=\lfloor N/K\rfloor$ and we assume that $RK<N$, the other case being easier 
(no need to introduce $\nu^N_{R+1}$).
We introduce, for $k=1,\dots,R$, $\nu^N_k=K^{-1}\sum_{i=(k-1)K+1}^{kK}\delta_{W_{i}}$,
as well as $\nu^N_{R+1}=(N-RK)^{-1}\sum_{i=RK+1}^N \delta_{W_i}$. 
We then write $\mu_N= K N^{-1} \sum_{k=1}^R \nu^N_k + (N-RK)N^{-1} \nu^N_{R+1}$
and we use Step 1 to obtain $\cW_2^2(\mu_N,\mu)\leq K N^{-1} \sum_{k=1}^R\cW_2^2(\nu^N_k,\mu) + (N-RK)N^{-1}
\cW_2^2(\nu^N_{R+1},\mu)$. By exchangeability, we thus find
$$
\E[\cW_2^2(\mu_N,\mu)] \leq \frac {RK} N \E[\cW_2^2(\nu^N_1,\mu)]+ \frac{N-RK}{N}\E[\cW_2^2(\nu^N_{R+1},\mu)].
$$
The conclusion follows, because $RK\leq N$, because $\nu^N_1=\mu_K$, because 
$N-RK\leq K$ and because $\E[\cW_2^2(\nu^N_{R+1},\mu)]\leq 2m_2(\mu)+2\E[|W_1|^2]
\leq 2m_2(\mu)+4\E[|Z_1^1|^2]+4\E[|W_1-Z_1^1|^2]\leq 6m_2(\mu)+4\kappa$.

\vip

{\it Step 3.} We then introduce $\zeta_K=K^{-1}\sum_{i=1}^K \delta_{Z^K_i}$. 
Since the $Z^K_i$'s are i.i.d. and $\mu$-distributed, we know from 
\cite[Theorem 1]{fgui} (with $p=2$, $d=3$ and $q=5$) that for all $K=1,\dots,N$,
$\E[\cW_2^2(\zeta_K,\mu)]\leq C (m_5(\mu))^{2/5} K^{-1/2}$. Next, we have
$\E[\cW_2^2(\mu_K,\zeta_K)]\leq K^{-1}\sum_1^K \E[|W_i-Z^K_i|^2]\leq \kappa K N^{\eta-1}$. Consequently,
$\E[\cW_2^2(\mu_K,\mu)]\leq C (m_5(\mu))^{2/5} K^{-1/2} + 2 \kappa K N^{\eta-1}$.

\vip

{\it Step 4.} Gathering Steps 2 and 3, we find that for all $K\in\{1,\dots,N\}$,
$$
\E[\cW_2^2(\mu_N,\mu)] \leq \frac{C (m_5(\mu))^{2/5}}{\sqrt K} + \frac{2\kappa K}{N^{1-\eta}}
+  \frac {(6m_2(\mu)+4\kappa)K} N \leq C(1+ (m_5(\mu))^{2/5} + \kappa ) 
\Big[\frac 1 {\sqrt{K}}+\frac{K}{N^{1-\eta}} \Big].
$$
Choosing $K=\lfloor N^{2(1-\eta)/3}\rfloor$ completes the proof.
\end{proof}

\subsection{Conclusion}
 
We now have all the weapons to prove Theorem \ref{mr2}, except the time uniformity in the Maxwell
case. We start with the case of hard potentials.

\begin{proof}[Proof of Theorem \ref{mr2}-(ii)]
We thus assume that $\gamma \in (0,1]$ and we fix $T>0$ and $\eta \in (0,1)$.
We recall that $\mu^N_t=N^{-1}\sum_1^N \delta_{V^{i,N}_t}$ and $\nu^N_t=N^{-1}\sum_1^N \delta_{W^{i,N}_t}$ and we write
$\cW_2^2(\mu^N_t,f_t)\leq 2 \cW_2^2(\mu^N_t,\nu^N_t)+2\cW_2^2(\nu^N_t,f_t)$.
Lemma \ref{lf} (together with the fact that $\sup_{t\geq 0} m_5(f_t)<\infty$) allows us to apply 
Lemma \ref{untiers} to obtain $\sup_{[0,T]} \E[\cW_2^2(\nu^N_t,f_t)] \leq C_{\eta,T} N^{(\eta-1)/3}
\leq C_{\eta,T} N^{\eta-1/3}$.
Next, we write $\E[\cW_2^2(\mu^N_t,\nu^N_t)]\leq N^{-1}\sum_1^N\E[|V^{i,N}_t-W^{i,N}_t|^2]=
\E[|V^{1,N}_t-W^{1,N}_t|^2]$. We conclude from Lemma \ref{big} that $\sup_{[0,T]}\E[\cW_2^2(\mu^N_t,\nu^N_t)]$
is controlled by $C_{\eta,T}(\E[|V^{1,N}_0-W^{1,N}_0|^2]+N^{-1/4})^{1-\eta}$ 
in general and by $C_{\eta,T}(\E[|V^{1,N}_0-W^{1,N}_0|^2]+N^{-1/2})^{1-\eta}$ if $H(f_0)<\infty$.
By points (a) and (b) stated at the end of Subsection \ref{set},
$\E[|V^{1,N}_0-W^{1,N}_0|^2]=\E[N^{-1} \sum_1^N |V^{i,N}_0-W^{i,N}_0|^2]=\E[\cW_2^2(\mu^N_0,\nu^N_0)]
\leq 2 \E[\cW_2^2(\mu^N_0,f_0)]+2\E[\cW_2^2(\nu^N_0,f_0)]$. And $\E[\cW_2^2(\nu^N_0,f_0)]\leq C N^{-1/2}$
by \cite[Theorem 1]{fgui}, because $(W_0^{i,N})_{i=1,\dots,N} \sim f_0^{\otimes N }$ and $m_5(f_0)<\infty$.
All in all, we can bound $\sup_{[0,T]}\E[\cW_2^2(\mu^N_t,f_t)]$ by 
$C_{\eta,T}(\E[\cW_2^2(\mu^N_0,f_0)]+N^{-1/4})^{1-\eta}$, and even
by $C_{\eta,T}(\E[\cW_2^2(\mu^N_0,f_0)]+N^{-1/3})^{1-\eta}$ if $H(f_0)<\infty$.
\end{proof}

Proceeding similarly, we find the following weak version of Theorem \ref{mr2}-(i).

\begin{thm}\label{maxweak}
Assume that $\gamma=0$. Fix $f_0 \in \cP_2(\rd)$ and consider the 
corresponding unique weak solution $(f_t)_{t\geq 0}$ to \eqref{HL3D}.
For each $N\geq 2$, consider an exchangeable $(\rd)^N$-valued random variable $(V^{i,N}_0)_{i=1,\dots,N}$
and the corresponding unique solution $(V_t^{i,N})_{i=1,\dots,N,t\geq 0}$ to \eqref{ps}. Set
$\mu^N_t=N^{-1}\sum_1^N \delta_{V^{i,N}_t}$. Assume that for all $p\geq 2$,
$M_p:=m_p(f_0)+\sup_{N\geq 2}\E[|V^{1,N}_0|^p]<\infty$.
For all $\eta \in (0,1/4)$, there is a constant $C_\eta$ depending
only on $\eta$, on (some upperbounds of) $\{M_p,p\geq 2\}$ and on (some upperbound of) $H(f_0)$ 
when it is finite such that
$$
\E[\cW_2^2(\mu^N_t,f_t)]\leq \begin{cases}
C_\eta(1+t)^{5/2}(\E[\cW_2^2(\mu^N_0,f_0)]+N^{\eta-1/4}) & \hbox{in general,}\\
C_\eta(1+t)^{5/2}(\E[\cW_2^2(\mu^N_0,f_0)]+ N^{\eta-1/3})& \hbox{if } H(f_0)<\infty.
\end{cases}
$$
\end{thm}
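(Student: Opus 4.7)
The plan is to mimic exactly the proof of Theorem \ref{mr2}-(ii), but keeping careful track of the fact that in the Maxwell case ($\gamma=0$) all the quantitative estimates from Lemmas \ref{lf}, \ref{big} and \ref{LLN} come with polynomial-in-$T$ constants (rather than exponential ones), so the final bound also only grows polynomially in $t$.

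First, I would split
\[
\cW_2^2(\mu^N_t,f_t) \le 2\cW_2^2(\mu^N_t,\nu^N_t)+2\cW_2^2(\nu^N_t,f_t),
\]
where $\nu^N_t=N^{-1}\sum_1^N\delta_{W^{i,N}_t}$ is the empirical measure of the coupled Landau processes constructed in Lemma \ref{treschiant}. For the first piece, by exchangeability $\E[\cW_2^2(\mu^N_t,\nu^N_t)]\le \E[|V^{1,N}_t-W^{1,N}_t|^2]$, and Lemma \ref{big} (the $\gamma=0$ lines) gives directly the bound $C_\eta(1+t)^{5/2}(\E[|V^{1,N}_0-W^{1,N}_0|^2]+N^{\eta-1/4})$ in general, and $C_\eta(1+t)^{5/2}(\E[|V^{1,N}_0-W^{1,N}_0|^2]+N^{\eta-1/2})$ when $H(f_0)<\infty$.

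For the second piece, I would apply Lemma \ref{untiers} to the family $(W^{i,N}_t)_{i=1,\dots,N}$ with target measure $\mu=f_t$. The hypothesis of that lemma is supplied by Lemma \ref{lf}, which in the Maxwell case gives $\E[|W^{i,N}_t-Z^{i,N,K}_t|^2]\le C_\eta t \,K N^{\eta-1}$; so we apply Lemma \ref{untiers} with $\kappa = C_\eta t$. Since $\sup_{s\ge 0}m_5(f_s)<\infty$ (from the moment propagation recalled in Subsection \ref{set}), this yields
\[
\E[\cW_2^2(\nu^N_t,f_t)]\le C_\eta(1+t)\, N^{-(1-\eta)/3}\le C_\eta(1+t)\, N^{\eta-1/3},
\]
which is absorbed by the bounds coming from Lemma \ref{big} in both regimes (since $\eta-1/3\le \eta-1/4$, and in the entropic case $N^{\eta-1/3}$ is already the dominant rate).

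It only remains to relate $\E[|V^{1,N}_0-W^{1,N}_0|^2]$ to $\E[\cW_2^2(\mu^N_0,f_0)]$. By points (a)--(b) at the end of Subsection \ref{set}, $\E[|V^{1,N}_0-W^{1,N}_0|^2]=\E[\cW_2^2(\mu^N_0,\nu^N_0)]\le 2\E[\cW_2^2(\mu^N_0,f_0)]+2\E[\cW_2^2(\nu^N_0,f_0)]$, and $\E[\cW_2^2(\nu^N_0,f_0)]\le CN^{-1/2}$ by \cite[Theorem 1]{fgui} since $(W^{i,N}_0)_{i\le N}\sim f_0^{\otimes N}$ and $m_5(f_0)\le M_5<\infty$. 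Plugging this back into the estimates from the previous paragraph and collecting terms produces the two announced inequalities. There is no real obstacle: every lemma has already been proved, and the only point requiring a bit of attention is verifying that all constants appearing in the Maxwell case indeed grow at most polynomially in $t$ (the factor $(1+t)^{5/2}$ from Lemma \ref{big} dominates the milder $(1+t)$ factor coming from the LLN step).
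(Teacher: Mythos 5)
Your proposal is correct and is essentially the paper's own argument: the paper introduces this statement with ``proceeding similarly'' to the proof of Theorem \ref{mr2}-(ii), i.e.\ the same decomposition $\cW_2^2(\mu^N_t,f_t)\le 2\cW_2^2(\mu^N_t,\nu^N_t)+2\cW_2^2(\nu^N_t,f_t)$, with Lemma \ref{big} (Maxwell lines), Lemma \ref{lf} feeding Lemma \ref{untiers} with $\kappa=C_\eta t$, and the initial-data comparison via points (a)--(b) of Subsection \ref{set} and \cite[Theorem 1]{fgui}. Your bookkeeping of the polynomial-in-$t$ constants and the absorption of the rates $N^{-1/2}$, $N^{\eta-1/3}$, $N^{\eta-1/2}$ into $N^{\eta-1/4}$ (resp.\ $N^{\eta-1/3}$) is accurate, so nothing is missing.
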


\section{Uniform convergence to equilibrium in the Maxwell case}\label{secmax}

We now prove, when $\gamma=0$, the uniform (in $N$) convergence to equilibrium
of the particle system, following the arguments of Rousset \cite{r}. We will easily 
deduce the time-uniformity of the propagation of chaos.

\vip

In the whole section, we assume that $\gamma=0$.
For $N\geq 2$ and for $F_N$ an exchangeable law on $(\rd)^N$, we call $\bL^N(F_N)\in \cP(C([0,\infty), (\rd)^N)$
the law of the solution $(V^{i,N}_t)_{i=1,\dots,N,t\geq 0}$ to \eqref{ps}
with $(V^{i,N}_0)_{i=1,\dots,N}\sim F_N$. We also put 
$\bL^N_t(F_N)=\cL((V^{i,N}_t)_{i=1,\dots,N}) \in \cP((\rd)^N)$ for each $t\geq 0$.
We introduce
$$
\cS_N=\Big\{(v_1,\dots,v_N)\in (\rd)^N \; : \; N^{-1}\sum_1^N v_i=0 , \; N^{-1}\sum_1^N |v_i|^2=1\Big\}.
$$

\begin{rk}\label{equi}
The uniform distribution on $S_N$ is invariant:
$\bL^N_t(\cU(S_N))=\cU(S_N)$ for all $t\geq 0$.
\end{rk}

This observation is classical and actually holds true for any value of $\gamma \in [0,1]$.
To give a precise reference, let us mention that in 
\cite[Theorem 4.2-(ii)]{c}, Carrapatoso shows that under some conditions on $F_N\in\cP(S_N)$,
$\cW_1(\bL^N_t(F_N),\cU(S_N))$ tends to $0$ as $t\to \infty$, which implies that $\cU(S_N)$ is invariant.

\begin{thm}\label{rouss}
Fix $N\geq 7$ and some exchangeable
$(V^{i,N}_0)_{i=1,\dots,N}\sim F_N \in \cP(\cS_N)$. For all $p>0$,
there is a constant $C_p$ depending only on $p$ such that if $N\geq 6+2p$,
for all $t\geq 0$,
\begin{align*}
\frac 1 N\cW_2^2(\bL^N_t(F_N),\cU(S_N)) \leq& 
\min\Big\{\frac 1 N\cW_2^2(F_N,\cU(S_N)), \frac{ C_p \E[1+|V^{1,N}_0|^{8+4p}]^{1/2}}{(1+t)^{p}} \Big\}.
\end{align*}
\end{thm}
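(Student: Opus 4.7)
The plan is a coupling argument between the particle system started from $F_N$ and a copy started from the invariant measure $\cU(\cS_N)$ (invariant by Remark \ref{equi}), following Rousset \cite{r}. The first term in the minimum will come from a Tanaka--type monotonicity of $\cW_2$, and the second from a Rousset--type \emph{strict} contraction producing polynomial decay.

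On a single probability space I would construct two exchangeable solutions $(V^{i,N}_t)_{i=1,\ldots,N}$ and $(\tilde V^{i,N}_t)_{i=1,\ldots,N}$ to \eqref{ps}, with $(V^{i,N}_0)\sim F_N$ and $(\tilde V^{i,N}_0)\sim \cU(\cS_N)$ coupled so that $N^{-1}\sum_i\E[|V^{i,N}_0-\tilde V^{i,N}_0|^2]=N^{-1}\cW_2^2(F_N,\cU(\cS_N))$. The driving Brownians are coupled via the Givens--Shortt rotation of Lemma \ref{tictactoc}: starting from $(\beta^{i,N})_i$ of Lemma \ref{reps} driving $(V^{i,N}_t)_i$, the drivers of $(\tilde V^{i,N}_t)_i$ would be
$$
\tilde\beta^{i,N}_t=\int_0^t U_{\e_N}\bigl(a(\mu^N_s,V^{i,N}_s),\,a(\tilde\mu^N_s,\tilde V^{i,N}_s)\bigr)\,d\beta^{i,N}_s,
$$
in the spirit of Lemma \ref{treschiant}, where $\tilde\mu^N_s=N^{-1}\sum_i\delta_{\tilde V^{i,N}_s}$. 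By invariance, $(\tilde V^{i,N}_t)_i\sim\cU(\cS_N)$ for all $t\geq 0$, hence setting $u^N_t=N^{-1}\E\sum_i|V^{i,N}_t-\tilde V^{i,N}_t|^2$, one has $N^{-1}\cW_2^2(\bL^N_t(F_N),\cU(\cS_N))\leq u^N_t$. An It\^o computation per index combined with Proposition \ref{f4}-(i) applied to $R^N_t=N^{-1}\sum_i\delta_{(V^{i,N}_t,\tilde V^{i,N}_t)}\in\cH(\mu^N_t,\tilde\mu^N_t)$ would yield $(d/dt)u^N_t\leq \E[\Gamma_{\e_N}(R^N_t)]\leq C\sqrt{\e_N}(1+m_2(\mu^N_t+\tilde\mu^N_t))^{1/2}$. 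Letting $\e_N\downarrow 0$ gives the monotonicity $u^N_t\leq u^N_0=N^{-1}\cW_2^2(F_N,\cU(\cS_N))$: this is the first term in the minimum.

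For the polynomial decay, I would sharpen Proposition \ref{f4}-(i) into a strict contraction adapted from Rousset \cite{r}. When $\gamma=0$ the pointwise term $\Delta_2$ in the proof of Proposition \ref{f4} vanishes identically, so all dissipation in $\Gamma_\e$ must come from the fact that $U_\e$ is nontrivial. Exploiting dimension $3$ and the structure $a(v)=\Pi_{v^\perp}$, one should extract a gain of the form $\E[\Gamma_{\e_N}(R^N_t)]\leq -\kappa_p\,(u^N_t)^{1+1/p}/\Lambda^N_p+o_{\e_N}(1)$ with $\Lambda^N_p\leq C_p\E[1+|V^{1,N}_0|^{8+4p}]^{1/(2p)}$ uniformly in $N$ and $t$; Proposition \ref{mps} propagates the needed moment of order $8+4p$, which is also where the restriction $N\geq 6+2p$ enters (keeping the constants in the contraction harmless as $N\to\infty$). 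Integrating the resulting ODE produces $u^N_t\leq C_p\E[1+|V^{1,N}_0|^{8+4p}]^{1/2}(1+t)^{-p}$, the second bound in the minimum, and combining with the monotonicity estimate yields the theorem. The main obstacle is precisely this strict contraction lemma---quantifying the Givens--Shortt gain over the synchronous coupling uniformly in $N$ on the microcanonical sphere $\cS_N$---which is the nontrivial analytic input borrowed from \cite{r}; once it is in hand, the rest is an ODE integration and bookkeeping of moments through Proposition \ref{mps}.
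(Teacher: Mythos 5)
There is a genuine gap, and it sits exactly at the heart of the theorem. Your plan correctly identifies the two ingredients (monotonicity for the first term of the minimum, a strict dissipation estimate for the polynomial decay), but the strict dissipation estimate is only asserted: the claimed bound $\E[\Gamma_{\e_N}(R^N_t)]\leq -\kappa_p (u^N_t)^{1+1/p}/\Lambda^N_p+o_{\e_N}(1)$ is precisely the nontrivial content, and "one should extract a gain" is not a proof. Moreover the source of the gain is misidentified: when $\gamma=0$ the dissipation does not come from the Givens--Shortt rotation being nontrivial; it is the explicit pairwise term $N^{-2}\sum_{i,j}[|V^i-V^j||W^i-W^j|-(V^i-V^j)\cdot(W^i-W^j)]$, i.e. exactly the quantity that Proposition \ref{f4} throws away via $\Delta_2\leq 0$. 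The paper keeps this term through an exact a.s. identity (Lemma \ref{coup}-(iv)) and then lower-bounds it by a power of the coupled distance using Rousset's spectral inequality (Lemmas \ref{specin} and \ref{ethop}): one needs $D(\zeta^N_t)\gtrsim (1-\rho(\nu^N_t))\,(u^N_t)^{q/(q-1)}$ up to moment factors, and then the factor $\E[(1-\rho(\nu^N_t))^{-2q}]$ must be controlled. That control uses that the second coupled system is exactly $\cU(S_N)$-distributed at each time and the Wishart eigenvalue computation of Lemma \ref{ums}-(iii); this is where the restriction $N\geq 6+2p$ enters, not through the moment propagation of Proposition \ref{mps} as you claim.

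There is also a structural problem with your coupling. To invoke invariance of $\cU(S_N)$ you need the second family $(\tilde V^{i,N}_t)_i$ to be, jointly in $i$, a copy of the conservative particle system \eqref{ps}. Driving each $\tilde V^{i,N}$ by a rotated version of the aggregated driver $\beta^{i,N}$ (as in Lemma \ref{treschiant}) does not give this: the $\beta^{i,N}$ are not independent, the joint law of such a system is not that of \eqref{ps}, and in particular momentum and energy are not conserved pathwise, so $\tilde\mu^N_t$ need not stay on $S_N$ and $\bL^N_t(\cU(S_N))=\cU(S_N)$ cannot be used. This is why the paper couples at the level of the pairwise antisymmetric Brownian motions $B^{ij}$, via the explicit matrices $A(x,y)$ of Lemma \ref{mata} satisfying $A A^*=a(y)$, $A(-x,-y)=A(x,y)$ and the cancellation $(\sigma(x)-A^*(x,y))(x-y)=0$; since $A$ is discontinuous, even existence of this coupled system requires an $\e$-regularized weak solution (Krylov) plus a tightness/limit argument (Lemma \ref{coup}). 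So while your overall strategy is the right one in spirit, both the construction of an admissible coupling and the quantitative contraction lemma are missing, and these are the substance of the proof.
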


Although we slightly clarify some points and although the coupling is slightly more technical
for the Landau equation, the proof closely follows \cite{r}.
In the next subsection, we recall some facts about $\cU(S_N)$.
We build a suitable coupling in Subsection \ref{sscoup} and recall Rousset's main inequality in Subsection
\ref{ssrou}. We conclude the proof of Theorem \ref{rouss} in Subsection \ref{ssconc}.
Finally, we deduce Theorem \ref{mr2}-(i) from Theorems \ref{maxweak} and \ref{rouss} in Subsection \ref{ssff}.

\subsection{The uniform law on the sphere} We will need the following facts.

\begin{lem}\label{ums}
Let $(X_1^N,\dots,X_N^N)\sim \cU(\cS_N)$. Then

\vip

(i) $\E[\cW_2^2(N^{-1}\sum_1^N\delta_{X_i^N},\cN(0,3^{-1}\Id))] \leq C N^{-1/2}$;

\vip

(ii) for all $p\geq 1$, $\E[|X_1^N|^p]\leq C_p$, where $C_p$ depends only on $p$;

\vip

(iii) if $1\leq p \leq N-4$, for $\rho_N$ the spectral radius of $M_N=N^{-1}\sum_1^N X_i^N (X_i^N)^*$,
we have $\E[(1-\rho_N)^{-p}]\leq C_p$, where $C_p$ depends only on $p$.
\end{lem}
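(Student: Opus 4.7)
The plan is to handle all three parts from a single Gaussian coupling, then read off the required estimates from standard Gaussian and Wishart facts. Let $Y_1,\dots,Y_N$ be i.i.d.\ with law $\cN(0,3^{-1}\Id)$, put $\bar Y=N^{-1}\sum Y_i$, $Z_i=Y_i-\bar Y$, $S=(N^{-1}\sum_j|Z_j|^2)^{1/2}$, and set $X_i^N=Z_i/S$. Then $(X_1^N,\dots,X_N^N)\in\cS_N$ by construction, and because $(Z_1,\dots,Z_N)$ is a centred Gaussian with isotropic covariance on the hyperplane $\{\sum v_i=0\}$, its radial projection onto $\cS_N$ is precisely $\cU(\cS_N)$. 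All three statements will be read off this coupling.

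For (i), I would use the triangle inequality
\begin{align*}
\cW_2^2\Big(N^{-1}\textstyle\sum\delta_{X_i^N},\cN(0,3^{-1}\Id)\Big)\leq 2\,\E\Big[N^{-1}\textstyle\sum|X_i^N-Y_i|^2\Big]+2\,\E\Big[\cW_2^2\Big(N^{-1}\textstyle\sum\delta_{Y_i},\cN(0,3^{-1}\Id)\Big)\Big],
\end{align*}
bound the second term by $CN^{-1/2}$ via \cite[Theorem 1]{fgui} (the Gaussian has all moments), and compute the first exactly: using $\sum Z_i=0$, a short expansion gives $N^{-1}\sum|X_i^N-Y_i|^2=(1-S)^2+|\bar Y|^2$. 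Now $\E[|\bar Y|^2]=1/N$, and since $S\geq 0$ implies $(S-1)^2\leq(S^2-1)^2$, one has $\E[(S-1)^2]\leq\E[(S^2-1)^2]=O(1/N)$ from a routine variance bound on the chi-squared sum $S^2$. The first term is therefore $O(1/N)$.

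For (ii), the same coupling gives $|X_1^N|^p=|Z_1|^p/S^p$, and Cauchy--Schwarz yields $\E[|X_1^N|^p]\leq\sqrt{\E[|Z_1|^{2p}]\,\E[S^{-2p}]}$. The Gaussian moment is uniformly bounded, while $3NS^2\sim\chi^2_{3(N-1)}$, so $\E[S^{-2p}]=(3N)^p\E[(\chi^2_{3(N-1)})^{-p}]$ is finite and of order $1$ as soon as $3(N-1)>2p$; the finitely many remaining $N$ are absorbed into $C_p$.

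Part (iii) is the main obstacle. Since $\Tr M_N=1$, the ordered eigenvalues $\lambda_1\geq\lambda_2\geq\lambda_3$ of $M_N$ sum to $1$, so $1-\rho_N=\lambda_2+\lambda_3$. Setting $\hat M=N^{-1}\sum Z_iZ_i^*$, the coupling gives $M_N=\hat M/\Tr\hat M$, hence $1-\rho_N=(\hat\lambda_2+\hat\lambda_3)/\Tr\hat M$ with $\hat\lambda_i$ the eigenvalues of $\hat M$. AM--GM then gives $\hat\lambda_2+\hat\lambda_3\geq 2\sqrt{\det\hat M/\hat\lambda_1}\geq 2\sqrt{\det\hat M/\Tr\hat M}$, so $(1-\rho_N)^{-p}\leq 2^{-p}(\Tr\hat M)^{3p/2}(\det\hat M)^{-p/2}$, and Cauchy--Schwarz reduces everything to the inverse-determinant moment of $\hat M$. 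Now $N\hat M\sim W_3(N-1,3^{-1}\Id)$, and the classical Bartlett formula
\begin{equation*}
\E\big[(\det(N\hat M))^{-q}\big]=3^{3q}\prod_{i=1}^3\frac{\Gamma((N-i)/2-q)}{\Gamma((N-i)/2)}
\end{equation*}
is finite and uniformly controlled once $N$ is large enough compared to $p$. The delicate point I expect here is recovering the sharp threshold $p\leq N-4$ stated in the lemma: the AM--GM reduction above inflates the exponent of $p$ by a factor two, and pushing the threshold all the way down likely requires working directly with the joint eigenvalue density of $\hat M$, or with inverse moments of $\hat\lambda_3$ itself, rather than with $\det\hat M$ alone.
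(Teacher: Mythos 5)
Your coupling is the same as the paper's, and parts (i) and (ii) go through: for (i) you use the identical construction $X_i^N=(Y_i-\bar Y)/S$ and the same splitting, only bounding the coupling error $\E[(1-S)^2+|\bar Y|^2]$ by a direct chi-squared variance computation where the paper instead observes (via the lower bound $\cW_2^2(f,g)\geq(\sqrt{V_f}-\sqrt{V_g})^2+|m_f-m_g|^2$) that this error is dominated by $\E[\cW_2^2(N^{-1}\sum\delta_{Y_i},\cN(0,3^{-1}\Id))]$; both give $O(N^{-1/2})$. For (ii) the paper simply cites Carrapatoso, and your direct argument via $3NS^2\sim\chi^2_{3(N-1)}$ is fine, provided you add the (trivial) remark that for the finitely many $N$ with $3(N-1)\leq 2p$ one has the deterministic bound $|X_1^N|\leq\sqrt N$, so those cases are indeed absorbed into $C_p$; also your Bartlett formula is missing the factor $2^{-3q}$, which is harmless.

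The genuine gap is in (iii), and you have correctly located it yourself: after the (correct) pointwise bound $(1-\rho_N)^{-p}\leq C_p(\Tr\hat M)^{3p/2}(\det\hat M)^{-p/2}$, splitting by Cauchy--Schwarz requires $\E[(\det\hat M)^{-p}]<\infty$, which by the Bartlett decomposition forces $p<(N-3)/2$. This proves the lemma only for $N\gtrsim 2p$, not in the stated range $1\leq p\leq N-4$, so as written the proposal does not establish the statement. The paper closes exactly this gap by keeping the trace and the determinant coupled: writing the expectation against the explicit joint eigenvalue density $g_N(\ell_1,\ell_2,\ell_3)$ of the Wishart matrix $A_N=N\hat M$, the factor $(\ell_1\ell_2\ell_3)^{-p/2}$ is absorbed into the term $(\ell_1\ell_2\ell_3)^{(N-5)/2}$ of the density, turning $g_N$ into $(\kappa_{N-p}/\kappa_N)\,g_{N-p}$, i.e.\ a change of measure to the eigenvalue law with $N-p$ in place of $N$; this only requires $g_{N-p}$ to make sense, which is where the threshold $p\leq N-4$ comes from. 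One then bounds $\E[(L_3^{N-p})^{3p/2}]\leq C_pN^{3p/2}$ using $L_3\leq\Tr A_N\sim\chi^2(3N-3)$ up to scaling, and Stirling gives $\kappa_N^{-1}\kappa_{N-p}\leq C_pN^{-3p/2}$, so the two powers of $N$ cancel. Note that the loss in your version is not purely cosmetic downstream: Theorem \ref{rouss} applies (iii) with exponent $2q=2p+2$ under the condition $N\geq 6+2p$, so with your threshold the hypothesis there would degrade to roughly $N\geq 4p+7$; the qualitative large-$N$ conclusion survives, but the stated lemma needs the change-of-measure (or an equivalent direct control of inverse moments of $\hat\lambda_2\hat\lambda_3$) rather than the Cauchy--Schwarz separation.
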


We will use twice the following observation.

\begin{rk}\label{rkd}
For any $f,g\in\cP_2(\rd)$, $\cW_2^2(f,g)\geq [(V_f)^{1/2}-(V_g)^{1/2}]^2 + |m_f-m_g|^2$,
where $m_f=\intrd vf(dv)$ and $V_f=\intrd |v-m_f|^2f(dv)$. 
\end{rk}
Indeed, 
for any  $X\sim f$ and $Y \sim g$,
$\E[|X-Y|^2]= \E[|(X-\E[X])-(Y-\E[Y])|^2]+|\E[X-Y]|^2 \geq  V_f+V_g -2(V_fV_g)^{1/2} + |m_f-m_g|^2$.

\begin{proof}[Proof of Lemma \ref{ums}] 
Consider an i.i.d. sample $(Y_1,\dots,Y_N)$ of the $\cN(0,3^{-1}\Id)$ distribution. Define
$m_N=N^{-1}\sum_1^N Y_i$, $E_N=N^{-1}\sum_1^N|Y_i-m_N|^2$ and $X^N_i=E_N^{-1/2}(Y_i-m_N)$. 
Then it is classical (see e.g. \cite[Proof of Lemma 4.3]{r}) that $(X_1^N,\dots,X_N^N)\sim \cU(\cS_N)$.

\vip

To prove (i), we set $\mu_N=N^{-1}\sum_1^N \delta_{X^N_i}$ and $\nu_N=N^{-1}\sum_1^N \delta_{Y_i}$.
We have $\E[\cW_2^2(\mu_N,\nu_N)] \leq N^{-1}\sum_1^N\E[|Y_i-X^N_i|^2]=N^{-1}\sum_1^N\E[|(Y_i-m_N)(1-1/\sqrt{E_N})
+m_N|^2]=\E[(1-\sqrt{E_N})^2+|m_N|^2]$. By Remark \ref{rkd} and since $m_{\nu_N}=m_N$ and $V_{\nu_N}=E_N$,
we conclude that  $\E[\cW_2^2(\mu_N,\nu_N)] \leq \E[\cW_2^2(\nu_N,\cN(0,3^{-1}\Id)]$,
so that $\E[\cW_2^2(\mu_N,\cN(0,3^{-1}\Id)] \leq 4\E[\cW_2^2(\nu_N,\cN(0,3^{-1}\Id)]$.
By \cite[Theorem 1]{fgui}, it holds that $\E[\cW_2^2(\nu_N,\cN(0,3^{-1}\Id)) ] \leq C N^{-1/2}$
and this proves (i).

\vip
Point (ii) has been checked by Carrapatoso \cite[Lemma 10]{c}.

\vip

We finally check (iii) (see \cite[Lemma 4.4]{r} for a less precise statement), assuming that $N\geq p+4\geq 5$.
The empirical covariance matrix $A_N=\sum_1^N (Y_i-m_N) (Y_i-m_N)^*$ classically 
(see Anderson \cite[Section 7]{an}) follows a 
Wishart$(3,N-1)$-distribution,
and $M_N=A_N/ \Tr\; A_N$. The eigenvalues $0\leq L^N_1\leq L^N_2\leq L^N_3$ of $A_N$ are known to have the
density (see Anderson \cite[Theorem 13.3.2]{an}, this uses that $3\leq N-1$) 
$$
g_N(\ell_1,\ell_2,\ell_3)=\kappa_N^{-1} (\ell_1\ell_2\ell_3)^{(N-5)/2}[(\ell_3-\ell_2)(\ell_3-\ell_1)(\ell_2-\ell_1)]
e^{-(\ell_1+\ell_2+\ell_3)/2}\indiq_{\{0<\ell_1<\ell_2<\ell_3\}},
$$
where $\kappa_N=\pi^{-9/2}2^{3(N-1)/2}\Gamma((N-1)/2)\Gamma((N-2)/2)\Gamma((N-3)/2)\Gamma(3/2)\Gamma(1)\Gamma(1/2)$.
But $1-\rho_N=(L^N_1+L^N_2)/(L^N_1+L^N_2+L^N_3)\geq 2(L^N_1L^N_2)^{1/2}/(3L^N_3)=2(L^N_1L^N_2L^N_3)^{1/2}/(3(L^N_3)^{3/2})$.
Consequently, for $p\in [1,N-4]$ (so that $3\leq N-p-1$),
\begin{align*}
\E[(1-\rho_N)^{-p}]\leq&\Big(\frac32\Big)^p \intrd \ell_3^{3p/2}(\ell_1\ell_2\ell_3)^{-p/2}g_N(\ell_1,\ell_2,\ell_3)d\ell_1
d\ell_2d\ell_3\\
=& \Big(\frac32\Big)^p \kappa_N^{-1} \kappa_{N-p} \intrd \ell_3^{3p/2}g_{N-p}(\ell_1,\ell_2,\ell_3)d\ell_1
d\ell_2d\ell_3=\Big(\frac32\Big)^p \kappa_N^{-1} \kappa_{N-p} \E[(L^{N-p}_3)^{3p/2}].
\end{align*}
But using that $L^N_3\leq \Tr\; A_N=E_N\sim\chi^2(3N-3)$,
it is not hard to verify that $\E[(L^{N}_3)^{3p/2}]\leq C_p N^{3p/2}$. We thus end
with $\E[(1-\rho_N)^{-p}] \leq C_p N^{3p/2} \kappa_N^{-1} \kappa_{N-p}$. Using the expression of $\kappa_N$ and the 
Stirling formula, we easily conclude that $\sup_{N\geq p+4} \E[(1-\rho_N)^{-p}]<\infty$ as desired.
\end{proof}

\subsection{The coupling}\label{sscoup}
Recall that $U$ was defined in \eqref{dfU}. We need to use $U(a(x),a(y))$, which is unfortunately
not well-defined. The lemma below gives some sense to 
$A(x,y)=\sigma(y)U(a(x),a(y))$.

\begin{lem}\label{mata}
Recall that for $x\in \rd$, $\sigma(x)=|x|\Pi_{x^\perp}$ and $a(x)=|x|^2\Pi_{x\perp}$.
We can find a measurable family of $3\times 3$ matrices $(A(x,y))_{x,y\in\rd}$ 
verifying $A(-x,-y)=A(x,y)$ and
\vip
(a) $A(x,y) A^*(x,y)=a(y)$, 
\vip
(b) $\lps \sigma(x),A(x,y)\rps= |x||y|+x\cdot y$, 
\vip
(c) $(\sigma(x)-A^*(x,y))(x-y)=0$.
\end{lem}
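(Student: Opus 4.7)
The plan is to bypass the Givens--Shortt formula (which would require inverting the non-invertible matrices $a(x)$ and $a(y)$) and instead build $A(x,y)$ by hand as $A(x,y)=\sigma(y)R(x,y)$ for a carefully chosen orthogonal matrix $R(x,y)$. In the main case, where $x$ and $y$ are nonzero and linearly independent, $R(x,y)$ is defined as the rotation of $\rd$ that is the identity on the line orthogonal to $\mathrm{span}(x,y)$ and, inside this plane, rotates $x/|x|$ onto $y/|y|$ along the shorter arc. With this choice (a) is automatic, since $A(x,y)A^*(x,y)=\sigma(y)R(x,y)R(x,y)^T\sigma(y)=a(y)$.

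For (b) and (c) the key observation is that $A^*(x,y)=R(x,y)^T\sigma(y)=R(y,x)\sigma(y)$, where $R(y,x)$ is the reverse rotation. Pick an orthonormal frame $(e_1,e_2,e_3)$ with $e_1=y/|y|$, $e_2$ in $\mathrm{span}(x,y)$ and $x/|x|=\cos\theta\, e_1+\sin\theta\, e_2$. A short planar computation in these coordinates, using $\sigma(x)x=\sigma(y)y=0$, shows that both $R(y,x)\sigma(y)x$ and $-\sigma(x)y$ equal $|x||y|(-\sin^2\theta\, e_1+\sin\theta\cos\theta\, e_2)$, which yields (c). For (b), the same frame gives $\lps\sigma(x),A(x,y)\rps=|x||y|\Tr(\Pi_{x^\perp}R(y,x)\Pi_{y^\perp})=|x||y|(1+\cos\theta)=|x||y|+x\cdot y$.

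Next one treats the degenerate cases separately. When $y=0$ take $A(x,y)=0$; when $x=0$ and $y\ne 0$, or when $x,y$ are positively parallel, take $A(x,y)=\sigma(y)$; all three conditions are immediate (in the parallel case one uses $\Pi_{x^\perp}=\Pi_{y^\perp}$). The genuinely delicate case is antiparallel, $y=-\lambda x$ with $\lambda>0$. Here (a) and (c) hold for \emph{any} $A(x,y)=\sigma(y)O$ with $O$ orthogonal (one has $\sigma(y)x=\sigma(x)y=0$), and (b) reduces to the scalar condition $\Tr(O^T\Pi_{x^\perp})=0$, which is satisfied by any half-turn about an axis in $x^\perp$. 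A measurable such axis $w(x)$ is easy to write down, for instance $w(x)=(x\wedge e_3)/|x\wedge e_3|$ off the null set $\{x\parallel e_3\}$ and a fixed unit vector on it.

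The evenness $A(-x,-y)=A(x,y)$ is automatic, since $\sigma$ is even and $R(x,y)$ depends only on the unordered plane and the oriented angle, both invariant under $(x,y)\mapsto(-x,-y)$. Measurability is clear on the open set of linearly independent pairs and holds on the complement by the explicit measurable selections above. The main obstacle is really the antiparallel locus, where the plane $\mathrm{span}(x,y)$ collapses and no continuous selection of $R$ exists; this is resolved by the explicit measurable (though not continuous) choice just given, which suffices since only the three algebraic identities together with measurability are needed in the sequel.
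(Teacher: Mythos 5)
Your construction is correct and is essentially the paper's: the matrix $A(x,y)=\sigma(y)R(x,y)$ you build coincides with the paper's explicit formula $A(x,y)=-(y\cdot e_2)\,e_1e_2^*+(y\cdot e_1)\,e_2e_2^*+|y|\,e_3e_3^*$ written in an orthonormal basis with $e_1=x/|x|$ and $e_3\cdot y=0$ (including the antiparallel case, where the paper's formula amounts to your half-turn with axis $w=e_3$), and the verification of (a)--(c) is the same computation in an adapted frame. The only differences are presentational: the paper's single coordinate formula covers all $y$ at once for $x\neq 0$, so no separate parallel/antiparallel discussion is needed, whereas you are more explicit than the paper about the evenness $A(-x,-y)=A(x,y)$ and the measurable selection, which the paper merely asserts.
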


\begin{proof}
If $x=0$, it suffices to set $A(x,y)=\sigma(y)$. Else, we consider an orthonormal basis 
$(e_1,e_2,e_3)$ satisfying $e_1=x/|x|$ and $e_3 \cdot y =0$ and we set
$A(x,y)=-(y\cdot e_2) e_1 e_2^* + (y\cdot e_1) e_2e_2^* + |y|e_3e_3^*$.
To check (a), put $y_i=y\cdot e_i$, note that $y_3=0$ and that $\Id=e_1e_1^*+e_2e_2^*+e_3e_3^*$:
direct computations show that both $a(y)=|y|^2\Id-yy^*$  and $A(x,y) A^*(x,y)$
equal $y_2^2 e_1e_1^* + y_1^2e_2e_2^*+|y|^2e_3e_3^* - y_1y_2(e_1e_2^*+e_2e_1^*)$.
For point (b), one starts with $\sigma(x)=|x|\Id-|x|^{-1}xx^*=|x|(e_2e_2^*+e_3e_3^*)$,
whence $\sigma(x)A^*(x,y)=|x|(-y_2e_2e_1^*+y_1e_2e_2^*+|y|e_3e_3^*)$ and thus
$\Tr \; \sigma(x)A^*(x,y)=|x|(y_1+|y|)=|x||y|+x\cdot y$. 
For (c), one easily finds that both $\sigma(x)(x-y)$ and $A^*(x,y)(x-y)$
equal $-y_2|x|e_2$.

\vip

Since $A(x,y)$ satisfies conditions (a)-(b)-(c) with $-x$ and $-y$, it is 
possible to handle the construction in such a way that 
$A(-x,-y)=A(x,y)$. Measurability is not an issue.
\end{proof}

We now build a suitable coupling.

\begin{lem}\label{coup}
Consider two exchangeable laws $F_N$ and $G_N$ in $\cP(S_N)$. There exists an exchangeable family
$\{(V^{i,N}_t,W^{i,N}_t)_{t\geq 0},i=1,\dots,N\}$ satisfying the following properties:

\vip

(i) $\E[\sum_1^N |V^{i,N}_0-W^{i,N}_0|^2]=\cW_2^2(F_N,G_N)$;

\vip

(ii) a.s., $N^{-1}\sum_1^N |V^{i,N}_0-W^{i,N}_0|^2=\cW_2^2(N^{-1}\sum_1^N \delta_{V^{i,N}_0},N^{-1}\sum_1^N \delta_{W^{i,N}_0})
\leq 2$;

\vip

(iii) $(V^{i,N}_t)_{i=1,\cdots,N, t\geq 0} \sim \bL^N(F_N)$ and $(W^{i,N}_t)_{i=1,\cdots,N, t\geq 0} \sim \bL^N(G_N)$;

\vip

(iv) a.s., for all $t\geq 0$,
\begin{align*}
\frac 1N \sum_{i=1}^N  & |V^{i,N}_t-W^{i,N}_t|^2=\frac 1N \sum_{i=1}^N  |V^{i,N}_0-W^{i,N}_0|^2 \\
&- \frac 2{N^2} \sum_{i,j=1}^N \intot [|V^{i,N}_s-V^{j,N}_s||W^{i,N}_s-W^{j,N}_s| - 
(V^{i,N}_s-V^{j,N}_s)\cdot(W^{i,N}_s-W^{j,N}_s) ]ds.
\end{align*}
\end{lem}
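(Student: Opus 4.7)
The plan is to use the optimal exchangeable initial coupling from \cite[Proposition A.1]{fh}, then run $(V^{i,N})$ and $(W^{i,N})$ driven by the same family of Brownian motions but with the $W$-diffusion produced by the coupling matrix $A$ of Lemma \ref{mata}. Applied to $F_N$ and $G_N$, \cite[Proposition A.1]{fh} yields an exchangeable family $\{(V^{i,N}_0,W^{i,N}_0),\,i=1,\dots,N\}$ with the correct marginals and satisfying both (i) and the a.s.\ identity in (ii); the bound $\cW_2^2(\mu,\nu)\le 2$ for any $\mu,\nu\in\cP(\cS_N)$ follows at once from the product coupling, since both measures have barycenter zero and second moment one: $\int|v-w|^2\,\mu(dv)\nu(dw)=m_2(\mu)+m_2(\nu)-2(\int v\,d\mu)\cdot(\int w\,d\nu)=2$.

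For the dynamics, I take an i.i.d.\ family $(B^{ij})_{1\le i<j\le N}$ of 3D Brownian motions independent of the initial data, extend by $B^{ji}=-B^{ij}$ and $B^{ii}=0$, and let $(V^{i,N}_t)$ be the unique strong solution of \eqref{ps} provided by Proposition \ref{pswp}. Given $(V^{i,N})$, I construct $(W^{i,N}_t)$ as a weak solution to
$$
W^{i,N}_t = W^{i,N}_0 + \frac 1 N \sum_{j=1}^N\intot b(W^{i,N}_s-W^{j,N}_s)\,ds + \frac 1 {\sqrt N}\sum_{j=1}^N\intot A(V^{i,N}_s-V^{j,N}_s,\,W^{i,N}_s-W^{j,N}_s)\,dB^{ij}_s.
$$
Since $A$ is only measurable, weak existence is obtained by mollification (approximate $A$ by smooth $A^\varepsilon$, solve strongly, and extract a Skorokhod limit using the energy conservation $\sum_i|W^{i,N,\varepsilon}_t|^2=\sum_i|W^{i,N}_0|^2$, which follows just as in the proof of Proposition \ref{pswp} from $A(-x,-y)=A(x,y)$ and $AA^*=a$, cf.\ Lemma \ref{mata}-(a)). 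For (iii), the property $AA^*=a$ shows that the martingale $M^i_t:=N^{-1/2}\sum_j\intot A(\cdot,\cdot)\,dB^{ij}_s$ has matrix bracket $\intot a(\nu^N_s,W^{i,N}_s)\,ds$, matching the bracket in \eqref{ps}; martingale representation on an enlarged probability space produces 3D Brownian motions $(\tilde B^{ij})$ with $\tilde B^{ji}=-\tilde B^{ij}$ such that $M^i_t=N^{-1/2}\sum_j\intot\sigma(W^{i,N}_s-W^{j,N}_s)\,d\tilde B^{ij}_s$, so $(W^{i,N})$ solves \eqref{ps} from $G_N$; uniqueness in law (Proposition \ref{pswp}) yields $(W^{i,N})\sim\bL^N(G_N)$. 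Exchangeability of $\{(V^{i,N},W^{i,N})\}$ is inherited from the initial data and the symmetry of the coefficients under joint permutations of the pairs.

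For (iv), I apply It\^o's formula to $|V^{i,N}_t-W^{i,N}_t|^2$, sum over $i$ and divide by $N$. Since $\gamma=0$ gives $b(v)=-2v$, the $i\leftrightarrow j$ symmetrisation turns the drift contribution into $-\frac{2}{N^2}\sum_{i,j}\intot|(V^{i,N}_s-V^{j,N}_s)-(W^{i,N}_s-W^{j,N}_s)|^2\,ds$. Lemma \ref{mata}-(a),(b) gives $\|\sigma(x)-A(x,y)\|^2=\Tr\,a(x)+\Tr\,a(y)-2\lps\sigma(x),A(x,y)\rps=2|x|^2+2|y|^2-2|x||y|-2\,x\cdot y$, so the quadratic-covariation contribution is $\frac{2}{N^2}\sum_{i,j}\intot\bigl[|V^{i,N}_s-V^{j,N}_s|^2+|W^{i,N}_s-W^{j,N}_s|^2-|V^{i,N}_s-V^{j,N}_s||W^{i,N}_s-W^{j,N}_s|-(V^{i,N}_s-V^{j,N}_s)\cdot(W^{i,N}_s-W^{j,N}_s)\bigr]\,ds$. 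Expanding the square in the drift contribution and adding, the $|V^i-V^j|^2$ and $|W^i-W^j|^2$ terms cancel and the cross terms collapse to exactly the right-hand side of (iv). For the remaining martingale part, after using $B^{ji}=-B^{ij}$, $\sigma(-x)=\sigma(x)$, $A(-x,-y)=A(x,y)$ to antisymmetrise, the integrand becomes
$$
[(V^{i,N}_s-V^{j,N}_s)-(W^{i,N}_s-W^{j,N}_s)]\cdot\bigl([\sigma(V^{i,N}_s-V^{j,N}_s)-A(V^{i,N}_s-V^{j,N}_s,W^{i,N}_s-W^{j,N}_s)]\,dB^{ij}_s\bigr);
$$
using $u\cdot(M\,dB)=(M^*u)\cdot dB$ and $\sigma=\sigma^*$, this equals $\bigl([\sigma(x)-A^*(x,y)](x-y)\bigr)\cdot dB^{ij}_s$ with $x=V^{i,N}_s-V^{j,N}_s$ and $y=W^{i,N}_s-W^{j,N}_s$, and this vector vanishes identically by property (c) of Lemma \ref{mata}. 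Hence the stochastic integral is pathwise zero and (iv) holds in the strong sense claimed.

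The main obstacle is not the It\^o calculation but the construction and law identification of the $W$-dynamics: because the orthonormal basis used in the proof of Lemma \ref{mata} cannot be chosen smoothly, $A(x,y)$ is merely measurable, ruling out pathwise uniqueness; one must therefore combine a mollification-based weak existence argument with matching of quadratic variations and uniqueness in law for \eqref{ps} to obtain the required coupling.
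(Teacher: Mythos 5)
Your algebraic core (the It\^o computation with Lemma \ref{mata}-(a),(b),(c), the antisymmetrisation killing the martingale term, the initial coupling via \cite[Proposition A.1]{fh}) coincides with the paper's. The genuine gap is in the construction of the $W$-dynamics. You propose to fix the strong solution $(V^{i,N})$ of \eqref{ps} and then produce, \emph{for the same prescribed Brownian motions} $B^{ij}$, a weak solution of the $A$-driven equation by mollifying $A$ and passing to a Skorokhod limit. This cannot be completed as described: $A$ is merely measurable and the diffusion $A(x,y)A^*(x,y)=a(y)$ is degenerate (rank $2$, vanishing at $y=0$), so neither continuity nor Krylov-type ellipticity is available to identify the limit of the mollified solutions as a solution of the $A$-equation. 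Worse, once you replace $A$ by a smooth $A^\e$, the exact identities of Lemma \ref{mata} are lost: (c) fails, so the martingale term in the It\^o expansion of $N^{-1}\sum_i|V^{i}_t-W^{i,\e}_t|^2$ no longer vanishes, and (a),(b) fail, so the finite-variation part is not the clean $A$-free expression in (iv) (nor is your claimed pathwise energy conservation $\sum_i|W^{i,N,\e}_t|^2=\sum_i|W^{i,N}_0|^2$ exact, since it needs $y^*A^\e(x,y)=0$ and $\Tr\,A^\e (A^\e)^*=2|y|^2$). Controlling the resulting error terms requires convergence of $A^\e$ to $A$ along the (unidentified) limiting paths, which is exactly the circular point one must avoid. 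A related soft spot: a weak solution comes with its own driving noise, so "solve weakly with the same $B^{ij}$, conditionally on $V$" is not a standard existence statement; one has to treat the pair $(V,W)$ jointly. Finally, your law-identification of $W$ via martingale representation of the aggregated martingales $M^i$ does not by itself produce an \emph{antisymmetric, pairwise independent} family $\tilde B^{ij}=-\tilde B^{ji}$; one needs the per-pair factorisation $A(x,y)=\sigma(y)R(x,y)$ with $R$ orthogonal and $R(-x,-y)=R(x,y)$, applied to each $B^{ij}$ separately.

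The paper's route avoids all of this by keeping $A$ exact and regularising the \emph{system} instead: it adds small independent additive noises $\e\tB^i_t$, $\e\hB^i_t$ to both equations, so the full $6N$-dimensional SDE has measurable coefficients of linear growth and a uniformly elliptic diffusion, and Krylov (or Rozkosz--Slominski) gives weak existence of the coupled $\e$-system. The It\^o identity is then computed \emph{exactly} for each $\e>0$ using (a)--(c) of Lemma \ref{mata}, which produces a right-hand side free of $A$ (plus harmless $O(\e)$ terms), and one passes to the limit $\e\to0$ by tightness; the law of the $W$-component is identified because, thanks to $AA^*=a(y)=\sigma(y)\sigma^*(y)$ and $A(-x,-y)=A(x,y)$, the $\e$-equation for $W$ can be rewritten pairwise in the form \eqref{ps} (plus $\e$-noise) with another antisymmetric Brownian family, and $b,\sigma$ are continuous so this passes to the limit. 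Notably, the paper explicitly remarks that the limiting process is \emph{not} shown to solve the $A$-driven system with $\e=0$ — i.e., the object your construction aims to build is precisely what the argument is designed to circumvent. To repair your proof you should adopt this additive-noise regularisation of the joint system (or an equivalent martingale-problem formulation for the pair) rather than mollifying $A$.
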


\begin{proof} The function $A(x,y)$ cannot be continuous and this 
causes some technical difficulties.
We write $V^{i}_t=V^{i,N}_t$ and $W^{i}_t=W^{i,N}_t$ for simplicity.

\vip

{\it Step 1.} By \cite[Proposition A.1]{fh},
we can find $H_N\in\cP(S_N\times S_N)$ with marginals $F_N$ and $G_N$ such that,
for $((V^{i}_0)_{i=1,\dots,N},(W^{i}_0)_{i=1,\dots,N})\sim H_N$,
the family $\{(V^{i}_0,W^{i}_0), i=1,\dots,N\}$ is exchangeable and points (i) and (ii)
hold true. Actually, the inequality in (ii) follows from the fact
that $\cW_2^2(f,g)\leq m_2(f)+m_2(g)$ (choose an independent coupling between $f$ and $g$) and that 
$m_2(N^{-1}\sum_1^N \delta_{V^{i}_0})=m_2(N^{-1}\sum_1^N \delta_{W^{i}_0})=1$ because both $F_N$ and $G_N$ are carried by
$S_N$.

\vip

{\it Step 2.} We consider $((V^{i}_0)_{i=1,\dots,N},(W^{i}_0)_{i=1,\dots,N})\sim H_N$
and some families $(B^{ij}_t)_{1\leq i < j \leq N, t\geq 0}$, $(\tB^i_t)_{i=1,\dots,N,t\geq 0}$, 
$(\hB^i_t)_{i=1,\dots,N,t\geq 0}$
of $3D$ Brownian motions, all these objects being independent. 
For $1\leq j<i\leq N$, we set $B^{ij}_t=-B^{ji}_t$.
We also put $B^{ii}_t=0$ for all $i=1,\dots,N$
and consider the system of SDEs
\begin{align}
V^{i,\e}_t=&V^{i,\e}_0 \!+\! \frac 1 N \sum_{j=1}^N \intot\!\! b(V^{i,\e}_s\!-\!V^{j,\e}_s)ds \!+\!
\frac 1 {\sqrt N} \sum_{j=1}^N \intot \!\!\sigma(V^{i,\e}_s\!-\!V^{j,\e}_s)dB^{ij}_s + \e \tB^i_t,\label{sys1}\\
W^{i,\e}_t=&W^{i,\e}_0 \!+\! \frac 1 N \sum_{j=1}^N \intot\!\! b(W^{i,\e}_s\!-\!W^{j,\e}_s)ds \!+\!
\frac 1 {\sqrt N} \sum_{j=1}^N \intot\!\! A(V^{i,\e}_s\!-\!V^{j,\e}_s, W^{i,\e}_s\!-\!W^{j,\e}_s)dB^{ij}_s + \e \hB^i_t.
\label{sys2}
\end{align}
This is a $6N$-dimensional stochastic differential equation with measurable coefficients
with at most linear growth (because $|b(x)|=2|x|$ and $\|\sigma(x)\|^2=\|A(y,x)\|^2=\Tr\; a(x)=2|x|^2$). 
Thanks to the additional noises, the diffusion matrix is strictly
uniformly elliptic. Consequently, we can apply the result of 
Krylov \cite[p 87]{k} (the coefficients are assumed to be bounded in \cite{k}, but we can use a 
standard localization procedure
or the results of Rozkosz and Slominski \cite{rs}): the system \eqref{sys1}-\eqref{sys2} 
has at least one (weak) solution.

\vip

{\it Step 3.}
We now prove that
\begin{align}\label{jab2}
\frac 1N\sum_1^N|V^{i,\e}_t&-W^{i,\e}_t|^2=\frac1N\sum_1^N|V^{i}_0-W^{i}_0|^2
+6\e^2 t + \frac{2\e }{N}\sum_{i=1}^N \intot  (V^{i,\e}_s-W^{i,\e}_s)\cdot (d\tB^i_s-d\hB^i_s)\\
&- \frac 2{N^2} \sum_{i,j=1}^N \intot [|V^{i,\e}_s-V^{j,\e}_s||W^{i,\e}_s-W^{j,\e}_s| - 
(V^{i,\e}_s-V^{j,\e}_s)\cdot(W^{i,\e}_s-W^{j,\e}_s)]ds.\nonumber
\end{align}
This follows from a direct application of the It\^o formula, together with the equalities
\begin{align*}
I^\e_s:=&\frac{1}{N^2}\sum_{i,j=1}^N \Big(2(V^{i,\e}_s-W^{i,\e}_s)\cdot(b(V^{i,\e}_s-V^{j,\e}_s)- b(W^{i,\e}_s-W^{j,\e}_s))\\
&\hskip4cm + \|\sigma(V^{i,\e}_s-V^{j,\e}_s)- A(V^{i,\e}_s-V^{j,\e}_s, W^{i,\e}_s-W^{j,\e}_s)\|^2  \Big)\\
=& - \frac 2{N^2} \sum_{i,j=1}^N  [|V^{i,\e}_s-V^{j,\e}_s||W^{i,\e}_s-W^{j,\e}_s| - 
(V^{i,\e}_s-V^{j,\e}_s)\cdot(W^{i,\e}_s-W^{j,\e}_s)]
\end{align*}
and
\begin{align*}
J^\e_t :=& \frac{2}{N\sqrt N} \sum_{i,j=1}^N \intot (V^{i,\e}_s-W^{i,\e}_s)\cdot 
\Big(\sigma(V^{i,\e}_s-V^{j,\e}_s)- A(V^{i,\e}_s-V^{j,\e}_s, W^{i,\e}_s-W^{j,\e}_s) \Big) dB^{ij}_s=0
\end{align*}
that we now check.
Using that $B^{ij}_s=-B^{ji}_s$, $\sigma(-x)=\sigma(x)$ and $A(-x,-y)=A(x,y)$, we see that 
\begin{align*}
J^\e_t =& \frac{2}{N\sqrt N} \sum_{i<j} \intot \Big((V^{i,\e}_s-V^{j,\e}_s)-(W^{i,\e}_s-W^{j,\e}_s)\Big)\\
&\hskip4cm \cdot \Big(\sigma(V^{i,\e}_s-V^{j,\e}_s)- A(V^{i,\e}_s-V^{j,\e}_s, W^{i,\e}_s-W^{j,\e}_s) \Big) dB^{ij}_s.
\end{align*}
It then suffices to use that $(v-w)^* (\sigma(v)-A(v,w))=0$
by Lemma \ref{mata}-(c) to conclude that $J^\e_t=0$. 
Using next that $b(-x)=-b(x)$, we write
\begin{align*}
I^\e_s:=&\frac{1}{N^2}\sum_{i,j=1}^N 
((V^{i,\e}_s-V^{j,\e}_s)-(W^{i,\e}_s-W^{j,\e}_s))\cdot(b(V^{i,\e}_s-V^{j,\e}_s)- b(W^{i,\e}_s-W^{j,\e}_s))\\
&\hskip4cm + \|\sigma(V^{i,\e}_s-V^{j,\e}_s)- A(V^{i,\e}_s-V^{j,\e}_s, W^{i,\e}_s-W^{j,\e}_s)\|^2  \Big).
\end{align*}
But $\|\sigma(x)-A(x,y)\|^2=\Tr\;\sigma(x)\sigma^*(x)
+\Tr\;A(x,y)A^*(x,y)-2\Tr\;\sigma(x)A^*(x,y)=2|x|^2+2|y|^2- 2(|x||y|+x\cdot y)$
because $\sigma(x)\sigma^*(x)=a(x)$, $A(x,y)A^*(x,y)=a(y)$ by Lemma \ref{mata}-(a), 
because $\Tr\; a(x)=2|x|^2$, and because $\Tr\;\sigma(x)A^*(x,y)=|x||y|+x\cdot y$ 
by Lemma \ref{mata}-(b). Also, since
$b(x)=-2x$, we have $(x-y)\cdot(b(x)-b(y))=-2|x|^2-2|y|^2+4x\cdot y$.
All in all, $(x-y)\cdot(b(x)-b(y))+\|\sigma(x)-A(x,y)\|^2=-2|x||y|+2x\cdot y$.
This completes the step.

\vip

{\it Step 4.} The coefficients $b,\sigma,A$ have at most linear growth
and the initial conditions are bounded (for $N$ fixed, since $H_N$ is carried by $S_N\times S_N$).
It is thus routine to verify that for all $p\geq 2$, all $T>0$, 
$\sup_{\e\in(0,1)}\E[\sup_{[0,T]}\sum_1^N (|V^{i,\e}_t|^p+|W^{i,\e}_t|^p)]<\infty$ and that the family
$\{(V^{i,\e}_t,W^{i,\e}_t)_{i=1,\dots,N,t\geq 0}, \e\in(0,1)\}$ is tight in $C([0,\infty),(\rd)^{2N})$.
We thus may consider a limit point $(V^{i}_t,W^{i}_t)_{i=1,\dots,N,t\geq 0}$ and we now check that
it satisfies all the requirements of the statement. Exchangeability, as well as points (i) and (ii)
(which concern only the initial conditions) are of course inherited from the fact that
they are satisfied by  $(V^{i,\e}_t,W^{i,\e}_t)_{i=1,\dots,N,t\geq 0}$ for all $\e\in(0,1)$.
Point (iv) is easily obtained by passing to the limit as $\e\to 0$ in \eqref{jab2}
(this uses only that $\sup_{\e\in(0,1)}\E[\sup_{[0,T]}\sum_1^N (|V^{i,\e}_t|^2+|W^{i,\e}_t|^2)]<\infty$).
Since $b,\sigma$ are continuous (and even Lipschitz continuous), it is not hard to pass
to the limit in \eqref{sys1} and to deduce that $(V^{i}_t)_{i=1,\dots,N,t\geq 0}$ is a weak solution to \eqref{ps},
whence $(V^{i}_t)_{i=1,\dots,N,t\geq 0}\sim \bL^N(F_N)$.
Using finally that $A(x,y)A^*(x,y)=a(y)=\sigma(y)\sigma^*(y)$ and that $A(-x,-y)=A(x,y)$,
we deduce that for each $\e\in(0,1)$, \eqref{sys2} can be rewritten
in the same form as \eqref{sys1} (with another family of Brownian motions $B^{ij}$). We thus prove as previously
that $(W^{i}_t)_{i=1,\dots,N,t\geq 0}\sim \bL^N(G_N)$ and this 
completes the proof. Observe that although $(V^{i}_t,W^{i}_t)_{i=1,\dots,N,t\geq 0}$ satisfies all the required
properties, it does not seem possible to check that it solves 
\eqref{sys1}-\eqref{sys2} with $\e=0$.
\end{proof}

\subsection{Rousset's inequality}\label{ssrou}

The following Lemma summarizes a few results found in \cite{r}.

\begin{lem}\label{ethop}
Consider $f,g\in \cP(\rd)$ and $R \in \cH(f,g)$ such that $\intrd |v|^2 f(dv)=\intrd |v|^2 g(dv)=1$,
$\intrd vf(dv)=\intrd vg(dv)=0$ and $\int_{\rd\times\rd} |v-w|^2 R(dv,dw) \leq 2$.
Denote by $\rho(f)$ the spectral radius of $\intrd v v^*f(dv) \in S_3^+$.
Observe that $\rho(f) \in (0,1]$, since $\intrd v v^*f(dv)$ has trace $1$.
For all $q>1$, there is a constant $C_q$ depending only on $q$ such that
$$
\int_{\rd\times\rd}  |v-w|^2 R(dv,dw) \leq C_q (1-\rho(f))^{-1} [m_{2+2q}(f+g)]^{1/q} [D(R)]^{1-1/q}.
$$
where $D(R)=\int_{\rd\times\rd}\int_{\rd\times\rd} [|v-x||w-y|-(v-x)\cdot(w-y)] R(dv,dw)R(dx,dy)$.
\end{lem}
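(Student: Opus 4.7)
The plan is to derive an exact identity expressing $\int|v-w|^2 R$ in terms of $D(R)$ plus a ``norm-difference'' correction, and then to bound that correction by a fractional power of $D(R)$, with the non-degeneracy of $f$ entering through the factor $(1-\rho(f))^{-1}$.

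\emph{Step 1 (an exact identity).} The centering hypotheses give $\int(v-w)R(dv,dw)=0$, so viewing $v-w$ and $x-y$ as i.i.d. copies under $R\otimes R$, a direct variance computation yields
$$\int\int|(v-w)-(x-y)|^2 R(dv,dw)R(dx,dy) = 2\int|v-w|^2 R(dv,dw).$$
Rewriting $(v-w)-(x-y)=(v-x)-(w-y)$ and applying the elementary identity $|a-b|^2=(|a|-|b|)^2+2(|a||b|-a\cdot b)$ inside the double integral, this yields the central identity
$$2\int|v-w|^2 R = \int\int(|v-x|-|w-y|)^2 R(dv,dw)R(dx,dy) + 2D(R). \qquad(\star)$$
Everything therefore reduces to bounding the first right-hand term of $(\star)$ by a fractional power of $D(R)$.

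\emph{Step 2 (controlling the norm-difference term).} This is the crux, and I would follow Rousset \cite{r}. Heuristically, $D(R)$ small forces $(v-x)$ and $(w-y)$ to be $R\otimes R$-almost surely positively proportional, and when combined with $\rho(f)<1$ (meaning $f$ is not concentrated on any line through the origin) this rigidity propagates to $R$ being concentrated on the diagonal. Quantitatively, I would split $(|v-x|-|w-y|)^2=[(|v-x|-|w-y|)^2]^{1/q}\cdot[(|v-x|-|w-y|)^2]^{1-1/q}$ and apply H\"older with exponents $q$ and $q/(q-1)$: the ``large'' factor is controlled in $L^1(R\otimes R)$ by $[m_{2+2q}(f+g)]^{1/q}$ via the triangle inequality $||v-x|-|w-y||\leq|v-x|+|w-y|$, while the ``small'' factor is compared to the wedge-type quantity $|v-x||w-y|-(v-x)\cdot(w-y)$ through an averaging argument (integrating over $R(dx,dy)$ with $(v,w)$ fixed) that converts direction-alignment into norm-equality. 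This last step invokes the spectral lower bound $u^*(\Id-\int vv^* f(dv))u\geq 1-\rho(f)$ valid for every unit vector $u\in\rd$, which is precisely what produces the factor $(1-\rho(f))^{-1}$. The net output is
$$\int\int(|v-x|-|w-y|)^2 R(dv,dw)R(dx,dy) \leq C_q (1-\rho(f))^{-1} [m_{2+2q}(f+g)]^{1/q} D(R)^{1-1/q}.$$

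\emph{Step 3 (conclusion).} Substituting into $(\star)$ gives
$$\int|v-w|^2 R \leq D(R) + \tfrac12 C_q (1-\rho(f))^{-1} [m_{2+2q}(f+g)]^{1/q} D(R)^{1-1/q}.$$
The residual term $D(R)$ is absorbed into the other one by writing $D(R)=D(R)^{1/q}\cdot D(R)^{1-1/q}$ and using the a priori bound $D(R)\leq\int\int|v-x||w-y|R\otimes R\leq 2$ (from Cauchy-Schwarz together with $\int|v-x|^2 R\otimes R=\int|w-y|^2 R\otimes R=2$), combined with $m_{2+2q}(f+g)\geq 2$ (by Jensen applied to $m_2(f)=m_2(g)=1$). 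The main obstacle is Step 2: the identity $(\star)$ is clean, but converting wedge-type smallness of $D(R)$ into control of the norm-difference term is delicate and is exactly where the non-degeneracy assumption $\rho(f)<1$ becomes indispensable.
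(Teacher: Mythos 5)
There is a genuine gap, and it sits exactly where you flag it: Step 2. Your identity $(\star)$ in Step 1 is correct (and a nice reformulation), and the absorption in Step 3 is fine up to a harmless slip (the wedge integrand is only bounded by $2|v-x||w-y|$, so $D(R)\leq 4$ rather than $2$; this changes nothing). But the inequality you assert in Step 2,
$$
\int\!\!\int(|v-x|-|w-y|)^2\,R(dv,dw)R(dx,dy)\;\leq\; C_q\,(1-\rho(f))^{-1}\,[m_{2+2q}(f+g)]^{1/q}\,[D(R)]^{1-1/q},
$$
is, via $(\star)$, equivalent in strength to the lemma itself (it follows from the lemma, and conversely implies it), and you give no proof of it -- only a heuristic appeal to Rousset and to an unspecified ``averaging argument''. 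Moreover, the mechanism you sketch cannot work as described: there is no pointwise or fibrewise comparison between $(|v-x|-|w-y|)^2$ and the wedge quantity $|v-x||w-y|-(v-x)\cdot(w-y)$, since the latter vanishes whenever $v-x$ and $w-y$ are positively aligned, whatever their lengths; any passage from directional alignment to norm equality must exploit the joint law $R\otimes R$ and the marginal constraints globally, and that global argument is precisely the missing content.

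For comparison, the paper does not use $(\star)$ at all. It first notes that, under the hypotheses, $\int|v-w|^2R=2-2\int(v\cdot w)R\leq 2-2\bigl(\int(v\cdot w)R\bigr)^2$, and then invokes a separate spectral inequality (Lemma \ref{specin}, Rousset's Theorem 1.4): for the \emph{squared} wedge functional $D'(R)=\int\!\int[|v-x|^2|w-y|^2-((v-x)\cdot(w-y))^2]R\otimes R$ one has $D'(R)\geq 2(1-\rho(f))\bigl(1-[\int(v\cdot w)R]^2\bigr)$, proved by expanding $D'(R)$ for two independent copies of $R$ and estimating the cross-covariance terms against the covariance matrix of $f$; this is where $(1-\rho(f))^{-1}$ enters. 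The conclusion then follows from the pointwise interpolation $|X|^2|Y|^2-(X\cdot Y)^2\leq[|X||Y|-X\cdot Y]^{1-1/q}[2|X||Y|]^{1+1/q}$ and H\"older, giving $D'(R)\leq D(R)^{1-1/q}K_q(R)^{1/q}$ with $K_q(R)\leq C_q\,m_{2q+2}(f+g)$. To repair your proof you would need to supply an analogue of this spectral step; as written, the crux is asserted rather than proved.
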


We start with the following Lemma \cite[Theorem 1.4]{r}, of which we sketch the proof
for completeness.

\begin{lem}\label{specin}
Consider $f,g\in \cP(\rd)$ and $R \in \cH(f,g)$ such that $\intrd |v|^2 f(dv)=\intrd |v|^2 g(dv)=1$ and
$\intrd vf(dv)=\intrd vg(dv)=0$. For $\rho(f)$ the spectral radius of $\intrd v v^*f(dv)$, it holds that
\begin{align*}
D'(R) \geq& 2(1-\rho(f)) \Big(1-\Big[\int_{\rd\times\rd} (v\cdot w) R(dv,dw)\Big]^2 \Big).
\end{align*}
where $D'(R):=\int_{\rd\times\rd}\int_{\rd\times\rd} [|v-x|^2|w-y|^2-((v-x)\cdot(w-y))^2] R(dv,dw)R(dx,dy)$.
\end{lem}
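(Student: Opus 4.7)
The plan is to exploit the 3D identity $|X|^2|Y|^2 - (X\cdot Y)^2 = |X\times Y|^2$. Letting $(V,W)$ and $(V',W')$ be independent pairs with law $R$, this rewrites $D'(R) = \E[|(V-V')\times(W-W')|^2]$. First I will expand the cross product by bilinearity and use (a) the independence relations under $R\otimes R$, (b) the centering $\int v\, f(dv) = \int w\, g(dw) = 0$, and (c) the vector identity $(X\times Y)\cdot(X'\times Y') = (X\cdot X')(Y\cdot Y') - (X\cdot Y')(X'\cdot Y)$. Setting $A = \int vv^* f(dv)$, $B = \int ww^* g(dw)$, $M = \int vw^* R(dv,dw)$ and $r = \Tr M$, careful bookkeeping will yield
$$ D'(R) = 2\E|V\times W|^2 + 2|\E(V\times W)|^2 + 2\bigl(1 - \Tr(AB)\bigr) + 2\bigl(\|M\|^2 - r^2\bigr). $$
Centering kills the mixed cross terms such as $\E[(V\times W)\cdot(V\times W')]$, while (c) converts the two remaining cross terms, $\E[(V\times W')\cdot(V'\times W)]$ and $\E[(V\times W)\cdot(V'\times W')]$, into $\|M\|^2 - r^2$ and $|\E(V\times W)|^2$ respectively.

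Dropping the first two (nonnegative) terms, the problem reduces to the algebraic inequality $\Tr(AB) - \|M\|^2 \leq \rho(f)(1 - r^2)$; indeed, the desired bound then follows from $2 - 2\rho(f)(1-r^2) - 2r^2 = 2(1-\rho(f))(1-r^2)$. To prove this inequality I will work in an orthonormal eigenbasis of $A$, writing $A = \mathrm{diag}(a_1,a_2,a_3)$ with $a_1+a_2+a_3 = 1$ and $\rho(f) = \max_i a_i$, so that $\Tr(AB) = \sum_i a_i B_{ii}$. Cauchy--Schwarz applied to $M_{ii} = \int v_i w_i R(dv,dw)$ gives $M_{ii}^2 \leq a_i B_{ii}$; writing $M_{ii} = \rho_i\sqrt{a_i B_{ii}}$ with $|\rho_i|\leq 1$ and using the trivial bound $\|M\|^2 \geq \sum_i M_{ii}^2$, one gets $\Tr(AB) - \|M\|^2 \leq \sum_i a_i B_{ii}(1-\rho_i^2) \leq \rho(f)\bigl[1 - \sum_i \rho_i^2 B_{ii}\bigr]$. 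A final Cauchy--Schwarz, $r^2 = \bigl(\sum_i \sqrt{a_i}\,\rho_i\sqrt{B_{ii}}\bigr)^2 \leq \bigl(\sum_i a_i\bigr)\bigl(\sum_i \rho_i^2 B_{ii}\bigr) = \sum_i \rho_i^2 B_{ii}$, closes the argument.

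The main obstacle will be the bookkeeping in the first expansion: many cross terms appear and it is the identity in (c), combined with the specific independence structure under $R\otimes R$, that collapses them into the clean package above. Once that identity is established, proving the algebraic inequality is a routine two-step Cauchy--Schwarz in the eigenbasis of $A$.
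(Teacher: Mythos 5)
Your proof is correct and is essentially the paper's own argument in different clothing: your four-term identity coincides term by term with the paper's decomposition $D'(R)=2(A+B+C+D)$ (your $2\E|V\times W|^2$ and $2|\E(V\times W)|^2$ are its nonnegative terms $2A$ and $2D$, since $|\E(V\times W)|^2=\|M\|^2-\Tr(M^2)$), and your key inequality $\Tr(AB)-\|M\|^2\leq \rho(f)(1-r^2)$, proved in the eigenbasis of $A$ by dropping the off-diagonal entries of $M$ and applying Cauchy--Schwarz twice, is exactly the paper's bound $-C\leq \rho(f)(1-\E[U\cdot V]^2)$. The cross-product phrasing is a pleasant (3D-specific) repackaging that makes the nonnegativity of the two dropped terms transparent, but the route is the same.
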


\begin{proof}
Consider two independent couples $(U,V)$ and $(\tU,\tV)$ with law $R$. Using that
$\E[U]=\E[V]=0$ and $\E[|U|^2]=\E[|V|^2]=1$, a straightforward tedious computation shows that 
$$
D'(R)=\E[|U-\tU|^2|V-\tV|^2-[(U-\tU)\cdot(V-\tV)]^2]=2(A+B+C+D),
$$ 
where
$A= \E[|U|^2|V|^2-(U\cdot V)^2]$, $B=1-\E[U\cdot V]^2$, $C=\E[(U\cdot \tU)(V\cdot\tV)]-\E[(U\cdot \tV)^2]$
and $D=\E[(U\cdot \tU)(V\cdot\tV)]-\E[(U\cdot \tV)(\tU\cdot V)]$.
Clearly, $A\geq 0$ and it is not hard to verify that $D=\sum_{k,l=1}^3 (\E[U_kV_l]^2-\E[U_kV_l]\E[U_lV_k]) \geq 0$.
Next, $C=\sum_{k,l=1}^3  (\E[U_kV_l]^2-\E[U_kU_l]\E[V_kV_l])$. 
Working in an orthonormal basis in which 
$(\E[U_k U_l])_{k,l}$ is diagonal and using that $\rho(f)=\max_{k}\E[U_k^2]$,
$$
-C\leq \sum_{k=1}^3 (\E[U_k^2]\E[V_k^2]-\E[U_kV_k]^2) \leq \rho(f)  \sum_{k=1}^3 \Big(\E[V_k^2]-\frac{\E[U_kV_k]^2}
{\E[U_k^2]}\Big) =  \rho(f)\Big(1- \sum_{k=1}^3\frac{\E[U_kV_k]^2}
{\E[U_k^2]}\Big)
$$ 
because $\E[|U|^2]=1$. But by Cauchy-Scwharz's inequality (and since, again, $\E[|U|^2]=1$),
$$
\E[U\cdot V]^2=\Big(\sum_{k=1}^3 \E[U_kV_k]\Big)^2 \leq \Big(\sum_{k=1}^3\frac{\E[U_kV_k]^2}{\E[U_k^2]}\Big)
\Big(\sum_{k=1}^3 \E[U_k^2]\Big)=\sum_{k=1}^3\frac{\E[U_kV_k]^2}{\E[U_k^2]}.
$$
Finally, $-C \leq \rho(f)(1-\E[U\cdot V]^2)$ and $D'(R) \geq 2B+2C\geq 2(1-\rho(f))(1-\E[U\cdot V]^2)$.
\end{proof}

\begin{proof}[Proof of Lemma \ref{ethop}]
Using $\intrd |v|^2 f(dv)=\intrd |v|^2 g(dv)=1$ and
$\int_{\rd\times\rd} |v-w|^2 R(dv,dw) \leq 2$, we deduce that $0\leq \int_{\rd\times\rd} (v\cdot w) R(dv,dw)\leq 1$
and then that 
$$
\int_{\rd\times\rd} |v-w|^2 R(dv,dw)=2-2\int_{\rd\times\rd} (v\cdot w) R(dv,dw)
\leq 2-2\Big(\int_{\rd\times\rd} (v\cdot w) R(dv,dw)\Big)^2.
$$
Applying next Lemma \ref{specin}, we find
$$
\int_{\rd\times\rd} |v-w|^2 R(dv,dw) \leq  (1-\rho(f))^{-1} D'(R).
$$
Using that (recall that $q>1$)
$$
|X|^2|Y|^2-(X\cdot Y)^2\leq [|X||Y|-(X\cdot Y)][2|X||Y|]\leq
[|X||Y|-(X\cdot Y)]^{1-1/q}[2|X||Y|]^{1+1/q}
$$
and the H\"older inequality, we see that $D'(R)\leq D(R)^{1-1/q} (K_q(R))^{1/q}$, where we have set
$K_q(R)=\int_{\rd\times\rd}\int_{\rd\times\rd}  [2|v-x||w-y|]^{q+1}R(dv,dw)R(dx,dy)$. To conclude,
it suffices to observe that $K_q(R) \leq C_q m_{2q+2}(f+g)$, which immediately follows
from the fact that $R \in \cH(f,g)$.
\end{proof}

\subsection{Conclusion}\label{ssconc}

We can now give the

\begin{proof}[Proof of Theorem \ref{rouss}.]
We fix $N\geq 7$, $p\in [0, (N-6)/2]$ and some exchangeable 
$F_N \in \cP(S_N)$. We put $q=p+1$. We apply Lemma \ref{coup} with $G_N=\cU(S_N)$
to build a coupling $(V^{i,N}_t,W^{i,N}_t)_{i=1,\dots,N,t\geq 0}$ between $\bL^N(F_N)$ and $\bL^N(\cU(S_N))$.
We introduce the notation $U^N_t=N^{-1}\sum_1^N |V^{i,N}_t-W^{i,N}_t|^2$ and $u^N_t=\E[U^N_t]$. We also set
$\mu^N_t=N^{-1}\sum_1^N \delta_{V^{i,N}_t}$, $\nu^N_t=N^{-1}\sum_1^N \delta_{W^{i,N}_t}$,
as well as $\zeta^N_t=N^{-1}\sum_1^N \delta_{(V^{i,N}_t,W^{i,N}_t)}$.
Lemma \ref{coup}-(iv) precisely says that $U^N_t=U^N_0-2\intot D(\zeta^N_s)ds$,
with $D$ defined in Lemma \ref{ethop}. 
Since $U^N_0\leq 2$ by Lemma \ref{coup}-(ii),
we deduce that a.s., $U^N_t\leq 2$ for all $t\geq 0$. 

\vip

We now apply Lemma \ref{ethop} with $R=\zeta^N_t \in \cH(\mu^N_t,\nu^N_t)$,
which is licit since $\intrd v \mu^N_t(dv)=\intrd v \nu^N_t(dv)=0$ and $\intrd |v|^2 \mu^N_t(dv)=
\intrd |v|^2 \nu^N_t(dv)=1$ (because both $F_N$ and $G_N$ are carried by $S_N$) and since
$U^N_t= \int_{\rd\times\rd} |v-w|^2 \zeta^N_t(dv,dw)\leq 2$: we deduce that
$$
U^N_t\leq C_q (1-\rho(\nu^N_t))^{-1} [m_{2+2q}(\mu^N_t+\nu^N_t)]^{1/q} [D(\zeta^N_t)]^{1-1/q}.
$$
Taking expectations and using the triple H\"older inequality (with $2q$, $2q$ and $q/(q-1)$),
\begin{align*}
u^N_t \leq C_q \E[(1-\rho(\nu^N_t))^{-2q}]^{1/(2q)}\E[(m_{2+2q}(\mu^N_t+\nu^N_t))^2]^{1/(2q)}\E[D(\zeta^N_t)]^{1-1/q}.
\end{align*}
Since $(W^{i,N}_t)_{i=1,\dots,N}\sim\cU(S_N)$ for each $t\geq 0$ by Remark \ref{equi}, 
we infer from Lemma \ref{ums}-(ii)
that $\E[(m_{2+2q}(\nu^N_t))^2]\leq \E[|W^{1,N}_t|^{4q+4}]\leq C_q$ and from  Lemma \ref{ums}-(iii),
since $\rho(\nu^N_t)$ is the spectral radius of $\intrd vv^* \nu^N_t(dv)=N^{-1}\sum_1^N W^{i,N}_t(W^{i,N}_t)^*$
and since $2q \leq N-4$, that
$\E[(1-\rho(\nu^N_t))^{-2q}]\leq C_q$. Also, we know from Proposition \ref{mps}
that $\E[(m_{2+2q}(\mu^N_t))^2]\leq \E[|V^{1,N}_t|^{4q+4}] \leq C_q\E[|V^{1,N}_0|^{4q+4}]$.
We end with
$$
u^N_t \leq C_q \E[1+|V^{1,N}_0|^{4q+4}]^{1/(2q)}\E[D(\zeta^N_t)]^{1-1/q}.
$$
Recalling that $U^N_t=U^N_0-2\intot D(\zeta^N_s)ds$, we conclude that, for some $c_q>0$
depending only on $q$,
$$
\frac d{dt}u^N_t = -2\E[D(\zeta^N_t)] \leq -c_q \E[1+|V^{1,N}_0|^{4q+4}]^{-1/(2(q-1))} (u^N_t)^{q/(q-1)}.
$$
Integrating this inequality, we find, recalling that $p=q-1$ and setting $\kappa_p=c_q/(q-1)$,
$$
u^N_t \leq \Big(\kappa_p \E[1+|V^{1,N}_0|^{8+4p}]^{-1/(2p)} t +  (u^N_0)^{-1/p}\Big)^{-p}.
$$
By construction, since $\bL^N_t(\cU(S_N))=\cU(S_N)$ for all $t\geq 0$, 
we have $N^{-1}\cW_2^2(\bL^N_t(F_N),\cU(S_N))\leq u^N_t$ and, by Lemma
\ref{coup}-(i)-(ii), $u^N_0=N^{-1}\cW_2^2(F_N,\cU(S_N))\leq 2$. We conclude that
$$
N^{-1}\cW_2^2(\bL^N_t(F_N),\cU(S_N)) \leq \Big(\kappa_p \E[1+|V^{1,N}_0|^{8+4p}]^{-1/(2p)} t +  
(N^{-1}\cW_2^2(F_N,\cU(S_N)))^{-1/p}\Big)^{-p}.
$$
On the onde hand, this implies that $N^{-1}\cW_2^2(\bL^N_t(F_N),\cU(S_N)) \leq N^{-1}\cW_2^2(F_N,\cU(S_N)$.
On the other hand, this gives  $N^{-1}\cW_2^2(\bL^N_t(F_N),\cU(S_N))\leq 
(\kappa_p \E[1+|V^{1,N}_0|^{8+4p}]^{-1/(2p)} t +  2^{-1/p})^{-p}$, which is controlled by
$C_p \E[1+|V^{1,N}_0|^{8+4p}]^{1/2}(1+t)^{-p}$.
\end{proof}

\subsection{Uniform propagation of chaos}\label{ssff}
We start with a consequence of Theorem \ref{rouss}.

\begin{cor}\label{corrouss}
Assume that $\gamma=0$, fix $N\geq 2$ and consider some exchangeable $S_N$-valued initial
condition $(V^{i,N}_0)_{i=1,\dots,N}$, the corresponding solution $(V^{i,N}_t)_{i=1,\dots,N,t\geq 0}$ to \eqref{ps}
and set $\mu^N_t=N^{-1}\sum_1^N\delta_{V^{i,N}_t}$, 
For all $p>0$, there is a constant $C_p$ depending only on $p$ such that for all $t\geq 0$,
\begin{align*}
\E[\cW_2^2(\mu^N_t,\cN(0,3^{-1}\Id))] \leq C_p \Big( N^{-1/2}+ \E[1+|V^{1,N}_0|^{8+4p}]^{1/2}  (1+t)^{-p}\Big).
\end{align*}
\end{cor}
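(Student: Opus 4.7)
The plan is a two-step triangle inequality: first transfer $\mu^N_t$ to the empirical measure of a $\cU(\cS_N)$-distributed family using Theorem \ref{rouss}, and then transfer this latter empirical measure to $\cN(0,3^{-1}\Id)$ using Lemma \ref{ums}-(i).

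I would first assume $N\geq 6+2p$, so that Theorem \ref{rouss} applies with exponent $p$. Let $F_N$ denote the law of $(V^{i,N}_0)_{i=1,\dots,N}$, which is carried by $\cS_N$ and exchangeable. Using an optimal coupling for $\cW_2$ on $(\rd)^N$, I would construct on some enlarged probability space a $\cU(\cS_N)$-distributed exchangeable family $(Y^{i,N})_{i=1,\dots,N}$ such that
\begin{equation*}
\E\Big[\sum_{i=1}^N |V^{i,N}_t-Y^{i,N}|^2\Big]=\cW_2^2(\bL^N_t(F_N),\cU(\cS_N)).
\end{equation*}
Setting $\rho^N=N^{-1}\sum_1^N\delta_{Y^{i,N}}$, the bound $\cW_2^2(\mu^N_t,\rho^N)\leq N^{-1}\sum_1^N|V^{i,N}_t-Y^{i,N}|^2$ combined with Theorem \ref{rouss} then gives
\begin{equation*}
\E[\cW_2^2(\mu^N_t,\rho^N)]\leq \frac 1 N \cW_2^2(\bL^N_t(F_N),\cU(\cS_N))\leq \frac{C_p \E[1+|V^{1,N}_0|^{8+4p}]^{1/2}}{(1+t)^{p}}.
\end{equation*}

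Next, since $(Y^{i,N})_{i=1,\dots,N}\sim\cU(\cS_N)$, Lemma \ref{ums}-(i) yields
\begin{equation*}
\E[\cW_2^2(\rho^N,\cN(0,3^{-1}\Id))]\leq C N^{-1/2}.
\end{equation*}
The elementary inequality $\cW_2^2(f,h)\leq 2\cW_2^2(f,g)+2\cW_2^2(g,h)$ applied to $f=\mu^N_t$, $g=\rho^N$, $h=\cN(0,3^{-1}\Id)$, followed by taking expectations, concludes the proof for $N\geq 6+2p$.

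For the remaining finitely many values $N\in\{2,\dots,5+2p\}$, I would argue crudely: since $(V^{i,N}_0)\in\cS_N$ a.s., $m_2(\mu^N_t)=1$ a.s.\ by conservation of energy (Proposition \ref{pswp}), so
\begin{equation*}
\E[\cW_2^2(\mu^N_t,\cN(0,3^{-1}\Id))]\leq 2m_2(\cN(0,3^{-1}\Id))+2\E[m_2(\mu^N_t)]=4,
\end{equation*}
which is absorbed into the $C_p N^{-1/2}$ term by enlarging $C_p$ if necessary. I do not anticipate any serious obstacle: all technical work has been done in Theorem \ref{rouss} and Lemma \ref{ums}, and the only mild point is ensuring the finite-$N$ regime is handled, which is essentially trivial.
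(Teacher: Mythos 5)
Your proposal is correct and follows essentially the same route as the paper: a coupling of $(V^{i,N}_t)$ with a $\cU(\cS_N)$-distributed family provided by Theorem \ref{rouss}, combined with Lemma \ref{ums}-(i) and the triangle inequality for $N\geq 6+2p$, and a crude energy-conservation bound (the paper gets $2$, you get $4$, both absorbed into $C_pN^{-1/2}$) for the finitely many small $N$.
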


\begin{proof}
Let $p>0$ be fixed. If first $N-6<2p$, then we simply use that
$\cW_2^2(\mu^N_t,\cN(0,3^{-1}\Id))\leq 2$ a.s., so that the inequality obviously holds with
$C_p= 2\sqrt{2p+6}$.

\vip

If next $N-6\geq 2p$, we use Theorem \ref{rouss}: for all $t\geq 0$, 
there is $(X^{i,N}_t)_{i=1,\dots,N}\sim\cU(S_N)$
such that $N^{-1}\sum_1^N\E[|V^{i,N}_t-X^{i,N}_t|^2] \leq C_p \E[1+|V^{1,N}_0|^{8+4p}]^{1/2}(1+t)^{-p}$. 
We now put $\nu^N_t=N^{-1}\sum_1^N
\delta_{X^{i,N}_t}$ and we know from Lemma \ref{ums} that $\E[\cW_2^2(\nu^N_t,\cN(0,3^{-1}\Id))] \leq CN^{-1/2}$.
But $\cW_2^2(\mu^N_t,\nu^N_t)\leq N^{-1}\sum_1^N|V^{i,N}_t-X^{i,N}_t|^2$,
whence $\E[\cW_2^2(\mu^N_t,\nu^N_t)]\leq C_p \E[1+|V^{1,N}_0|^{8+4p}]^{1/2}(1+t)^{-p}$.
\end{proof}

We finally give the

\begin{proof}[Proof of Theorem \ref{mr2}-(i)]
Recall that $\gamma=0$, that $f_0\in\cP_2(\rd)$
and that $(f_t)_{t\geq 0}$ is the unique weak solution to \eqref{HL3D}. We assume without loss of
generality that $\intrd v f_0(dv)=0$ and that $m_2(f_0)=1$. 
For each $N\geq 2$, we consider an exchangeable $(\rd)^N$-valued random variable $(V^{i,N}_0)_{i=1,\dots,N}$
and the solution $(V_t^{i,N})_{i=1,\dots,N,t\geq 0}$ to \eqref{ps}. We set 
$\mu^N_t=N^{-1}\sum_1^N \delta_{V^{i,N}_t}$. We assume that for all $p\geq 2$,
$M_p:=m_p(f_0)+\sup_{N\geq 2}\E[|V^{1,N}_0|^p]<\infty$.
The constants are allowed to depend on upperbounds of $\{M_p,p\geq 2\}$
and on some upperbound of $H(f_0)$ when it is finite.
We fix $\eta\in(0,1/5)$.

\vip

{\it Step 1.} By Theorem \ref{maxweak}, we have
\vip
(i) $\E[\cW_2^2(\mu^N_t,f_t)]\leq C_\eta(1+t)^{5/2}(\E[\cW_2^2(\mu^N_0,f_0)]+N^{\eta-1/4})$ in general;
\vip
(ii) $\E[\cW_2^2(\mu^N_t,f_t)]\leq C_\eta(1+t)^{5/2}(\E[\cW_2^2(\mu^N_0,f_0)]+N^{\eta-1/3})$
if $H(f_0)<\infty$.

\vip

{\it Step 2.} Here we verify that for any $p>0$, 
$$
\E[\cW_2^2(\mu^N_t,\cN(0,3^{-1}\Id))]\leq C_p (1+t)^{-p} + C_p N^{-1/2} + C \E[\cW_2^2(\mu^N_0,f_0)].
$$
We put $m^N_0=N^{-1}\sum_1^N V^{i,N}_0$ and $E^N_0=N^{-1}\sum_1^N |V^{i,N}_0-m^N_0|^2$.
On the event $\Omega_N=\{E^N_0 \geq 1/4\}$, we set
$\hat V^{i,N}_t=(V^{i,N}_t-m^N_0)/\sqrt{E^N_0}$ and $\hat\mu^N_t=N^{-1}\sum_1^N \delta_{\hat V^{i,N}_t}$.
Conditionally on $\Omega_N$, $(\hat V^{i,N}_0)_{i=1,\dots,N}$ is exchangeable and takes values in $S_N$,
so that we can apply Corollary \ref{corrouss}: 
$$
\E[\indiq_{\Omega_N}\cW_2^2(\hat\mu^N_t,\cN(0,3^{-1}\Id))] \leq \frac {C_p} {\sqrt N}+
\frac{C_p\E[1+\indiq_{\Omega_N}|\hat V^{1,N}_0|^{8+4p}]^{1/2}}{(1+t)^{p}}\leq  \frac {C_p} {\sqrt N}+ \frac{C_p}{(1+t)^{p}}.
$$
For the last inequality, we used that $|\hat V^{1,N}_0|\leq 4|V^{1,N}_0|+4|m^N_0|$ on $\Omega_N$, whence
$\E[\indiq_{\Omega_N}|\hat V^{1,N}_0|^{8+4p}]\leq C_p\E[|V^{1,N}_0|^{8p+4}]
+C_p\E[|m^N_0|^{8p+4}]\leq C_p \E[|V^{1,N}_0|^{8p+4}]$, which is bounded by assumption.

\vip

Next, we write 
$$
\cW_2^2(\hat \mu^N_t,\mu^N_t)\!\leq\! \frac1N\sum_1^N |V^{i,N}_0-\hat V^{i,N}_0|^2 \!=\!
\frac1N\sum_1^N \Big|(V^{i,N}_t\!\!-m^N_0)\frac{\sqrt{E^N_0}-1}{\sqrt{E^N_0}} + m^N_0\Big|^2\!
=\! \Big(\sqrt{E^N_0}-1\Big)^2
+ |m^N_0|^2.
$$
By Remark \ref{rkd}, we have  $(\sqrt{E^N_0}-1)^2+ |m^N_0|^2\leq\cW_2^2(\mu^N_0,f_0)$,
because $m_{\mu^N_0}=m^N_0$, $V_{\mu^N_0}=E^N_0$, $m_{f_0}=0$ and $V_{f_0}=1$.
At this point, we have proved that 
$$
\E[\indiq_{\Omega_N}\cW_2^2(\mu^N_t,\cN(0,3^{-1}\Id))] \leq
C_p N^{-1/2} + C_p(1+t)^{-p} + 2\E[\cW_2^2(\mu^N_0,f_0)].
$$

We next observe that 
$\E[\indiq_{\Omega_N^c}\cW_2^2(\mu^N_t,\cN(0,3^{-1}\Id))] \leq \E[\indiq_{\{E^N_0<1/4\}} m_2(\mu^N_t+\cN(0,3^{-1}\Id))]
=\E[\indiq_{\{E^N_0<1/4\}} (E^N_0+|m^N_0|^2+1)] \leq (5/4) \Pr(E^N_0<1/2)+\E[|m^N_0|^2]$.
But, $\Pr(E^N_0<1/4)\leq \Pr(|\sqrt{E^N_0}-1|>1/2)
\leq 4\E[|\sqrt{E^N_0}-1|^2]$, and we have checked that $\E[\indiq_{\Omega_N^c}\cW_2^2(\mu^N_t,\cN(0,3^{-1}\Id))]
\leq 5 \E[|\sqrt{E^N_0}-1|^2]+\E[|m^N_0|^2]$, which is controlled by  $5\E[\cW_2^2(\mu^N_0,f_0)]$
as seen a few lines above.

\vip

{\it Step 3.} We deduce that $\cW_2^2(f_t,\cN(0,3^{-1}\Id)) \leq C_p (1+t)^{-p}$:
assume (only during this step) that $(V^{i,N}_0)_{i=1,\dots,N}$ consists of i.i.d. $f_0$-distributed random variables,
so that $\E[\cW_2^2(\mu^N_0,f_0)] \leq C N^{-1/2}$ by \cite[Theorem 1]{fgui}. Write
$\cW_2^2(f_t,\cN(0,3^{-1}\Id))\leq 2 \E[\cW_2^2(f_t,\mu^N_t)]+2\E[ \cW_2^2(\mu^N_t,\cN(0,3^{-1}\Id))]
\leq C_\eta(1+t)^{5/2}( N^{-1/2}+N^{\eta-1/4}) + C_p (1+t)^{-p} + C N^{-1/2}$ by Steps 1 and 2. 
It then suffices to let $N\to\infty$.

\vip

{\it Step 4.} We now conclude the proof in the general case.

\vip

If $(1+t)^{5/2}\leq (\E[\cW_2^2(\mu^N_0,f_0)]+N^{-1/4})^{-\eta}$,
then we use Step 1-(i) to write $\E[\cW_2^2(\mu^N_t,f_t)]\leq C_\eta(\E[\cW_2^2(\mu^N_0,f_0)]+N^{-1/4})^{-\eta}
(\E[\cW_2^2(\mu^N_0,f_0)]+N^{\eta-1/4})\leq C_\eta (\E[\cW_2^2(\mu^N_0,f_0)]+N^{-1/4})^{1-5\eta}$.

\vip

If now  $(1+t)^{5/2}> (\E[\cW_2^2(\mu^N_0,f_0)]+N^{-1/4})^{-\eta}$,
then we use Steps 2 and 3 with $p=5/(2\eta)$ to write 
$\E[\cW_2^2(\mu^N_t,f_t)]\leq C_\eta(1+t)^{-5/(2\eta)}+ C_\eta N^{-1/2}+ C\E[\cW_2^2(\mu^N_0,f_0)]$.
But $(1+t)^{-5/(2\eta)} \leq \E[\cW_2^2(\mu^N_0,f_0)]+N^{-1/4}$ and we end with 
$\E[\cW_2^2(\mu^N_t,f_t)]\leq C_\eta (\E[\cW_2^2(\mu^N_0,f_0)]+N^{-1/4})$.

\vip

Thus $\sup_{[0,\infty)}\E[\cW_2^2(\mu^N_t,f_t)]\leq 
C_\eta (\E[\cW_2^2(\mu^N_0,f_0)]+N^{-1/4})^{1-5\eta}$ as desired.

\vip

{\it Step 5.} We finally conclude when $H(f_0)<\infty$.

\vip

If $(1+t)^{5/2}\leq (\E[\cW_2^2(\mu^N_0,f_0)]+N^{-1/3})^{-\eta}$,
then we use Step 1 to write $\E[\cW_2^2(\mu^N_t,f_t)]\leq C_\eta(\E[\cW_2^2(\mu^N_0,f_0)]+N^{-1/3})^{-\eta}
(\E[\cW_2^2(\mu^N_0,f_0)]+N^{\eta-1/3})\leq C_\eta (\E[\cW_2^2(\mu^N_0,f_0)]+N^{-1/3})^{1-4\eta}$.

\vip

If now  $(1+t)^{5/2}> (\E[\cW_2^2(\mu^N_0,f_0)]+N^{-1/3})^{-\eta}$,
then we use Steps 2 and 3 with $p=5/(2\eta)$ to write 
$\E[\cW_2^2(\mu^N_t,f_t)]\leq C_\eta(1+t)^{-5/(2\eta)}+ C_\eta N^{-1/2}+ C\E[\cW_2^2(\mu^N_0,f_0)]$.
But $(1+t)^{-5/(2\eta)} \leq \E[\cW_2^2(\mu^N_0,f_0)]+N^{-1/3}$ and we end with
$\E[\cW_2^2(\mu^N_t,f_t)]\leq C_\eta (\E[\cW_2^2(\mu^N_0,f_0)]+N^{-1/3})$.

\vip

We conclude that $\sup_{[0,\infty)}\E[\cW_2^2(\mu^N_t,f_t)]\leq 
C_\eta (\E[\cW_2^2(\mu^N_0,f_0)]+N^{-1/3})^{1-4\eta}$ as desired.
\end{proof}

\end{document}